\documentclass[12pt,letterpaper]{article}

\usepackage{renstyles}
\usepackage{cite}

\graphicspath{{./}{../}{../Figures/}{./Figures}}

\newcommand{\Sd}{{\bbS^{d-1}}}           
\newcommand{\nud}{\nu_d}                 
\newcommand{\omxy}{\omega_{\bx\by}}      
          
\newcommand{\Urom}{\widehat{U}}          
\newcommand{\Rb}{\cR}                    
\newcommand{\Keff}{K_{\mathrm{eff}}}     
\newcommand{\Kefff}{K_{\mathrm{eff}}'}  
\newcommand{\sgmsip}[2]{\langle #1,#2\rangle_{\sigma_s}}  
\newcommand{\sgmsnorm}[1]{\|#1\|_{\sigma_s}}

\title{Data-driven reduced-order models for the radiative transfer equation}

\author{
    Yinxi Pan\thanks{Department of Applied Physics and Applied Mathematics,
    Columbia University, New York, NY 10027; \href{mailto:yp2586@columbia.edu}{yp2586@columbia.edu}}
    \and
    Kui Ren\thanks{Department of Applied Physics and Applied Mathematics,
    Columbia University, New York, NY 10027; \href{mailto:kr2002@columbia.edu}{kr2002@columbia.edu}}
    \and
    Shanyin Tong\thanks{Department of Mathematics, University of Pennsylvania, Philadelphia, PA 19104; \href{mailto:tong3@sas.upenn.edu}{tong3@sas.upenn.edu}}
}

\begin{document}

\maketitle

\begin{abstract}

We present a data-driven reduced-order modeling (ROM) framework for the zeroth angular moment of the solution to the radiative transfer equation (RTE) rather than the full phase-space solution. Our construction is based on the Peierls integral formulation of the angularly averaged density. For media with isotropic scattering, the density satisfies a closed second-kind Fredholm equation with a globally attenuated, weakly singular kernel. We project this equation directly. For media with anisotropic scattering, we utilize the average-fluctuation decomposition to derive a closed system for a projection-based ROM. Numerical simulations are presented to illustrate the effectiveness of the ROMs we implemented. 

\end{abstract}

\begin{keywords}
radiative transfer equation, reduced-order model, reduced basis method, Peierls integral formulation, angular density, proper orthogonal decomposition
\end{keywords}

\begin{AMS}
    65N99, 65R20, 65D32, 85A25, 35Q20
\end{AMS}

\section{Introduction}
\label{SEC:Intro}

The radiative transfer equation (RTE) is a fundamental model for the transport of particles and photons through a scattering and absorbing medium, with applications ranging from astrophysics and neutron transport to biomedical optical imaging and remote sensing; see ~\cite{LeMi-Book93,
Mokhtar-Book97,CeBaBeAi-TTSP99,Arridge-IP99,Bal-IP09,Ren-CiCP10} and references therein. Let $\Omega\subset\bbR^d$ ($d=2,3$) be a bounded convex domain with smooth boundary $\partial\Omega$, and let $\Sd$ denote the unit sphere of admissible directions. We work on the phase space $X:=\Omega\times\Sd$ with incoming and outgoing boundary sets
\[
  \Gamma_{\pm} := \{(\bx,\bv)\in\partial\Omega\times\Sd \mid \pm\,\bv\cdot\bn(\bx)>0\},
\] 
$\bn(\bx)$ being the outward unit normal at $\bx\in\partial\Omega$. The stationary RTE with anisotropic scattering reads
\begin{equation}\label{EQ:RTE}
  \bv\cdot\nabla u + \sigma_t(\bx)\,u
  = \sigma_s(\bx)\int_{\Sd} p(\bv,\bv')\,u(\bx,\bv')\,d\bv' + f(\bx,\bv),
  \quad \text{in } X,
\end{equation}
subject to the standard no-incoming-flux boundary condition $u(\bx,\bv)=0$ on $\Gamma_-$.
Here $u(\bx,\bv)$ is the phase-space density of particles at $\bx$ traveling in direction $\bv$, $\sigma_t=\sigma_a+\sigma_s$ is the total attenuation coefficient, decomposed into absorption $\sigma_a>0$ and scattering $\sigma_s\ge0$ coefficients, $p(\bv,\bv')$ is the scattering phase function, and $f$ is an internal source. The angular measure $d\bv$ is normalized in the sense that $\int_{\Sd} d\bv = 1$ (in other words, $d\bv=d\omega/\nu_d$ where $d\omega$ is the Euclidean surface measure on $\Sd$ and $\nu_d:=|\Sd|$ denotes its unnormalized total area ($\nu_d=2\pi$ for $d=2$, $\nu_d=4\pi$ for $d=3$)). %
The phase function is normalized so that $\int_{\Sd}p(\bv,\bv')\,d\bv=1$ for every $\bv'$, i.e.\ scattering conserves particles. For well-posedness, we assume the standard sub-criticality and boundedness conditions: there exist constants $\gamma_0\in(0,1)$, $0<\sigma_0\le\sigma_1<\infty$ with
\begin{equation}\label{EQ:Assumption}
  \sup_{\bx\in\Omega}\frac{\sigma_s(\bx)}{\sigma_t(\bx)} = \gamma_0 < 1,
  \qquad \sigma_0 \le \sigma_t(\bx)\le\sigma_1 .
\end{equation}
A useful model for $p$ is the Henyey-Greenstein phase function, which depends on $\bv,\bv'$ only through the angle $\theta$ they form. Let $\cos\theta=\bv\cdot\bv'$, then
\begin{equation}\label{EQ:HG}
  p_{\rm HG}(\bv\cdot\bv') =
  \begin{dcases}
    \frac{1-g^2}{1+g^2-2g\cos\theta}, & d=2\\[0.10ex]
    \frac{1-g^2}{(1+g^2-2g\cos\theta)^{3/2}}, & d=3
  \end{dcases}
\end{equation}
with anisotropy parameter $g\in(-1,1)$. Isotropic scattering is when $g=0$ (thus $p\equiv1$).
 
The central difficulty in simulating~\eqref{EQ:RTE} is its dimensionality: the unknown lives on the $(2d-1)$-dimensional set $X$. Standard discretizations are accordingly expensive. This motivates a large literature on model reduction for transport, including but not limited to diffusion and asymptotic-preserving approximations, low-rank and dynamical low-rank methods, and projection-based ROMs~\cite{PeChChLi-JSC22,PeChChLi-MMS24,BuNaYa-JCP24,SaGoQiTi-arXiv24,Peng-JCP24,GaSi-NA25,FuTa-arXiv24}.

In this work, we develop a data-driven reduced-order model for the (rescaled) particle density of RTE, that is,
\begin{equation}\label{EQ:Density}
  U(\bx) := \aver{u}(\bx) := \int_{\Sd} u(\bx,\bv)\,d\bv
\end{equation}
This is simply the zeroth angular moment of the transport solution, which, with the normalized measure, differs from the physical fluence by the constant $\nud$. There are two main reasons to focus on $U(\bx)$ instead of $u(\bx,\bv)$. First, in many physical applications, $U$, not the full angular field, is the quantity of interest and the quantity actually measured. Second, the angular averaging ``smoothes" $u$ so that the manifold of the density $\{U(\cdot)\}$ is of lower dimension and thus more compressible than that of $u$. Therefore, reducing $U$ directly is both cheaper and better aligned with the application.

A naive projection-based ROM needs a governing operator acting on the very quantity being reduced, so that the reduced operator can be formed and the equation Galerkin-projected onto the reduced basis. In this work, we use the integral formulation of the transport equation~\eqref{EQ:RTE} to get a closed form equation for $U$. Integrating~\eqref{EQ:RTE} along
characteristics analytically eliminates the angular variable and yields a self-contained equation \emph{in physical space} for the density. In the
isotropic stationary case this is the classical Peierls integral equation~\cite{DaLi-Book93-6,ReZhZh-JCP19}: $U$ solves a second-kind Fredholm equation
\begin{equation}\label{EQ:RTE Integral Form}
  (\cI-K)\,U(\bx) = K(\sigma_s^{-1}f)(\bx), \qquad
  Kg(\bx) = \int_\Omega \cK(\bx,\by)\,g(\by)\,d\by,
\end{equation}
where the integral operator $K$ has a weakly singular, globally attenuated kernel
\begin{equation}\label{EQ:Kernel}
\cK(\bx,\by) = \sigma_s(\by)W(\bx,\by),\quad W(\bx,\by):=\dfrac{1}{\nud}\,
  \frac{E(\bx,\by)}{|\bx-\by|^{d-1}},
\end{equation}
with
\[
 E(\bx,\by) = \exp\!\Big(-\!\dint_0^{|\bx-\by|}\sigma_t\big(\bx-s\,\omxy\big)\,ds\Big)\,,
\]
being the line-of-sight attenuation and $\omxy=(\bx-\by)/|\bx-\by|$. This equation is exact: it holds well beyond the diffusion limit and, crucially for ROM, its kernel is a globally supported, smoothly attenuated kernel whose discretized operator has rapidly decaying singular values, exactly the structure that makes the projection-based reduction effective. In the anisotropic case, similar integral formulations are developed and used to formulate the corresponding ROMs. Besides the proposed ROM algorithms, we also provide the theoretical analysis of their stability and accuracy, which are further illustrated by several numerical experiments in either isotropic or anisotropic settings.

Model reduction for kinetic and transport equations has grown into a broad field. We briefly survey the related literature in a few directions, and discuss their differences from our work. 

Reduced-basis and POD-Galerkin methods have been built directly on the transport equation: greedy reduced-basis solvers for the (parametric, steady, and time-dependent) radiative transfer equation~\cite{PeChChLi-JSC22,PeChChLi-MMS24,
MaChChLi-JCP25}, POD reduction of the \emph{angular} variable for neutron/photon transport and for non-grey media~\cite{BuCaGoDaFaPaNa-JCP15,HuBu-IJNME22,SoBuDaPa-JQSRT19}, space-angle and space-time reduced
models~\cite{BuNaYa-JCP24,ChBrArAnHu-JCP20}, and ROM-accelerated iterative solvers that embed a reduced model as a synthetic-acceleration preconditioner or offline/online forward
map~\cite{Peng-JCP24,FuTa-arXiv24,EtFaYi-JCP23}. Most of these reduce the full phase-space field $u$ (often in the angular variable). We instead reduce the \emph{density} $U$ on its closed integral equation, so the reduced unknown lives in physical space and inherits the smoothing and contraction of the integral operator.
 
A complementary line reduces a macroscopic quantity by closing a moment hierarchy with data-driven approximations of the Eddington tensor / variable Eddington factor~\cite{CoAn-JQSRT23,CoAn-JCP24} or with structure-preserving and entropy-based machine-learning closures~\cite{HuChChRo-JCP22,HuEtAl-MMS24,ScLaFrHa-JCP25}. Neural-operator surrogates (DeepONet/FNO) and related neural-network surrogates learn the parameter-to-solution map directly~\cite{SaChPaJi-arXiv26,KoKrReSc-MN24}. Our density $U$ is the zeroth such moment, but rather than \emph{learning} a closure we use the \emph{exact} closure furnished by the integral formulation (the isotropic Peierls equation, and the Schur-complement closure of \Cref{THM:ANISO Closed} for anisotropy).
 
A large body of work compresses the phase-space solution \emph{in situ} by dynamical low-rank approximation (DLRA)~\cite{KoLu-SIMAX07,DiEiLi-SINUM21,EiHuYi-SISC21},
including asymptotic-preserving, energy-stable, conservative, and rank-adaptive integrators for (thermal) radiative transfer and the Vlasov equation~\cite{EiHuKu-SINUM24,PaFrKu-arXiv24,PaKu-arXiv25,BaEiKlKu-arXiv25,
CeKuLu-SISC24,EiJo-JCP21,GuQi-JSC24,UsZe-arXiv25}, as well as hierarchical-Tucker and tensor-train representations~\cite{SaGoQiTi-arXiv24,Ju-arXiv26}. Our reduction is instead offline/online and projection-based on the angularly-averaged integral equation.

Dynamic mode decomposition accelerates transport sweeps and computes time eigenvalues~\cite{Mc-NSE19,McHa-JCP21}, and related reductions exist for plasma (Vlasov) kinetics~\cite{TyKr-CPP23,TsCgGhLoChBe-arXiv23,PaVi-JCP25}, the nonlinear Boltzmann equation of rarefied gases~\cite{CaLiLuWa-arXiv25,Sa-JCP21,SuZo-arXiv25}, and the phonon Boltzmann transport
equation~\cite{HoGrRo-APL23,VaRoPe-arXiv25}. 

The rest of the paper is organized as follows. We first review the integral formulations and their analytic properties in \Cref{SEC:Integral Form}. We then develop the density-based ROM in~\Cref{SEC:ROM}, along with its projection variants, error analysis, and offline/online structure. \Cref{SEC:FOM} describes the full-order discretization of the models. Numerical simulations are provided in~\Cref{SEC:Num} to validate the proposed algorithms. Concluding remarks are offered in~\Cref{SEC:Concl}. 

\section{Integral formulations of RTE}
\label{SEC:Integral Form}

Let us recall the classical integral formulation, following the
notation of~\cite{ReZhZh-JCP19,ReZhZh-SIAM21,FaAnYi-JCP19,ZhZh-CSIAM20,
EtFaYi-JCP23}.

\subsection{The isotropic scattering case}
\label{SUBSEC:Integral ISO}

When the scattering is isotropic, that is, $p(\bv,\bv')\equiv 1$, the RTE simplifies to
\begin{equation}\label{EQ:RTE ISO}
\begin{aligned}
  \bv\cdot\nabla u(\bx,\bv) + \sigma_t(\bx)\,u(\bx,\bv)
  &= \sigma_s(\bx)\,U(\bx) + f(\bx), && \text{in } X,\\
  u(\bx,\bv) &= 0, && \text{on } \Gamma_-,
\end{aligned}
\end{equation}
where $U=\aver{u}$ as in~\eqref{EQ:Density} and we assumed an isotropic source $f=f(\bx)$. Let us write the right-hand side as the (angularly isotropic) source $S(\bx):=\sigma_s(\bx)U(\bx)+f(\bx)$. Freezing $S$, integrating the linear transport operator along the characteristic through $\bx$ in direction $\bv$, and using the boundary condition, gives
\begin{equation}\label{EQ:Characteristic Sol}
  u(\bx,\bv) = \int_0^{\tau_-(\bx,\bv)}
  \exp\!\Big(-\!\int_0^\ell \sigma_t(\bx-s\bv)\,ds\Big)\,
  S(\bx-\ell\bv)\,d\ell,
\end{equation}
where the backward exit distance is
$\tau_-(\bx,\bv):=\sup\{s\mid \bx-s'\bv\in\Omega \ \forall\, 0\le s'<s\}$.
Averaging~\eqref{EQ:Characteristic Sol} over $\bv\in\Sd$ and converting the
$(\ell,\bv)$ integral to a volume integral over $\by=\bx-\ell\bv$ through the polar
change of variables $\ell=|\bx-\by|$, $\bv=\omxy=(\bx-\by)/|\bx-\by|$,
\begin{equation}\label{EQ:Polar}
  d\by = \nud\, \ell^{d-1}\,d\ell\,d\bv ,
\end{equation}
yields the closed second-kind Fredholm equation for the density $U$ given in~\eqref{EQ:RTE Integral Form}, whose right-hand side we abbreviate as $\phi:=K(\sigma_s^{-1}f)$. The function $E$ is symmetric in the sense that $E(\bx,\by)=E(\by,\bx)$ (the integral is taken over the same segment). When $\sigma_s$ and $\sigma_t$ are constant, the kernel reduces to $\cK(\bx,\by)=\frac{\sigma_s}{\nu_d}\,
e^{-\sigma_t|\bx-\by|}/|\bx-\by|^{d-1}$. 

\begin{remark}
Let us emphasize that equation~\eqref{EQ:RTE Integral Form} is \emph{exact}: no diffusion or small-mean-free-path approximation is made. It reduces the unknown from the $(2d-1)$-dimensional phase space to the $d$-dimensional physical space and may be solved by fast integral-equation methods (e.g.\ FMM-accelerated solvers~\cite{ReZhZh-JCP19,FaAnYi-JCP19,ReZhZh-SIAM21}). It is the exactness across regimes, together with the kernel's smoothing, that we exploit for model reduction.
\end{remark}

It is a classical result~\cite{Agoshkov-Book12,DaLi-Book93-6} that $K$ is a contraction in the following sense.
\begin{proposition}[\cite{Agoshkov-Book12,DaLi-Book93-6}]\label{PROP:Contraction}
Under~\eqref{EQ:Assumption}, assume further that $\sigma_s(\bx)\ge \sigma_{s,\min}$ in $\Omega$ for some $\sigma_{s,\min}>0$. Then the operator $K$ in~\eqref{EQ:RTE Integral Form} is a contraction on $L^\infty(\Omega)$ with
\begin{equation}\label{EQ:Contraction}
  \|K\|_{L^\infty\to L^\infty}
  = \sup_{\bx\in\Omega}\int_\Omega \cK(\bx,\by)\,d\by
  \le \sup_{\bx\in\Omega}\frac{\sigma_s(\bx)}{\sigma_t(\bx)} =: \gamma_0 < 1,
\end{equation}
hence $\cI-K$ is boundedly invertible and~\eqref{EQ:RTE Integral Form} has a unique solution $U=(\cI-K)^{-1}\phi\in L^\infty(\Omega)$.
\end{proposition}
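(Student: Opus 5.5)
The plan is to use the positivity of the kernel to reduce the operator norm to a row integral. I will then evaluate that integral exactly by undoing the polar change of variables~\eqref{EQ:Polar}, after which the attenuation factor $E$ becomes an exact derivative along each ray.

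\emph{Step 1: the norm equals the supremum of row integrals.} Since $\sigma_s\ge0$, $E>0$ and $|\bx-\by|^{1-d}>0$, we have $\cK(\bx,\by)\ge0$. For any $g\in L^\infty(\Omega)$ this gives $|Kg(\bx)|\le \|g\|_{L^\infty}\int_\Omega\cK(\bx,\by)\,d\by$, hence $\|K\|\le\sup_{\bx}\int_\Omega\cK(\bx,\by)\,d\by$. Taking $g\equiv1$ gives $\|K1\|_{L^\infty}=\sup_{\bx}\int_\Omega\cK(\bx,\by)\,d\by$, so equality holds, understood as an essential supremum. The row integral is finite because the singularity $|\bx-\by|^{1-d}$ is integrable in $\bbR^d$; this also follows from Step 2.

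\emph{Step 2: exact evaluation of the row integral.} Fix $\bx\in\Omega$ and write $\by=\bx-\ell\bv$ with $\bv\in\Sd$ and $0<\ell<\tau_-(\bx,\bv)$. Convexity of $\Omega$ ensures that each ray meets $\Omega$ in a single segment. By~\eqref{EQ:Polar}, the factors $\nud$ and $\ell^{d-1}$ cancel those in $W$, and we obtain
\[
\int_\Omega \cK(\bx,\by)\,d\by=\int_{\Sd}\int_0^{\tau_-(\bx,\bv)}\sigma_s(\bx-\ell\bv)\,e^{-\int_0^\ell\sigma_t(\bx-s\bv)\,ds}\,d\ell\,d\bv .
\]
Next I use the pointwise bound $\sigma_s\le\gamma_0\sigma_t$, which follows from~\eqref{EQ:Assumption}. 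The integrand is then bounded by $\gamma_0\,\sigma_t(\bx-\ell\bv)e^{-\Lambda(\ell)}$, where $\Lambda(\ell):=\int_0^\ell\sigma_t(\bx-s\bv)\,ds$, and this equals $-\gamma_0\frac{d}{d\ell}e^{-\Lambda(\ell)}$. The inner integral is therefore at most $\gamma_0\big(1-e^{-\Lambda(\tau_-)}\big)\le\gamma_0$. Averaging over $\bv$ with the normalized measure $\int_{\Sd}d\bv=1$ gives~\eqref{EQ:Contraction}.

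\emph{Step 3: invertibility.} Since $\|K\|\le\gamma_0<1$, the Neumann series $\sum_{n\ge0}K^n$ converges in operator norm to $(\cI-K)^{-1}$, with $\|(\cI-K)^{-1}\|\le(1-\gamma_0)^{-1}$. The extra hypothesis $\sigma_s\ge\sigma_{s,\min}>0$ enters only here: it makes $\sigma_s^{-1}f\in L^\infty$ whenever $f\in L^\infty$, so $\phi=K(\sigma_s^{-1}f)\in L^\infty$ and $U=(\cI-K)^{-1}\phi$ is the unique $L^\infty$ solution.

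The main obstacle is the justification in Step 2. One must check that the change of variables is legitimate for an integrand that is only measurable, which Tonelli's theorem handles because the integrand is nonnegative. One must also check that $\ell\mapsto\Lambda(\ell)$ is absolutely continuous, which holds because $\sigma_t$ is bounded, so the fundamental theorem of calculus applies along almost every ray. I would state both points briefly and not belabor them.
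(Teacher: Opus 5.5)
The paper gives no proof of its own here; it quotes the result from the classical references (Agoshkov; Dautray--Lions). Your argument is correct and is exactly the standard proof behind those citations: positivity of $\cK$ reduces the norm to $\sup_{\bx}(K1)(\bx)$, and after the polar change of variables~\eqref{EQ:Polar} (the same one the paper uses to derive~\eqref{EQ:RTE Integral Form}) the pointwise bound $\sigma_s\le\gamma_0\sigma_t$ makes each ray integral at most $\gamma_0\big(1-e^{-\Lambda(\tau_-)}\big)$.
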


Proposition~\ref{PROP:Contraction} is the analytic backbone of the ROM error analysis in~\Cref{SUBSEC:Error}: because $\|K\|<1$, both the full and the reduced second-kind problems are uniformly well-posed, and the reduced residual controls the reduced error.

\subsection{Fundamental properties given by integral formulation}
\label{SUB:structure}

Our main motivation, that is, the density manifold is generally more compressible than the phase-space manifold, can be made more precise. We now recall a few properties of the integral operator $K$ that together explain why the density-based ROM works. 
\medskip

\noindent{\bf Smoothing property.} The angular averaging that produced~\eqref{EQ:RTE Integral Form} converts the transport operator into a compact, smoothing integral operator. Indeed, from~\eqref{EQ:Kernel} and $E\le1$, $0\le\sigma_s\le\sigma_1$, we have
\begin{equation}\label{EQ:Weak Sing}
  0\le \cK(\bx,\by)\le \frac{\sigma_1}{\nud}\,\frac{1}{|\bx-\by|^{d-1}}\,.
\end{equation}
Therefore, $\cK$ is weakly singular (the exponent $d-1$ is strictly below the space dimension $d$, hence locally integrable).

\begin{lemma}\label{LEM:Smoothing}
Let $\sigma_t,\sigma_s\in L^\infty(\Omega)$ satisfy~\eqref{EQ:Assumption}. Then $K$ is a compact operator on both $L^2(\Omega)$ and $C(\overline\Omega)$, and it smooths: it maps $L^\infty(\Omega)$ continuously into the H\"{o}lder space $C^{0,\lambda}(\overline\Omega)$ for every $\lambda\in(0,1)$, with $\|Kg\|_{C^{0,\lambda}}\le C_\lambda\|g\|_{L^\infty}$. Consequently the density $U=(\cI-K)^{-1}\phi=\phi+KU$ is H\"{o}lder continuous, $U\in C^{0,\lambda}(\overline\Omega)$, for every source $f\in
L^\infty(\Omega)$.
\end{lemma}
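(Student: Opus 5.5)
The plan is to reduce everything to two pointwise facts about the kernel: the weak-singularity bound \eqref{EQ:Weak Sing}, and a Lipschitz-type estimate for the attenuation factor $E$ in its first argument. Compactness on $L^2(\Omega)$ will follow by truncation. For $\epsilon>0$ write $\cK=\cK_\epsilon+\cR_\epsilon$ with $\cK_\epsilon(\bx,\by)=\cK(\bx,\by)\mathbf 1_{|\bx-\by|>\epsilon}$. Since $\cK_\epsilon$ is bounded on $\Omega\times\Omega$, the associated operator $K_\epsilon$ is Hilbert--Schmidt and hence compact. By \eqref{EQ:Weak Sing} the remainder kernel satisfies
\[
\sup_{\bx}\int_\Omega|\cR_\epsilon(\bx,\by)|\,d\by\le \sigma_1\epsilon ,
\]
and the same bound holds for the supremum over $\by$ of the integral in $\bx$. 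The Schur test then gives $\|K-K_\epsilon\|_{L^2\to L^2}\le\sigma_1\epsilon$. As a norm limit of compact operators, $K$ is compact on $L^2$. Compactness on $C(\overline\Omega)$ will come out of the Hölder estimate: $K$ maps bounded sets of $C(\overline\Omega)\subset L^\infty$ into bounded sets of $C^{0,\lambda}(\overline\Omega)$, and these are relatively compact in $C(\overline\Omega)$ by Arzelà--Ascoli.

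For the Hölder estimate, the sup bound $\|Kg\|_{L^\infty}\le\sigma_1\,\mathrm{diam}(\Omega)\|g\|_{L^\infty}$ is immediate from \eqref{EQ:Weak Sing} and polar coordinates \eqref{EQ:Polar}. Fix $\bx,\bx'\in\overline\Omega$ with $\delta=|\bx-\bx'|$, and split $\Omega$ into the near region $B=\{\by:|\by-\bx|<2\delta\}$ and its complement. On $B$, bound each term separately by \eqref{EQ:Weak Sing}:
\[
\int_B\big(|\cK(\bx,\by)|+|\cK(\bx',\by)|\big)\,d\by\le C\delta ,
\]
using that $B$ lies in a ball of radius $3\delta$ around $\bx'$. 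On the far region $|\by-\bx|\ge2\delta$, we have $|\by-\bx'|\ge|\by-\bx|/2$. There I will estimate $|W(\bx,\by)-W(\bx',\by)|$ by splitting the difference into the change in $|\bx-\by|^{1-d}$ and the change in $E$. The first part is handled by the mean value theorem, giving a bound $C\delta|\bx-\by|^{-d}$.

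The main obstacle is the change in $E$ when $\sigma_t$ is merely $L^\infty$, since $E$ is then not differentiable. I would use convexity of $\Omega$. The segments $[\by,\bx]$ and $[\by,\bx']$ span a thin triangle, and the optical-path difference can be written as an integral of $\sigma_t$ over rays with bounded integrand. The goal is the estimate
\[
|E(\bx,\by)-E(\bx',\by)|\le C\min\!\big(1,\ \delta/|\bx-\by|\big)\,|\bx-\by| ,
\]
obtained from a parametrization of the triangle through $\by$ by angle and radius. The triangle has angular width of order $\delta/|\bx-\by|$, and applying Fubini in polar coordinates centered at $\by$ controls the discrepancy after integrating $\by$ against $|\bx-\by|^{1-d}$, rather than pointwise. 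If this pointwise route fails for rough $\sigma_t$, the fallback is to prove the integrated estimate directly: bound $\int_{\Omega\setminus B}|E(\bx,\by)-E(\bx',\by)|\,|\bx-\by|^{1-d}d\by$ by $C\delta\log(1/\delta)$, via polar coordinates around $\bx$ and the fact that $|\partial_\theta|$ of the optical depth is bounded by $\sigma_1$ times the ray length.

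Combining the pieces, the far region contributes at most $C\delta\int_{2\delta}^{\mathrm{diam}}r^{-d}r^{d-1}dr=C\delta\log(1/\delta)$, and likewise for the $E$ term. This yields $|Kg(\bx)-Kg(\bx')|\le C\delta(1+|\log\delta|)\|g\|_\infty\le C_\lambda\delta^\lambda\|g\|_\infty$ for every $\lambda<1$; the log-Lipschitz loss is exactly why $\lambda=1$ is excluded. Finally, Proposition~\ref{PROP:Contraction} gives $U\in L^\infty$ whenever $f\in L^\infty$. Also $\phi=K(\sigma_s^{-1}f)$ lies in $C^{0,\lambda}$, since $\sigma_s^{-1}f\in L^\infty$ under the lower bound $\sigma_{s,\min}>0$; without that bound one notes directly that $\cK\sigma_s^{-1}=W$ obeys the same estimates. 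Hence $U=\phi+KU\in C^{0,\lambda}(\overline\Omega)$.
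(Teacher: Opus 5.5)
\textbf{Gap: the estimate for the attenuation $E$.} Several parts of your proposal are fine: the truncation plus Schur-test proof of compactness on $L^2(\Omega)$, the near-region bound, the mean-value estimate for $|\bx-\by|^{1-d}$, and the final bootstrap $U=\phi+KU$. The gap is the step you yourself flag as the main obstacle, and neither of your two routes closes it.

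Your pointwise bound $|E(\bx,\by)-E(\bx',\by)|\le C\min(\delta,|\bx-\by|)$ is false for $\sigma_t\in L^\infty$. Suppose $\sigma_t$ equals $c_1$ on one side of a line through $\by$ and $c_2\neq c_1$ on the other. Place $\bx,\bx'$ at distance about $r$ from $\by$, on opposite sides of that line. Then the two segments sample $c_1$ and $c_2$ along their whole length, so $|E(\bx,\by)-E(\bx',\by)|\approx|e^{-c_1r}-e^{-c_2r}|$ however small $\delta$ is. Moving $\by$ slightly off the line only makes the Lipschitz constant of $E(\cdot,\by)$ arbitrarily large.

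The thin-triangle picture does not help. The triangle's area $\sim\delta r$ does not control line integrals of an $L^\infty$ function along its two long sides.

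Your fallback rests on a false fact as well. $\|\sigma_t\|_{L^\infty}$ controls the derivative of the optical depth \emph{along} the ray, since $\partial_\ell\tau=\sigma_t$. The angular derivative, however, is $-\int_0^\ell s\,\nabla\sigma_t(\bx-s\bv)\cdot\bv^\perp\,ds$, which needs $\nabla\sigma_t$.

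Your route to compactness on $C(\overline\Omega)$ goes through the same estimate. Compactness there can still be recovered from equicontinuity, using $L^1$-continuity of translations of $\sigma_t$, but without any rate.

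\textbf{The H\"older claim fails at this generality.} No sharper estimate can remove this gap. Take $d=2$, $\Omega$ a disk centred at $\bx_0$, $\sigma_s$ constant, and $g\equiv1$. Let $\sigma_t=\sigma_0+\eta\mathbf 1_A$, where $A$ is a union of alternate angular sectors with vertex $\bx_0$. Their widths $w(\varphi)\to0$ as the angle $\varphi$ to a fixed direction tends to $0$. Let $\mathbf e$ be perpendicular to that direction.

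Expand $E=e^{-\tau}$ in $\eta$ about the mean rate. The first-order part of $Kg$ is a weakly singular potential of $\mathbf 1_A$, hence log-Lipschitz. The second-order part is a positive multiple of $\int d\bv\int\!\!\int h(\bx-s\bv)\,h(\bx-s'\bv)\,k(s,s')\,ds\,ds'$, with $h=\mathbf 1_A-\tfrac12$ and $k>0$.

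Every ray from $\bx_0$ stays in one sector, so the product $h(\bx-s\bv)h(\bx-s'\bv)$ attains its maximum $\tfrac14$ at $\bx=\bx_0$ for all $(\bv,s,s')$. By contrast, from $\bx_0+\delta\mathbf e$ a ray at angle $\varphi$ crosses many sectors over an initial stretch of length $\sim\delta/w(\varphi)$. For small $\eta$ this gives $Kg(\bx_0)-Kg(\bx_0+\delta\mathbf e)\ge c\,\eta^2\int\min\{1,\delta/w(\varphi)\}\,d\varphi-C\delta|\log\delta|$. The right-hand side is at least of order $\sqrt\delta$ for $w(\varphi)=\varphi^2$, and of order $1/\log(1/\delta)$ for $w(\varphi)=e^{-1/\varphi}$. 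So $Kg$ need not be H\"older for any $\lambda$.

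\textbf{What fixes it.} The lemma needs extra regularity of $\sigma_t$. For Lipschitz $\sigma_t$ one has $|\nabla_\bx\tau(\bx,\by)|\le\sigma_1+\mathrm{diam}(\Omega)\,\|\nabla\sigma_t\|_{L^\infty}$. Hence $|E(\bx,\by)-E(\bx',\by)|\le C\delta$, and your far-field computation then closes with $C\delta(1+|\log\delta|)$.

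The paper's own proof has the same hole. It defers to \cite{Kress-Book14}, whose weakly singular kernels are continuous off the diagonal; that fails here once $\sigma_t$ jumps. It also asserts Lipschitz dependence of $E$ on its endpoints ``as a consequence of $\sigma_t\in L^\infty$'', which is exactly what breaks. Your proposal does isolate precisely the step that needs the additional hypothesis.
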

\begin{proof}
The bound~\eqref{EQ:Weak Sing} is the defining estimate of a weakly singular
kernel with singularity exponent $1$ on a $d$-dimensional domain. Compactness on
$C(\overline\Omega)$ and $L^2(\Omega)$ and the mapping into $C^{0,\lambda}$,
$\lambda<1$, are then classical~\cite[Thms.~2.22--2.30]{Kress-Book14}. The Lipschitz dependence of the attenuation $E(\bx,\by)$ on its endpoints (a consequence of $\sigma_t\in L^\infty$) does not affect the singularity order. The last claim follows from $U=\phi+KU$ with $\phi,KU\in C^{0,\lambda}$.
\end{proof}

In general, the phase-space solution operator $f\mapsto u$ has no analogous smoothing away from the diffusion limit, so $\{u^f\}$ resists linear compression, whereas Lemma~\ref{LEM:Smoothing} confines $\{U^f:\|f\|_\infty\le1\}$ to a bounded subset of $C^{0,\lambda}$, which is
precompact in $C(\overline\Omega)$ by Arzel\`a--Ascoli. Precompactness is
exactly the qualitative statement that the density manifold can be approximated to any tolerance by a finite-dimensional space. Proposition~\ref{PROP:n-Width} below makes the rate quantitative.
\medskip

\noindent{\bf Hidden self-adjointness.} The kernel factors as $\cK(\bx,\by)=\sigma_s(\by)\,W(\bx,\by)$ with the symmetric attenuation weight $W(\bx,\by)=E(\bx,\by)/(\nud|\bx-\by|^{d-1})=W(\by,\bx)$ (cf.~\eqref{EQ:Kernel}). This algebraic structure makes $K$ self-adjoint in a weighted inner product, a fact we exploit to sharpen the ROM error constant and to choose the POD inner product.

\begin{lemma}\label{LEM:Symm}
Under the assumption in~\eqref{EQ:Assumption} and the additional lower bound $\sigma_s\ge \sigma_{s,\min}>0$, we equip $L^2(\Omega)$ with the $\sigma_s$-weighted inner product
\[
    \sgmsip{f}{g}:=\int_\Omega \sigma_s(\bx)\,f(\bx)g(\bx)\,d\bx\,,
\]
and the corresponding norm $\|f\|_{\sigma_s}:=\sgmsip{f}{f}^{1/2}$. Then $K$ is self-adjoint with respect to $\sgmsip{\cdot}{\cdot}$. The spectrum of $K$ is
real, and
\begin{equation}\label{EQ:Real Spec}
  \operatorname{spec}(K)\subset[-\gamma_0,\,\gamma_0]\subset(-1,1),
  \qquad \|K\|_{\sigma_s} = \rho(K)\le \gamma_0 .
\end{equation}
In particular, $\cI-K$ is self-adjoint and positive-definite in $\sgmsip{\cdot}{\cdot}$, with $\sgmsip{(\cI-K)g}{g}\ge(1-\gamma_0)\sgmsnorm{g}^2$.
\end{lemma}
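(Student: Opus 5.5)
We will prove self-adjointness first and then derive the spectral facts from it and from Proposition~\ref{PROP:Contraction}. The weight $\sigma_s$ is bounded above by $\sigma_1$ and below by $\sigma_{s,\min}>0$, so $\sgmsnorm{\cdot}$ is equivalent to the standard $L^2$ norm. Hence $K$ is a bounded (indeed compact, by Lemma~\ref{LEM:Smoothing}) operator on the weighted Hilbert space. For $f,g\in L^2(\Omega)$ we write
\[
\sgmsip{Kf}{g}=\int_\Omega\int_\Omega \sigma_s(\bx)\,g(\bx)\,W(\bx,\by)\,\sigma_s(\by)\,f(\by)\,d\by\,d\bx .
\]
The integrand is symmetric under $\bx\leftrightarrow\by$ with $f\leftrightarrow g$, because $W(\bx,\by)=W(\by,\bx)$ by the symmetry of $E$. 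Fubini's theorem then gives $\sgmsip{Kf}{g}=\sgmsip{f}{Kg}$. To justify Fubini, note that $|W|\le 1/(\nud|\bx-\by|^{d-1})$ by~\eqref{EQ:Weak Sing}. The operator with kernel $|\bx-\by|^{1-d}$ is bounded on $L^2$ by Schur's test, since its row and column integrals are uniformly bounded on the bounded set $\Omega$. Therefore the double integral converges absolutely for $f,g\in L^2$.

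Since $K$ is bounded and self-adjoint on a Hilbert space, its spectrum is real and $\|K\|_{\sigma_s}=\rho(K)$. It remains to show $\rho(K)\le\gamma_0$. The spectrum of $K$ does not depend on the choice of equivalent norm on $L^2$, so it suffices to locate it using another space. I would argue as follows. By compactness, every nonzero spectral point is an eigenvalue $\lambda$ with an eigenfunction $g\in L^2$. From $g=\lambda^{-1}Kg$ and the smoothing in Lemma~\ref{LEM:Smoothing}, $g$ lies in $L^\infty$. This needs a short bootstrap: a weakly singular kernel maps $L^2$ into $L^p$ for larger $p$, and iterating finitely many times reaches $L^\infty$. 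Then $g$ is an $L^\infty$-eigenfunction, and Proposition~\ref{PROP:Contraction} gives $|\lambda|\le\|K\|_{L^\infty\to L^\infty}\le\gamma_0$. Hence $\operatorname{spec}(K)\subset[-\gamma_0,\gamma_0]$.

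As an alternative that avoids the bootstrap, I would use Schur's test directly in the weighted space. The row sums satisfy $\sup_\bx\int\cK(\bx,\by)\,d\by\le\gamma_0$ by Proposition~\ref{PROP:Contraction}. For the column sums, the symmetry of $W$ gives $\int_\Omega \sigma_s(\bx)W(\bx,\by)\,d\bx=\sigma_s(\by)^{-1}\sigma_s(\by)\int_\Omega\sigma_s(\bx)W(\by,\bx)\,d\bx\le\gamma_0$. Together these row and column bounds give $\|K\|_{\sigma_s}\le\gamma_0$ immediately. I expect this to be the cleaner route, and I would present it as the main argument.

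Finally, since $K$ is self-adjoint with spectrum in $[-\gamma_0,\gamma_0]$, the operator $\cI-K$ is self-adjoint with spectrum in $[1-\gamma_0,1+\gamma_0]$. The spectral theorem, or simply Cauchy--Schwarz with $\|K\|_{\sigma_s}\le\gamma_0$, then gives $\sgmsip{(\cI-K)g}{g}\ge\sgmsnorm{g}^2-|\sgmsip{Kg}{g}|\ge(1-\gamma_0)\sgmsnorm{g}^2$. The main obstacle is bounding the weighted operator norm by $\gamma_0$, since Proposition~\ref{PROP:Contraction} only controls $K$ on $L^\infty$. The weighted Schur argument, which combines the row-sum bound with the symmetry of $W$, is what resolves it.
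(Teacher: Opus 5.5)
Your proposal is correct. Your first spectral argument is exactly the paper's proof. The paper gets self-adjointness from the identity $\sigma_s(\bx)\cK(\bx,\by)=\sigma_s(\by)\cK(\by,\bx)$ and reads $\|K\|_{\sigma_s}=\rho(K)$ off the spectral theorem. It then bootstraps eigenfunctions into $L^\infty$ via Lemma~\ref{LEM:Smoothing} and bounds the eigenvalues with the $L^\infty$ contraction of Proposition~\ref{PROP:Contraction}.

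The weighted Schur test you make your main argument is a genuinely different route, and a cleaner one. You regard $K$ as the integral operator with the symmetric kernel $W$ against the measure $\sigma_s\,d\by$. The row bound $\sup_{\bx}\int_\Omega \sigma_s(\by)W(\bx,\by)\,d\by\le\gamma_0$ is the same quantity Proposition~\ref{PROP:Contraction} controls, and symmetry of $W$ turns it into the identical column bound. Cauchy--Schwarz then gives $\|K\|_{\sigma_s}\le\gamma_0$ directly. This is equivalently an interpolation between the $L^\infty$ bound and its dual $L^1(\sigma_s\,d\bx)$ bound.

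What your route buys:
\begin{itemize}
\item It needs no compactness, no Riesz--Schauder identification of nonzero spectrum with eigenvalues, and no $L^p$ bootstrap. The paper only sketches that bootstrap.
\item It uses the symmetry of $W$ only pointwise, to swap rows and columns. The spectral theorem is then needed only for reality of the spectrum and for $\|K\|_{\sigma_s}=\rho(K)$.
\item Your Cauchy--Schwarz derivation of $\sgmsip{(\cI-K)g}{g}\ge(1-\gamma_0)\sgmsnorm{g}^2$ likewise rests only on the norm bound.
\end{itemize}

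What the paper's route buys is the structural fact that every nonzero eigenvalue of $K$ in the weighted $L^2$ space is already an eigenvalue on $L^\infty$. That is conceptually informative, but the lemma does not need it.
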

\begin{proof}
The self-adjointness of $K$ in $\sgmsip{\cdot}{\cdot}$ follows from the identity
$\sigma_s(\bx)\cK(\bx,\by)=\sigma_s(\by)\cK(\by,\bx)$, which holds because both sides equal $\sigma_s(\bx)\sigma_s(\by)W(\bx,\by)$ and $W$ is symmetric. Since $K$ is compact and self-adjoint in $\sgmsip{\cdot}{\cdot}$, its spectrum is real and $\|K\|_{\sigma_s}=\rho(K)$. Meanwhile, every nonzero eigenvalue of $K$ is shared with $L^\infty(\Omega)$: an eigenfunction $\phi=\lambda^{-1}K\phi$ with $\lambda\neq0$ is H\"older continuous by the smoothing of Lemma~\ref{LEM:Smoothing} (after a finite integrability bootstrap, as $K$ is a weakly singular operator that raises integrability), hence $\phi\in L^\infty(\Omega)$ and $\lambda$ is an eigenvalue of $K$ on $L^\infty$ as well. Thus $|\lambda|\le\|K\|_{L^\infty\to L^\infty}\le\gamma_0$ by~\eqref{EQ:Contraction}, so that $\operatorname{spec}(K)\subset[-\gamma_0,\gamma_0]$ and $\|K\|_{\sigma_s}=\rho(K)\le\gamma_0$. Positivity of $\cI-K$ follows from $\operatorname{spec}(\cI-K)\subset[1-\gamma_0,1+\gamma_0]$.
\end{proof}

The above result will be used in~\Cref{SEC:ROM}, where computing the
POD in the $\sigma_s$-weighted inner product makes the Galerkin reduced operator $\widehat\bK$ symmetric and renders the reduced problem the energy-orthogonal projection of the full one, which improves the C\'ea constant of Theorem~
\ref{THM:Cea} from $\tfrac{1+\kappa}{1-\kappa}$ to its square root
(Corollary~\ref{COR:ROM Energy}).  

\noindent{\bf Kolmogorov-width bound.} The error analysis of \Cref{SEC:ROM} bounds the ROM error by the best-approximation error of $U$ in the reduced space. The relevant intrinsic quantity is therefore the Kolmogorov $n$-width of the solution manifold $\cM:=\{U^{f(\mu)}:\mu\in\cP\}$ over the admissible parameter set $\cP$,
\[
    d_n(\cM):=\inf_{\dim V_n=n}\sup_{U\in\cM}\inf_{v\in V_n}\|U-v\|\,.
\]
The following result is a specialization to the density map of the polynomial-approximation
theory for holomorphic parametric problems~\cite[Sections 3-4]{CoDe-AN15}. 
\begin{proposition}
\label{PROP:n-Width}
Suppose the source $f(\cdot;\mu)$ depends holomorphically on a parameter $\mu$ ranging over a compact set $\cP\subset\bbR^p$ that admits a complex neighborhood on which $\mu\mapsto f(\cdot;\mu)\in L^\infty(\Omega)$ is bounded and holomorphic. Then the map
$\mu\mapsto U^{f(\mu)}=(\cI-K)^{-1}K(\sigma_s^{-1}f(\mu))$ is holomorphic into $C^{0,\lambda}(\overline\Omega)$, and the Kolmogorov widths of $\cM$ decay sub-exponentially in the sense that
\begin{equation}\label{EQ:n-Width}
  d_n(\cM)\ \le\ C\,\exp\!\big(-c\,n^{1/p}\big),
\end{equation}
for constants $C,c>0$ depending on $\gamma_0$, $\sigma_1$, and the domain of holomorphy. 
\end{proposition}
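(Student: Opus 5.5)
The argument has two parts. First I show that the parameter-to-density map is holomorphic. Then I convert holomorphy into a width bound through polynomial approximation in $\mu$, since any linear space spanned by snapshot-type coefficient functions bounds $d_n$ from above.

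\textbf{Step 1 (holomorphy).} The solution map factors as
\[
\mu\mapsto f(\cdot;\mu)\mapsto \sigma_s^{-1}f(\cdot;\mu)\mapsto K(\sigma_s^{-1}f(\cdot;\mu))\mapsto (\cI-K)^{-1}K(\sigma_s^{-1}f(\cdot;\mu)).
\]
Every arrow after the first is a bounded linear operator that does not depend on $\mu$.
\begin{itemize}
\item Multiplication by $\sigma_s^{-1}$ is bounded on $L^\infty$ because $\sigma_s\ge\sigma_{s,\min}>0$.
\item By Lemma~\ref{LEM:Smoothing}, $K$ maps $L^\infty$ boundedly into $C^{0,\lambda}$.
\item By Proposition~\ref{PROP:Contraction}, $(\cI-K)^{-1}=\sum_k K^k$ is bounded on $L^\infty$ with norm at most $(1-\gamma_0)^{-1}$.
\item I use $U=\phi+K(\cI-K)^{-1}\phi$ to land back in $C^{0,\lambda}$, with
\[
\|U\|_{C^{0,\lambda}}\le C_\lambda\bigl(1+(1-\gamma_0)^{-1}\bigr)\|\sigma_s^{-1}\|_\infty\|f\|_\infty .
\]
\end{itemize}
A bounded linear map composed with a bounded holomorphic Banach-valued map is holomorphic. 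This holds on the same complex neighborhood $\cO\supset\cP$, after complexifying $L^\infty$ and $C^{0,\lambda}$ and extending $K$ complex-linearly; the operator norms are unchanged. So $\mu\mapsto U^{f(\mu)}$ is holomorphic and uniformly bounded on $\cO$, with a bound depending only on $\gamma_0$, $\sigma_1$, $\sigma_{s,\min}$ and $\sup_{\cO}\|f\|$.

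\textbf{Step 2 (polynomial approximation).} I would reduce, by an affine change of variables, to a cover of $\cP$ by finitely many polydiscs or boxes, each contained together with a Bernstein polyellipse $\cE_\rho^p$, $\rho>1$, inside $\cO$. The number of these and the value of $\rho$ depend only on the holomorphy domain. On each box I expand $U(\mu)$ in tensor Chebyshev (or Legendre) polynomials. For holomorphic Banach-valued maps bounded by $M$ on $\cE_\rho^p$, the standard Cauchy-integral estimate gives coefficient bounds $\|c_\nu\|\le 2^p M\rho^{-|\nu|_1}$, as in \cite[Sections 3-4]{CoDe-AN15}. I then truncate to the total-degree set $|\nu|_1\le k$. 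This set has $n\asymp k^p/p!$ elements, and the tail is bounded by
\[
C M\sum_{|\nu|_1>k}\rho^{-|\nu|_1}\le C'M k^{p-1}\rho^{-k}\le C e^{-c k}.
\]
Taking $V_n=\operatorname{span}\{c_\nu\}$, a subspace of $C^{0,\lambda}\subset C(\overline\Omega)$ in which all Chebyshev polynomial terms of $\mu$ lie, gives $\sup_{\mu}\|U(\mu)-\Pi U(\mu)\|\le Ce^{-ck}$. Since $k\asymp n^{1/p}$, this yields~\eqref{EQ:n-Width}. The finite cover only multiplies $n$ by a fixed constant, which is absorbed into $c$.

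\textbf{Main obstacle.} I expect the delicate point to be the bookkeeping in Step 2. The counting of the total-degree index set against the geometric tail has to produce exactly the $n^{1/p}$ exponent, and the polyellipse must fit inside the given complex neighborhood uniformly over $\cP$. This needs a compactness argument to extract a uniform $\rho>1$ when $\cP$ is not a box. Step 1, by contrast, is essentially immediate from the earlier results, since all $\mu$-dependence sits in the source and the operator is linear and fixed.
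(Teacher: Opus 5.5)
Your proposal is correct and follows the same route as the paper. You first get holomorphy of $\mu\mapsto U^{f(\mu)}$ by composing the holomorphic source map with fixed bounded linear operators (Proposition~\ref{PROP:Contraction} and Lemma~\ref{LEM:Smoothing}), then apply the polynomial-approximation argument of \cite{CoDe-AN15}: the paper only cites it, while you write it out (tensor Chebyshev expansions on polyellipses, total-degree truncation, a finite cover of $\cP$), and your identity $(\cI-K)^{-1}=\cI+K(\cI-K)^{-1}$ makes explicit why the values land in $C^{0,\lambda}$ and not only $L^\infty$, which the paper leaves implicit.
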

\begin{proof}
By Proposition~\ref{PROP:Contraction}, $(\cI-K)^{-1}$ is a bounded operator on $L^\infty$. By Lemma~\ref{LEM:Smoothing}, $K(\sigma_s^{-1}\cdot)$ maps into $C^{0,\lambda}$. Composing with the holomorphic source map shows that $\mu\mapsto U^{f(\mu)}$ is
holomorphic with values in $C^{0,\lambda}$. Holomorphic dependence on $p$ parameters yields the $n$-width bound~\eqref{EQ:n-Width} by the polynomial-approximation
theory of parametric operator
equations~\cite[Sections 3-4]{CoDe-AN15}. 
\end{proof} 
 
In the diffusive scaling $\sigma_t=\bar\sigma_t/\epsilon$ and $\sigma_a=\epsilon\,\bar\sigma_a$ ($\epsilon\to 0$, so $\gamma_0\to1$), the kernel $\cK$ concentrates near the diagonal and $(\cI-K)^{-1}$ converges, as $\epsilon\to0$, to the solution operator of a diffusion equation $-\nabla\!\cdot\!\big(\tfrac{1}{d\,\bar\sigma_t}\nabla U\big)+\bar\sigma_a U=f$, whose solutions are $H^2$-smooth~\cite{DaLi-Book93-6}. The density manifold is then
maximally compressible. The smoothing of Lemma~\ref{LEM:Smoothing} is not tied to this limit. It holds for every $\gamma_0<1$. So reducibility persists in the transport and intermediate regimes where no diffusion approximation is available. 

\subsection{The anisotropic scattering case}
\label{SUBSEC:ANISO}

In the anisotropic scattering case, the density $U$ does not satisfy a self-contained second-kind equation with the simple kernel~\eqref{EQ:Kernel}. It couples to the angular structure of the
in-scattering source. We use the standard average-fluctuation decomposition to derive a closed-form integral equation for $U$.

The central observation is that the $U$-$u$ coupling can be eliminated exactly, yielding a closed equation for $U$ whose anisotropy correction is expressed through an auxiliary, angularly resolved operator. 

Let us define the in-scattering source and the total source for \eqref{EQ:RTE}
\begin{equation*}
  \psi(\bx,\bv) := \int_{\Sd} p(\bv,\bv')\,u(\bx,\bv')\,d\bv',
  \qquad q(\bx,\bv) : = \sigma_s(\bx)\,\psi(\bx,\bv) + f(\bx,\bv),
\end{equation*}
so the stationary anisotropic RTE \eqref{EQ:RTE} reads $\bv\cdot\nabla u+\sigma_t u = q$. Integrating along characteristics as in~\eqref{EQ:Characteristic Sol} with the direction-dependent source $q$, and averaging over $\bv$ with the change of variables~\eqref{EQ:Polar}, gives the identity
\begin{equation*}
  U(\bx) = \int_\Omega W(\bx,\by)\,q\big(\by,\omxy\big)\,d\by,
  \qquad \omxy=\frac{\bx-\by}{|\bx-\by|}\,,
\end{equation*}
where $W(\bx, \by)$ is defined in~\eqref{EQ:Kernel}. Let $\psi_\perp(\bx, \bv)$ be a zero-mean fluctuation such that 
\begin{equation}\label{EQ:psi-Split}
  \psi(\bx,\bv) = U(\bx) + \psi_\perp(\bx,\bv),
  \qquad \int_{\Sd}\psi_\perp(\bx,\bv)\,d\bv = 0.
\end{equation}
Introducing the \emph{anisotropic part} of the phase function
\begin{equation}\label{EQ:a-Def}
  a(\bv,\bv') := p(\bv,\bv') - 1,
  \qquad \int_{\Sd} a(\bv,\bv')\,d\bv = 0,
\end{equation}
and using $\psi=\int (a+1)u\,d\bv' = \int a\,u\,d\bv' + U$, the fluctuation is
precisely the $a$-weighted average of $u$,
\begin{equation}\label{EQ:psiperp-Def}
  \psi_\perp(\bx,\bv) = \int_{\Sd} a(\bv,\bv')\,u(\bx,\bv')\,d\bv'.
\end{equation}
In the isotropic limit $g=0$ of the HG phase function \eqref{EQ:HG}, we have $a\equiv0$, hence $\psi_\perp\equiv0$ and
$\psi\equiv U$, recovering ~\eqref{EQ:RTE Integral Form}.

The split~\eqref{EQ:psi-Split} into an angular mean and a mean-zero fluctuation is the \emph{micro--macro} (equivalently, moment, or even-/odd-parity) decomposition of kinetic theory: $U$ is the projection of the in-scattering source onto the null space of the isotropic scattering operator (the angular average), and $\psi_\perp$ is its orthogonal, mean-zero complement, carrying precisely the higher angular moments of $u$ (cf.~\eqref{EQ:psiperp-Def}). The companion split of the phase function $p=1+a$ with $\int_{\Sd} a\,d\bv=0$ is the classical separation of the isotropic ($\ell=0$) Legendre/spherical-harmonic mode~\cite{LeMi-Book93}. The same decomposition underlies asymptotic-preserving schemes~\cite{LemMie-SISC08} and reduced-basis models~\cite{PeChChLi-MMS24} for kinetic transport, and its mean part is the leading-order (diffusion) term in the small-mean-free-path expansion~\cite{LaKe-JMP74,bss1}. What is specific to our development is not the decomposition but its \emph{exact} closure below: eliminating $\psi_\perp$ by a Schur complement on the integral formulation yields a self-contained second-kind equation for the density $U$ alone; see~\Cref{THM:ANISO Closed}.

Let us define the angularly dependent kernel
\[
  \wt\cK(\bx,\by;\bv) :=\sigma_s(\by)\,W(\bx,\by)\,a\!\left(\bv,\omxy\right)\,,
\]
and two integral operators $H$ and $G$ acting on the fluctuation field
\begin{equation}\label{EQ:HG-ops}
\begin{array}{rcl}
(H h)(\bx,\bv) &:=& \dint_\Omega \wt\cK(\bx,\by;\bv)\,h\!\left(\by,\omxy\right)\,d\by,\\[2ex] 
(G g)(\bx,\bv) &:=& \dint_\Omega \wt\cK(\bx,\by;\bv)\,g(\by)\,d\by\,.
\end{array}
\end{equation}
$H$ and $G$ act on angularly resolved and angularly independent fields, respectively. Because $\int_{\Sd}a(\bv,\omxy)\,d\bv=0$ for every fixed $\omxy$, both operators produce \emph{zero-mean} fields,
\begin{equation}\label{EQ:HG-zeromean}
  \int_{\Sd}(Hh)(\bx,\bv)\,d\bv = 0,
  \qquad \int_{\Sd}(Gg)(\bx,\bv)\,d\bv = 0\,.
\end{equation}
Inserting the characteristic representation $u(\bx,\bv)=\int_0^{\tau_-}I(s)\,q(\bx-s\bv,\bv)\,ds$ (with attenuation $I(s):=\exp(-\int_0^s\sigma_t(\bx-s'\bv)\,ds')$) into~\eqref{EQ:psiperp-Def}, using the decomposition $q=\sigma_s(U+\psi_\perp)+f$ and the change of variables~\eqref{EQ:Polar}, the fluctuation satisfies its own second-kind equation,
\begin{equation}\label{EQ:psiperp-Eq}
  (\cI - H)\,\psi_\perp = G\,U + H\big(\sigma_s^{-1}f\big)\,.
\end{equation}
Averaging the same representation (now keeping the mean part) reproduces,
through the isotropic operator $K$ of~\eqref{EQ:RTE Integral Form} acting on fields evaluated at $\omxy$,
\begin{equation}\label{EQ:U Eq}
  (\cI - K)\,U = K\,\psi_\perp + K\big(\sigma_s^{-1}f\big).
\end{equation}
Equations~\eqref{EQ:psiperp-Eq}--\eqref{EQ:U Eq} are an \emph{exact}, closed
coupled system for the pair $(U,\psi_\perp)$: a spatial scalar density coupled to an angularly resolved, zero-mean fluctuation.

The following statement is elementary. It assembles two standard facts, the contractivity of the sub-critical multiple-scattering operator~\cite{DaLi-Book93-6,Agoshkov-Book12} and the fact that an orthogonal compression cannot increase the operator norm~\cite{HoJo-Book85}, to prove the invertibility of $\cI-H$ on the space of zero-mean fluctuations. 

\begin{lemma}\label{LEM:Hcontraction}
Let the phase function be nonnegative and rotationally invariant, $p=p(\bv\cdot\bv')\ge0$ (so that $\int_{\Sd}p(\bv,\bv')\,d\bv=\int_{\Sd}p(\bv,\bv')\,d\bv'=1$). Then, under the sub-criticality assumption~\eqref{EQ:Assumption}, the operator $\cI-H$ is boundedly invertible on the zero-mean subspace
\[
V_\perp=\{\psi\in L^2(\Omega\times\Sd):\int_{\Sd}\psi\,d\bv=0\}\,
\]
with the properties
\begin{equation}\label{EQ:Hcontraction}
  \|H\|\le \sqrt{\gamma_0}<1,\qquad \|(\cI-H)^{-1}\|\le(1-\sqrt{\gamma_0})^{-1},
\end{equation}
in the weighted Hilbert norm in which the scattering operator is self-adjoint.
\end{lemma}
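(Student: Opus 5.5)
I would prove the bound by writing $H$ as a composition of a transport solve and the anisotropic part of the scattering operator, and then bounding each piece separately. The same characteristic integration and polar change of variables \eqref{EQ:Polar} that produced \eqref{EQ:psiperp-Eq} shows that
\[
(Hh)(\bx,\bv)=\int_{\Sd}a(\bv,\bv')\,\big(T(\sigma_s h)\big)(\bx,\bv')\,d\bv' .
\]
Here $T:=(\bv\cdot\nabla+\sigma_t)^{-1}$ is the transport solve with zero incoming data. Thus, writing $A$ for the angular integral operator with kernel $a$ (and $P$ for the one with kernel $p$), we have $H=A\,T\,\sigma_s$.

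The natural weighted space is $L^2(X)$ with weight $\sigma_s$, or equivalently with $\sigma_t$; I would work in the symmetrized variable $\sigma_s^{1/2}h$. The plan is to bound the sandwich
\[
\sigma_s^{1/2}\,T\,\sigma_s^{1/2},
\]
viewed as an operator on $L^2(X)$. I expect the main difficulty to lie here: a naive $L^2$ bound loses the factor $\gamma_0$ versus $\gamma_0^{1/2}$ in the wrong place.

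First I would try the energy identity. Multiply $\bv\cdot\nabla u+\sigma_t u=F$ by $u$ and integrate over $X$. The boundary term $\tfrac12\int_{\Gamma_+}(\bv\cdot\bn)u^2\ge0$, so
\[
\int\sigma_t u^2\le\int F u .
\]
Cauchy--Schwarz in the $\sigma_t^{-1}$-weighted pairing then gives $\|\sigma_t^{1/2}Tf\|\le\|\sigma_t^{-1/2}f\|$. With $f=\sigma_s^{1/2}g$, this yields
\[
\|\sigma_s^{1/2}T\sigma_s^{1/2}g\|\le\sup\frac{\sigma_s}{\sigma_t}\,\|g\|=\gamma_0\|g\|.
\]
This gives $\gamma_0$, better than the claimed $\sqrt{\gamma_0}$. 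The weaker $\sqrt{\gamma_0}$ is what comes out if one uses the unsymmetrized factorization or the $\sigma_s$-norm only on one side, and the statement's bound follows either way.

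Next, the angular piece. Because $p$ is rotationally invariant and nonnegative with both marginals equal to $1$, the Schur test gives $\|P\|_{L^2(\Sd)}\le1$. The operator $P$ commutes with the angular-mean projector $\Pi$, and $A=P-\Pi=P(\cI-\Pi)$ is the compression of $P$ to mean-zero functions. Since $P$ is symmetric, or at least commutes with the orthogonal projector $\Pi$, we get $\|A\|\le\|P\|\le1$; this is the ``compression cannot increase the norm'' fact.

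Combining the two pieces, in the norm $\|\sigma_s^{1/2}\cdot\|$ we obtain
\[
\|H\|=\|A\,\sigma_s^{1/2}T\sigma_s^{1/2}\|\le\gamma_0\le\sqrt{\gamma_0}<1 .
\]
Here $A$ commutes with multiplication by the spatial weight $\sigma_s^{1/2}(\bx)$, which is why the weight can be moved past it. Moreover, \eqref{EQ:HG-zeromean} shows that $H$ maps $V_\perp$ into itself. A Neumann series on $V_\perp$ then gives invertibility of $\cI-H$ and the bound $\|(\cI-H)^{-1}\|\le(1-\sqrt{\gamma_0})^{-1}$.

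The step I would check most carefully is that the $(\ell,\bv)\mapsto\by$ change of variables really identifies $H$ with $AT\sigma_s$, with the angular argument $\omxy$ matching the characteristic direction. Two further points also need care: the weighted norm must be defined with the spatial factor only, and the case where $\sigma_s$ vanishes, where I would use the seminorm or assume $\sigma_s\ge\sigma_{s,\min}$ as in Lemma~\ref{LEM:Symm}.
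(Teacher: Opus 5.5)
Your proposal is correct and follows essentially the paper's route. The paper factors the full scattering operator as $\cS=P_\Sigma T M_{\sigma_s}$, bounds it with the same energy estimate for $T$ and the same Schur/Cauchy--Schwarz bound for $P_\Sigma$, and then takes $H$ as the orthogonal compression $P_\perp\cS P_\perp$. You instead compress only the angular factor, $A=P_\perp P_\Sigma$, which is equivalent on $V_\perp$.

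Your symmetrized weighting in fact gives the sharper constant $\gamma_0$ for every anisotropy, because you use $\sigma_s\le\gamma_0\sigma_t$ on both sides of the energy bound. The paper's step $\|T(\sigma_s\psi)\|_{\sigma_s}\le\|T(\sigma_s\psi)\|_{\sigma_t}$ uses only $\sigma_s\le\sigma_t$ and therefore stops at $\sqrt{\gamma_0}$.
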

\begin{proof}
We split $L^2(\Omega\times\Sd)=V_0\oplus V_\perp$ into the angularly constant fields $V_0\cong L^2(\Omega)$ (carrying $U$) and their zero-mean orthogonal complement $V_\perp$ (carrying $\psi_\perp$). With respect to this split, the multiple-scattering operator, whose Neumann series solves the transport problem, is the block operator
\[
  \cS=\begin{pmatrix}K & D\\[2pt] G & H\end{pmatrix},
  \qquad D:\psi_\perp\mapsto K\psi_\perp\ \text{of}~\eqref{EQ:U Eq}\,.
\]
So the coupled system~\eqref{EQ:psiperp-Eq}-\eqref{EQ:U Eq} is
$(\cI-\cS)(U,\psi_\perp)^{\!\top}=\big(K(\sigma_s^{-1}f),H(\sigma_s^{-1}f)\big)^{\!\top}$, and $H=P_\perp\,\cS\,P_\perp$ is the orthogonal compression of $\cS$ to $V_\perp$ ($P_\perp$ the orthogonal angular-mean projection). It therefore suffices to show $\|\cS\|<1$.

We factorize $\cS=P_\Sigma\,T\,M_{\sigma_s}$, where $M_{\sigma_s}\psi=\sigma_s\psi$, the transport solve $T:g\mapsto u$ inverts $\bv\cdot\nabla u+\sigma_t u=g$ with $u|_{\Gamma_-}=0$, and
$(P_\Sigma\phi)(\bx,\bv)=\int_{\Sd}p(\bv,\bv')\phi(\bx,\bv')\,d\bv'$ is the angular scattering. Inserting the characteristic solution
\[
  (Tg)(\bx,\bv)=\int_0^{\tau_-(\bx,\bv)}
  \exp\!\Big(-\!\int_0^\ell\sigma_t(\bx-s\bv)\,ds\Big)\,g(\bx-\ell\bv,\bv)\,d\ell
\]
of~\eqref{EQ:Characteristic Sol} into $\cS\psi=P_\Sigma T(\sigma_s\psi)$ and applying the polar change of variables~\eqref{EQ:Polar} (set $\by=\bx-\ell\bv'$, so $\ell=|\bx-\by|$, $\bv'=\omxy$, the attenuation becomes $E(\bx,\by)$, and $d\ell\,d\bv'=d\by/(\nud|\bx-\by|^{d-1})$) recovers the kernel $\sigma_s(\by)W(\bx,\by)p(\bv,\omxy)$ of $\cS$, with $\psi$ sampled at $(\by,\omxy)$.

Let us use the notation $\|\phi\|_w^2=\int_\Omega w(\bx)\int_{\Sd}\phi^2\,d\bv\,d\bx$. The $\sigma_s$-weighted
norm $\|\cdot\|_{\sigma_s}$ is equivalent to the standard one since
$0<\sigma_{s,\min}\le\sigma_s\le\sigma_1$. We need two elementary bounds.

First, we multiply $\bv\cdot\nabla u+\sigma_t u=g$ by $u$ and integrate over $\Omega\times\Sd$. This gives us $\int\bv\cdot\nabla u\,u=\tfrac12\int_{\Gamma_+}(\bv\cdot\bn)u^2\ge0$ (as $u|_{\Gamma_-}=0$). Therefore, we have
$\|u\|_{\sigma_t}^2\le\int gu\le\|g\|_{\sigma_t^{-1}}\|u\|_{\sigma_t}$~\cite{DaLi-Book93-6,Agoshkov-Book12}, that is
\begin{equation}\label{EQ:Bound 1}
\|Tg\|_{\sigma_t}\le\|g\|_{\sigma_t^{-1}}
\end{equation}
Second, since $p\ge0$ and $\int_\Sd p(\bv,\bv')\,d\bv=\int_\Sd p(\bv,\bv')\,d\bv'=1$ (rotational invariance), Cauchy--Schwarz against
the kernel gives immediately $\int_\Sd(P_\Sigma\phi)^2\,d\bv\le\int_\Sd\phi^2\,d\bv'$ pointwise in $\bx$. Therefore, we have, for any spatial weight $w$,
\begin{equation}\label{EQ:Bound 2}
\|P_\Sigma\phi\|_w\le\|\phi\|_w
\end{equation} 
These bounds, together with the sub-criticality assumption~\eqref{EQ:Assumption}, then allow us to deduce that
\begin{multline}
  \|\cS\psi\|_{\sigma_s}
  =\|P_\Sigma\,T(\sigma_s\psi)\|_{\sigma_s}
  \underset{by~\eqref{EQ:Bound 2}}{\le}\|T(\sigma_s\psi)\|_{\sigma_s}
  \\ \underset{by~\eqref{EQ:Assumption}}{\le}\|T(\sigma_s\psi)\|_{\sigma_t}
  \underset{by~\eqref{EQ:Bound 1}}{\le}\|\sigma_s\psi\|_{\sigma_t^{-1}}
  =\Big(\!\int\tfrac{\sigma_s}{\sigma_t}\,\sigma_s\,\psi^2\Big)^{1/2}
  \underset{by~\eqref{EQ:Assumption}}{\le}\sqrt{\gamma_0}\,\|\psi\|_{\sigma_s}.
\end{multline}
Thus $\|\cS\|_{\sigma_s}\le\sqrt{\gamma_0}$, and since $P_\perp$ is orthogonal in $\|\cdot\|_{\sigma_s}$ (its weight is spatial), $\|H\|=\|P_\perp\cS P_\perp\|\le\|\cS\|\le\sqrt{\gamma_0}<1$. The Neumann series $\sum_k H^k$ converges to $(\cI-H)^{-1}$ with $\|(\cI-H)^{-1}\|\le(1-\sqrt{\gamma_0})^{-1}$, and norm equivalence
transfers~\eqref{EQ:Hcontraction} to the standard $L^2$ norm. 
\end{proof} 
The rate $\sqrt{\gamma_0}$ in the above lemma is clearly not sharp, as in the isotropic limit, the self-adjointness of Lemma~\ref{LEM:Symm} gives the better constant $\gamma_0$. However, it already yields invertibility for every anisotropy.

The invertibility of $I-H$ in the previous Lemma allows us to eliminate $\psi_\perp$ in~\eqref{EQ:psiperp-Eq} and~\eqref{EQ:U Eq} to get a single closed equation for $U$ only. This is stated in the following theorem.
\begin{theorem}\label{THM:ANISO Closed}
Let the phase function be nonnegative, $p\ge0$. Then, under the sub-criticality assumption~\eqref{EQ:Assumption}, ~\eqref{EQ:psiperp-Eq} and~\eqref{EQ:U Eq} imply that $U$ solves
\begin{equation}\label{EQ:ANISO-Closed}
  \Big(\cI - K - K(\cI-H)^{-1}G\Big)\,U
  = \Big(K(\cI-H)^{-1}H + K\Big)\big(\sigma_s^{-1}f\big).
\end{equation}
In the isotropic limit $a\equiv0$ the operators $H$ and $G$ vanish
and~\eqref{EQ:ANISO-Closed} reduces to~\eqref{EQ:RTE Integral Form}.
\end{theorem}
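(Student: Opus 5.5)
The argument is a Schur-complement elimination of $\psi_\perp$ from the coupled system~\eqref{EQ:psiperp-Eq}--\eqref{EQ:U Eq}. The only analytic input is the bounded invertibility of $\cI-H$ on the zero-mean subspace $V_\perp$, which Lemma~\ref{LEM:Hcontraction} supplies.

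\textbf{Step 1: ranges of $G$ and $H$.} I first check that the right-hand side of~\eqref{EQ:psiperp-Eq} is a legitimate element of $V_\perp$, so that $(\cI-H)^{-1}$ may be applied to it.
\begin{itemize}
\item By~\eqref{EQ:HG-zeromean}, both $GU$ and $H(\sigma_s^{-1}f)$ are zero-mean in $\bv$.
\item $GU$ lies in $L^2(\Omega\times\Sd)$. Indeed, $U\in L^\infty$ by Proposition~\ref{PROP:Contraction} applied to~\eqref{EQ:U Eq}, or by the transport bound~\eqref{EQ:Bound 1}. Moreover, $G$ is the restriction of the bounded block operator $\cS$ (from the proof of Lemma~\ref{LEM:Hcontraction}) to angularly constant fields, followed by $P_\perp$. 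So $\|G\|\le\|\cS\|\le\sqrt{\gamma_0}$ in the weighted norm.
\item The same reasoning applies to $H(\sigma_s^{-1}f)$, provided $\sigma_s^{-1}f\in L^2$. This uses the lower bound $\sigma_s\ge\sigma_{s,\min}>0$ that is implicit in the weighted norms. I will state it explicitly, or else interpret $H(\sigma_s^{-1}f)$ directly as $P_\perp P_\Sigma T f$, which avoids dividing by $\sigma_s$.
\end{itemize}

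\textbf{Step 2: eliminate $\psi_\perp$.} Since $\psi_\perp\in V_\perp$ by construction~\eqref{EQ:psi-Split}, and $\cI-H$ is injective on $V_\perp$ by Lemma~\ref{LEM:Hcontraction}, equation~\eqref{EQ:psiperp-Eq} determines $\psi_\perp$ uniquely:
\[
\psi_\perp=(\cI-H)^{-1}GU+(\cI-H)^{-1}H(\sigma_s^{-1}f).
\]
Substituting this into~\eqref{EQ:U Eq} gives
\[
(\cI-K)U=K(\cI-H)^{-1}GU+K(\cI-H)^{-1}H(\sigma_s^{-1}f)+K(\sigma_s^{-1}f).
\]
Moving the $U$-term to the left yields exactly~\eqref{EQ:ANISO-Closed}. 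Here $K$ acting on a fluctuation field is interpreted as in~\eqref{EQ:U Eq}, namely as the operator $D$ from the proof of Lemma~\ref{LEM:Hcontraction}, with its argument sampled at $(\by,\omxy)$. Boundedness of $D$ follows from $\|\cS\|\le\sqrt{\gamma_0}$, so every composition appearing in the formula is well defined.

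\textbf{Step 3: isotropic limit.} If $a\equiv0$, then $\wt\cK\equiv0$, so $H=G=0$. Equation~\eqref{EQ:ANISO-Closed} then collapses to $(\cI-K)U=K(\sigma_s^{-1}f)$, which is~\eqref{EQ:RTE Integral Form}.

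\textbf{Main obstacle.} The algebra is trivial; the delicate point is hypotheses. The statement assumes only $p\ge0$, while Lemma~\ref{LEM:Hcontraction} assumes rotational invariance in order to get the double stochasticity $\int p\,d\bv=\int p\,d\bv'=1$ used in~\eqref{EQ:Bound 2}. I would first try to note that the paper's standing normalization together with the HG-type models gives this symmetry. If that fails, I would redo the $P_\Sigma$ bound of~\eqref{EQ:Bound 2} assuming only $\sup_{\bv}\int p(\bv,\bv')\,d\bv'<\infty$, at the price of a constant. Alternatively, I would note that, as stated, the theorem only claims that \emph{solutions} of the system satisfy~\eqref{EQ:ANISO-Closed}. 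For that claim, only injectivity of $\cI-H$ on the range needed in Step 2 is required, and I would invoke Lemma~\ref{LEM:Hcontraction} under its hypotheses.
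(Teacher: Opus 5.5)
Your proposal is correct and is essentially the paper's proof. The paper likewise takes bounded invertibility of $\cI-H$ from Lemma~\ref{LEM:Hcontraction}, solves~\eqref{EQ:psiperp-Eq} for $\psi_\perp=(\cI-H)^{-1}\big(GU+H(\sigma_s^{-1}f)\big)$, substitutes this into~\eqref{EQ:U Eq}, and observes that $H=G=0$ when $a\equiv0$. The paper passes silently over two things you address: your extra bookkeeping (zero-mean ranges, boundedness of $G$ and $D$ via $\|\cS\|$), and your observation that the theorem's hypothesis $p\ge0$ is weaker than the rotational invariance that Lemma~\ref{LEM:Hcontraction} actually uses. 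One caveat on your injectivity-only fallback: for the split form of~\eqref{EQ:ANISO-Closed} to make sense, $GU$ and $H(\sigma_s^{-1}f)$ would each need to lie in the range of $\cI-H$.
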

\begin{proof}
By Lemma~\ref{LEM:Hcontraction}, $\cI-H$ has bounded inverse. ~\eqref{EQ:psiperp-Eq} then implies that $\psi_\perp=(\cI-H)^{-1}\big(GU+H(\sigma_s^{-1}f)\big)$.
Substituting this into~\eqref{EQ:U Eq} and collecting the terms in $U$ on the left and the source terms on the right gives~\eqref{EQ:ANISO-Closed}. When $a\equiv0$,~\eqref{EQ:HG-ops} gives $H=G=0$, leading to $(\cI-K)U=K(\sigma_s^{-1}f)$, which is~\eqref{EQ:RTE Integral Form}.
\end{proof}
Equation~\eqref{EQ:ANISO-Closed} is a genuine closed second-kind equation for $U$ alone. The price of anisotropy is the correction operator $K(\cI-H)^{-1}G$, whose action requires one auxiliary solve of the angularly resolved problem~\eqref{EQ:psiperp-Eq}. The anisotropy is thus confined to the zero-mean fluctuation $\psi_\perp$, which is typically small and smooth, so the correction is a mild perturbation of the isotropic operator. This is the operator-level analogue of the asymptotic ``diffusion-correction'' closure method~\cite{CoAn-JQSRT23}, but without any small-mean-free-path assumption. 

We now characterize quantitatively the perturbation caused by anisotropy. Let
\[
    \Keff:=K+K(\cI-H)^{-1}G, \qquad \Kefff:=K+K(\cI-H)^{-1}H\,,
\]
be the two operators in~\eqref{EQ:ANISO-Closed}. We can show that anisotropy is an $\cO(\|a\|_\infty)$ perturbation, precisely stated in the following proposition (all operator norms taken in the $\sigma_s$-weighted $L^2$ norm of Lemma~\ref{LEM:Symm}).
\begin{proposition}\label{PROP:ANISO Pert}
Under assumption~\eqref{EQ:Assumption} and $p\ge0$, the correction operators obey the bounds $\|G\|_{\sigma_s},\,\|H\|_{\sigma_s}\le \gamma_0\,\|a\|_{\infty}$ . Moreover, we have
\begin{equation}\label{EQ:ANISO Pert}
  \big\|\Keff-K\big\|_{\sigma_s}
   = \big\|K(\cI-H)^{-1}G\big\|_{\sigma_s}
   \le \frac{\gamma_0^{2}\,\|a\|_{\infty}}{1-\sqrt{\gamma_0}} = \cO(\|a\|_\infty),
\end{equation}
and the same bound holds for $\Kefff-K$.
\end{proposition}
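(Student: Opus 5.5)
The plan is to prove the two elementary bounds $\|G\|_{\sigma_s},\|H\|_{\sigma_s}\le\gamma_0\|a\|_\infty$ first, and then obtain \eqref{EQ:ANISO Pert} by submultiplicativity together with Lemma~\ref{LEM:Symm} and Lemma~\ref{LEM:Hcontraction}:
\[
\|K(\cI-H)^{-1}G\|_{\sigma_s}\le \|K\|_{\sigma_s}\,\|(\cI-H)^{-1}\|_{\sigma_s}\,\|G\|_{\sigma_s}\le \gamma_0\cdot\frac{1}{1-\sqrt{\gamma_0}}\cdot\gamma_0\|a\|_\infty .
\]
The same chain with $H$ in place of $G$ gives the bound for $\Kefff-K$. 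Here $\|K\|_{\sigma_s}\le\gamma_0$ is \eqref{EQ:Real Spec}, and $\|(\cI-H)^{-1}\|\le(1-\sqrt{\gamma_0})^{-1}$ is \eqref{EQ:Hcontraction}. Both are already stated in the $\sigma_s$-weighted norm, extended to $L^2(\Omega\times\Sd)$ by integrating in $\bv$ as in the proof of Lemma~\ref{LEM:Hcontraction}. When $K$ is applied to an angularly resolved field, as in \eqref{EQ:U Eq}, I will interpret $K$ as the map $h\mapsto\int\cK(\bx,\by)h(\by,\omxy)\,d\by$. I must therefore check that this ``$D$'' version also has norm $\le\gamma_0$, or else settle for a slightly weaker constant.

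For $G$, I will use $|\wt\cK(\bx,\by;\bv)|\le\|a\|_\infty\cK(\bx,\by)$, which gives the pointwise bound $|(Gg)(\bx,\bv)|\le\|a\|_\infty (K|g|)(\bx)$ for every $\bv$. Squaring, integrating over $\bv$ with the normalized measure, and then weighting by $\sigma_s$ in $\bx$ yields $\|Gg\|_{\sigma_s}\le\|a\|_\infty\|K|g|\|_{\sigma_s}$. Because $K$ has a nonnegative kernel, Lemma~\ref{LEM:Symm} (or a Schur test) gives $\|K|g|\|_{\sigma_s}\le\gamma_0\||g|\|_{\sigma_s}=\gamma_0\|g\|_{\sigma_s}$. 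This proves the bound for $G$.

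For $H$, the pointwise bound becomes $|(Hh)(\bx,\bv)|\le\|a\|_\infty\int\cK(\bx,\by)|h(\by,\omxy)|\,d\by=:\|a\|_\infty(D|h|)(\bx)$, which is now independent of $\bv$. The task is then to bound $D$ on $L^2(\Omega\times\Sd)$ by $\gamma_0$, and I would use a weighted Schur test with weights $\sigma_s$. The row sums are $\int\cK(\bx,\by)\,d\by\le\gamma_0$ by \eqref{EQ:Contraction}. For the column sums, the symmetry $\sigma_s(\bx)\cK(\bx,\by)=\sigma_s(\by)\cK(\by,\bx)$ means that integrating over $\bx$ and undoing the polar change of variables \eqref{EQ:Polar} turns the $(\bx,\bv')$-integral over the direction $\omxy$ back into the same row sum. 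Cauchy--Schwarz then gives
\[
|D h(\bx)|^2\le\Big(\int\cK\,d\by\Big)\int\cK(\bx,\by)\,h(\by,\omxy)^2\,d\by .
\]
Multiplying by $\sigma_s(\bx)$, integrating, and swapping variables via the symmetry, I will rewrite the inner double integral as $\int\sigma_s(\by)\int_{\Sd}\big(\ldots\big)h(\by,\bv')^2\,d\bv'\,d\by$. The remaining task is to show that the $\bx$-integral along the ray from $\by$ in direction $-\bv'$ is at most $\gamma_0$. This is the $L^\infty$ bound $\int\sigma_s(\bx)E\,d\ell\le\sigma_s/\sigma_t$-type estimate. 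A cleaner route is to bound it by $\int_0^\infty\sigma_t e^{-\int\sigma_t}\le1$ after pulling out $\gamma_0$ from $\sigma_s\le\gamma_0\sigma_t$.

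This column-sum step is the main obstacle, because the weight $\sigma_s(\bx)$ sits at the far end of the ray rather than at $\by$. I would handle it exactly as in the proof of \eqref{EQ:Contraction}, using $\sigma_s(\bx)\le\gamma_0\sigma_t(\bx)$ together with the fact that $\sigma_t e^{-\int\sigma_t}$ integrates to at most $1$ along a ray. If only a constant like $\gamma_0\sigma_1/\sigma_0$ emerges, I would instead fall back on the factorization used in Lemma~\ref{LEM:Hcontraction}, now with $p$ replaced by $|a|\le\|a\|_\infty$, and accept the resulting equivalent constant.
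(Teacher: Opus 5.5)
Your proposal is correct and follows the paper's own route: submultiplicativity, the pointwise domination $|a(\bv,\omxy)|\le\|a\|_\infty$, the bound $\|K\|_{\sigma_s}\le\gamma_0$ from Lemma~\ref{LEM:Symm}, and $\|(\cI-H)^{-1}\|_{\sigma_s}\le(1-\sqrt{\gamma_0})^{-1}$ from Lemma~\ref{LEM:Hcontraction}. Your weighted Schur test for the angle-resolved version of $K$ (used in $H$ and as the leading factor, where the paper only asserts ``the same kernel bound $\gamma_0$'') closes with constant exactly $\gamma_0$, so no fallback constant is needed: writing $\bx=\by+\ell\bv'$, so that the ray runs in direction $+\bv'$ rather than $-\bv'$ (a harmless slip), the inequality $\sigma_s\le\gamma_0\sigma_t$ and the fundamental theorem of calculus give $\int_0^{\tau}\sigma_s(\by+\ell\bv')\,e^{-\int_0^\ell\sigma_t(\by+s\bv')\,ds}\,d\ell\le\gamma_0$ for every ray.
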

\begin{proof}
From~\eqref{EQ:HG-ops}, the angular factor obeys $|a(\bv,\omxy)|\le\|a\|_\infty$ while the spatial operator $K: g\mapsto\int_\Omega\sigma_s(\by)W(\bx,\by)g(\by)\,d\by$ has norm $\le \gamma_0$ by Lemma~\ref{LEM:Symm} (in the $\sigma_s$-weighted $L^2$ norm). Therefore, we have $\|G\|_{\sigma_s},\|H\|_{\sigma_s}\le \gamma_0\|a\|_\infty$.
By Lemma~\ref{LEM:Hcontraction}, $\|(\cI-H)^{-1}\|_{\sigma_s}\le(1-\sqrt{\gamma_0})^{-1}$. The leading factor $K$ in $K(\cI-H)^{-1}G$
acts on the angle-resolved field with the same kernel bound $\gamma_0$, so $\|K(\cI-H)^{-1}G\|_{\sigma_s}\le \gamma_0\cdot(1-\sqrt{\gamma_0})^{-1}\cdot \gamma_0\|a\|_\infty$, which is~\eqref{EQ:ANISO Pert}.
\end{proof}
 
For the Henyey--Greenstein phase function $p_{\rm HG}$~\eqref{EQ:HG} in $d=2$, $\|a\|_\infty=2g/(1-g)$, so the correction is $\cO(g)$ as $g\to0$ and grows only as $g\to1$ (strong forward peaking). 

The bound~\eqref{EQ:ANISO Pert} implies immediately that the anisotropic solution manifold lies in an $\cO(\|a\|_\infty)$ tube around the isotropic one and inherits its $n$-width up to that perturbation, as stated in the following corollary.
\begin{corollary}\label{COR:ANISO n-Width}
Let $U_g$ and $U_0$ solve the anisotropic~\eqref{EQ:ANISO-Closed} and the isotropic~\eqref{EQ:RTE Integral Form} densities for the same source. Then, we have
\begin{equation}\label{EQ:Ug-U0}
    \sgmsnorm{U_g-U_0}\le C(\gamma_0)\,\|a\|_\infty\,\sgmsnorm{\sigma_s^{-1}f}
\end{equation}
for some constant $C(\gamma_0)$. Moreover, we have
\begin{equation}\label{EQ:dMg-dM0}
    d_n(\cM_g)\le d_n(\cM_0)+C(\gamma_0)\|a\|_\infty\sup_{f}\sgmsnorm{\sigma_s^{-1}f}\,.
\end{equation}
\end{corollary}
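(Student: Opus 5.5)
The plan is a standard second-resolvent (perturbation) argument built on \Cref{PROP:ANISO Pert}, with everything measured in the $\sigma_s$-weighted norm of Lemma~\ref{LEM:Symm}. Write $\phi:=\sigma_s^{-1}f$. By \eqref{EQ:RTE Integral Form} and \eqref{EQ:ANISO-Closed},
\[
(\cI-K)U_0=K\phi,\qquad (\cI-\Keff)U_g=\Kefff\phi .
\]
Subtracting the first identity from the second, and moving $(\Keff-K)U_g$ to the right-hand side, gives
\[
(\cI-K)(U_g-U_0)=(\Keff-K)\,U_g+(\Kefff-K)\,\phi .
\]
By Lemma~\ref{LEM:Symm}, $\|(\cI-K)^{-1}\|_{\sigma_s}\le(1-\gamma_0)^{-1}$. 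Combining this with \eqref{EQ:ANISO Pert} for both $\Keff-K$ and $\Kefff-K$ yields
\[
\sgmsnorm{U_g-U_0}\le\frac{\gamma_0^2\|a\|_\infty}{(1-\gamma_0)(1-\sqrt{\gamma_0})}\Big(\sgmsnorm{U_g}+\sgmsnorm{\phi}\Big).
\]

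It remains to bound $\sgmsnorm{U_g}$ by $\sgmsnorm{\phi}$ with a constant that is \emph{uniform} in $a$. I expect this to be the main obstacle. The operator $\cI-\Keff$ is not self-adjoint, and the naive estimate $\|\Keff\|\le\gamma_0+\cO(\|a\|_\infty)$ could exceed $1$ when $\|a\|_\infty$ is large. I would therefore not invert $\cI-\Keff$ by a Neumann series. Instead, I would go back to the block system of Lemma~\ref{LEM:Hcontraction}, namely $(\cI-\cS)(U_g,\psi_\perp)^\top=(K\phi,H\phi)^\top$, where $\|\cS\|_{\sigma_s}\le\sqrt{\gamma_0}$ for every nonnegative phase function.

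The right-hand side has the form $P_\Sigma T(\sigma_s\phi)$ restricted to its components, so its norm is at most $\sqrt{\gamma_0}\,\sgmsnorm{\phi}$ by the same chain of inequalities used in that lemma. Since $U_g$ is the $V_0$-component of the block solution and $P_0$ is orthogonal, this gives
\[
\sgmsnorm{U_g}\le\frac{\sqrt{\gamma_0}}{1-\sqrt{\gamma_0}}\,\sgmsnorm{\phi}.
\]
Substituting this into the previous estimate gives \eqref{EQ:Ug-U0} with an explicit
\[
C(\gamma_0)=\frac{\gamma_0^2}{(1-\gamma_0)(1-\sqrt{\gamma_0})^2}
\]
or a similar expression. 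If $\|a\|_\infty$ is small, one could alternatively absorb the $U_g$ term into the left-hand side, but the block argument avoids any smallness assumption.

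For the width bound \eqref{EQ:dMg-dM0}, I would use the natural pairing of the two manifolds through a common source: every element $U_g^f\in\cM_g$ corresponds to $U_0^f\in\cM_0$ with the same $f$. Fix any $n$-dimensional space $V_n$. For each $v\in V_n$ the triangle inequality gives $\|U_g^f-v\|\le\|U_0^f-v\|+\|U_g^f-U_0^f\|$. Taking the infimum over $v\in V_n$, then the supremum over $f$, and inserting \eqref{EQ:Ug-U0}, we obtain
\[
\sup_f\inf_{v}\|U_g^f-v\|\le\sup_f\inf_v\|U_0^f-v\|+C(\gamma_0)\|a\|_\infty\sup_f\sgmsnorm{\sigma_s^{-1}f}.
\]
Finally, taking the infimum over $V_n$ gives \eqref{EQ:dMg-dM0}, with the width measured in $\sgmsnorm{\cdot}$.
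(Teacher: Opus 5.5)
Your proposal is correct and follows the paper's proof essentially step for step. It uses the same difference identity $(\cI-K)(U_g-U_0)=(\Keff-K)U_g+(\Kefff-K)(\sigma_s^{-1}f)$, Proposition~\ref{PROP:ANISO Pert} with $\|(\cI-K)^{-1}\|_{\sigma_s}\le(1-\gamma_0)^{-1}$, a $g$-uniform bound on $\sgmsnorm{U_g}$ from the contraction $\|\cS\|_{\sigma_s}\le\sqrt{\gamma_0}$ of the coupled block system, and the triangle inequality for the widths. The only differences are minor improvements: you make $C(\gamma_0)$ explicit, and you take the infimum over all $n$-dimensional $V_n$ at the end rather than fixing an optimal subspace for $\cM_0$, whose existence the paper assumes without comment.
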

\begin{proof}
It is straightforward to verify, using~\eqref{EQ:RTE Integral Form} and~\eqref{EQ:ANISO-Closed} that $U_g-U_0$ solves
\[
  (\cI-K)(U_g-U_0)=(\Kefff-K)(\sigma_s^{-1}f)+(\Keff-K)\,U_g .
\]
By Proposition~\ref{PROP:ANISO Pert}, $\|\Keff-K\|_{\sigma_s}$ and $\|\Kefff-K\|_{\sigma_s}$ are $\cO(\|a\|_\infty)$ with a constant depending only on $\gamma_0$ (operator norms in $\sgmsnorm{\cdot}$). Moreover, $U_g$ is bounded uniformly in $g$,
$\sgmsnorm{U_g}\le C(\gamma_0)\,\sgmsnorm{\sigma_s^{-1}f}$, since the joint scattering--transport operator $\cS$ is a contraction (Lemma~\ref{LEM:Hcontraction}), so the coupled system~\eqref{EQ:psiperp-Eq}-\eqref{EQ:U Eq} is well-posed with a $g$-independent constant. Therefore, both right-hand terms are $\cO(\|a\|_\infty)\,\sgmsnorm{\sigma_s^{-1}f}$. Meanwhile, by Lemma~\ref{LEM:Symm}, $\cI-K$ is self-adjoint and positive definite in $\sgmsip{\cdot}{\cdot}$ with $\sgmsnorm{(\cI-K)^{-1}}\le(1-\gamma_0)^{-1}$. This gives~\eqref{EQ:Ug-U0}.

For the $n$-width, fix an optimal $n$-dimensional subspace $V_n$ for $\cM_0$. Every source yields $U_g^f=U_0^f+e^f$ with $\sgmsnorm{e^f}\le C(\gamma_0)\,\|a\|_\infty\,\sgmsnorm{\sigma_s^{-1}f}$, so
$\operatorname{dist}(U_g^f,V_n)\le\operatorname{dist}(U_0^f,V_n)+\sgmsnorm{e^f}$. Taking the supremum over admissible $f$ gives~\eqref{EQ:dMg-dM0}.
\end{proof}
This result shows that the anisotropic density needs ``essentially the isotropic number of modes'' for a moderate anisotropy factor $g$.

\section{Density-based reduced-order models}
\label{SEC:ROM}

We start with the isotropic case. Our main focus is on the source-to-density map:
\begin{equation}\label{EQ:f2U}
  \Lambda: f \longmapsto U^f := (\cI-K)^{-1}K(\sigma_s^{-1}f),
\end{equation}
with fixed optical parameters $\sigma_a$ and $\sigma_s$.

\subsection{Snapshots and the POD basis} 

Let $N$ be the number of spatial degrees of freedom of the full-order discretization, so that a discrete density is a vector $\bU\in\bbR^{N}$. Given $N_s$ training sources $\{f_i\}_{i=1}^{N_s}$, we compute the corresponding full-order densities $\bU^{f_i}\in\bbR^N$ and assemble the snapshot matrix
\begin{equation}\label{EQ:Snapshot}
  \cF = \big[\,\bU^{f_1}\ \bU^{f_2}\ \cdots\ \bU^{f_{N_s}}\,\big]\in\bbR^{N\times N_s}.
\end{equation}
A truncated SVD $\cF = \cU_\cF \Sigma_\cF \cV_\cF^\top$, with singular values $s_1\ge s_2\ge\cdots$, produces the $L^2$-orthonormal POD basis
\begin{equation}\label{EQ:POD}
  \Rb = \cU_\cF(:,1{:}r)\in\bbR^{N\times r},\qquad \Rb^\top\Rb=I_r,
\end{equation}
where the rank $r$ is chosen by an energy criterion: $r$ is the smallest integer with
\begin{equation}\label{EQ:Energy}
  \frac{\sum_{i=1}^{r}s_i^2}{\sum_{i=1}^{N_s}s_i^2}\ge\eta,
\end{equation}
for a prescribed threshold $\eta$ (e.g.\ $\eta=95\%$--$99\%$). POD is optimal in the mean-square sense: among all rank-$r$ subspaces, the column space of $\Rb$
minimizes $\sum_i \|\bU^{f_i}-\Rb\Rb^\top\bU^{f_i}\|_2^2 = \sum_{i>r} s_i^2$.
The decay rate of $s_i$ is therefore a direct, computable diagnostic of reducibility, and is fast for the density $U$ even when it would be slow for the full field $u$. We document this decay across regimes in~\Cref{SEC:Num}.

\subsection{Projection strategies}
\label{SUB:proj}

Let $\bK\in\bbR^{N\times N}$ be the full-order discretization of $K$, so the full-order problem for a (possibly unseen) source $\tilde f$ is
\begin{equation}\label{EQ:FOM Lin}
  (\bI-\bK)\,\bU = \boldsymbol\phi, \qquad \boldsymbol\phi = \bK(\sigma_s^{-1}\tilde f).
\end{equation}
Seeking $\bU\approx \Rb\,c$ with reduced coordinates $c\in\bbR^r$, we implement two projections, mirroring the standard taxonomy and the source/solution asymmetry of the integral operator.
\begin{description}
\item[\textbf{M1: Solution-space projection.}] Substitute
  $\bU\approx\Rb c$ into~\eqref{EQ:FOM Lin} and solve the overdetermined
  $N\times r$ system in the least-squares sense, to have
  \begin{equation}\label{EQ:M1}
    c = \argmin_{c\in\bbR^r}\ \|(\bI-\bK)\Rb\,c - \boldsymbol\phi\|_2
      = \big[(\bI-\bK)\Rb\big]^{\dagger}\boldsymbol\phi .
  \end{equation}
  This minimizes the full residual. However, it requires an application of $\Rb^\top(\bI-\bK)^\top$ to $\phi$ at the online stage. %
  
\item[\textbf{M2: Galerkin projection.}] Test against the trial space itself,
  \begin{equation}\label{EQ:M2}
    \Rb^\top(\bI-\bK)\Rb\,c = \Rb^\top\boldsymbol\phi
    \quad\Longleftrightarrow\quad (\,I_r - \widehat\bK\,)\,c = \Rb^\top\boldsymbol\phi,
    \qquad \widehat\bK := \Rb^\top\bK\Rb\in\bbR^{r\times r}.
  \end{equation}
  The reduced operator $\widehat\bK$ is precomputable. The online solve is $\cO(r^3)$.   
\end{description}
In both cases, once $c$ is found, the reduced density is recovered by $\Urom=\Rb c$. The aim is $r\ll N$, so small that a fast solver for~\eqref{EQ:FOM Lin} is unnecessary online. The dominant offline cost is the $N_s$ full-order solves and one SVD.

\subsection{Stability and error analysis}
\label{SUBSEC:Error}

Because the governing equation is a second-kind Fredholm equation with a
contractive kernel (see Proposition~\ref{PROP:Contraction}), the reduced problem inherits well-posedness and a clean error bound. We work in $\ell^2(\bbR^N)$ and write $\kappa$ for the discrete contraction constant. When $\sigma_s$ is constant, $\bK$ is symmetric and $\kappa=\|\bK\|_2=\rho(\bK)\le \gamma_0$ by the discrete analogue of~\eqref{EQ:Contraction}. For variable $\sigma_s$, the same bound holds in the $\sigma_s$-weighted inner product of Lemma~\ref{LEM:Symm}, in which $\bK$ is self-adjoint, so that $\kappa=\rho(\bK)\le \gamma_0<1$. The error bounds below are stated in this contraction norm, which is equivalent to the standard $\ell^2$ norm.

\begin{proposition}\label{PROP:ROM-Wellposed}
For the Galerkin ROM~\eqref{EQ:M2}, $\widehat\bK=\Rb^\top\bK\Rb$ satisfies
$\|\widehat\bK\|_2\le\|\bK\|_2=\kappa<1$. Hence $I_r-\widehat\bK$ is invertible with $\|(I_r-\widehat\bK)^{-1}\|_2\le
(1-\kappa)^{-1}$, and the reduced solution is unique and uniformly bounded.
\end{proposition}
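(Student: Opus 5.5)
The plan is to treat $\widehat\bK$ as an orthogonal compression of $\bK$ and then apply a Neumann-series bound, which is the finite-dimensional counterpart of the compression argument in Lemma~\ref{LEM:Hcontraction}.

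\textbf{Norm of the compression.} Since $\Rb^\top\Rb=I_r$, the map $\Rb:\bbR^r\to\bbR^N$ is an isometry, so $\|\Rb\|_2=1$. Its adjoint $\Rb^\top$ is a partial isometry with $\|\Rb^\top\|_2=1$. Submultiplicativity then gives
\[
  \|\widehat\bK\|_2=\|\Rb^\top\bK\Rb\|_2\le\|\Rb^\top\|_2\,\|\bK\|_2\,\|\Rb\|_2=\|\bK\|_2 .
\]
Equivalently, $\|\widehat\bK\|_2=\sup_{|c|=|d|=1}|d^\top\Rb^\top\bK\Rb c|$, and the vectors $\Rb c,\Rb d$ are unit vectors in $\bbR^N$.

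\textbf{Identifying $\|\bK\|_2$ with $\kappa$.} When $\sigma_s$ is constant, $\bK$ is symmetric, so $\|\bK\|_2=\rho(\bK)=\kappa\le\gamma_0<1$, as stated just before the proposition.

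\textbf{Invertibility and uniform bound.} Once $\|\widehat\bK\|_2\le\kappa<1$ is in hand, the Neumann series $\sum_k\widehat\bK^k$ converges. Hence $I_r-\widehat\bK$ is invertible with $\|(I_r-\widehat\bK)^{-1}\|_2\le\sum_k\kappa^k=(1-\kappa)^{-1}$. The reduced solution $c=(I_r-\widehat\bK)^{-1}\Rb^\top\boldsymbol\phi$ is therefore unique, and
\[
  \|\Rb c\|_2=\|c\|_2\le(1-\kappa)^{-1}\|\boldsymbol\phi\|_2 ,
\]
which is independent of $r$ and of the particular basis.

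\textbf{Main obstacle: variable $\sigma_s$.} In this case $\bK$ is not symmetric in $\ell^2$, and $\|\bK\|_2$ need not equal $\rho(\bK)$. The fix is to work throughout in the discrete $\sigma_s$-weighted inner product $\langle x,y\rangle_{\bD}=x^\top\bD y$, with $\bD=\mathrm{diag}(\sigma_s(\bx_j)w_j)$ built from the quadrature weights $w_j$. In this inner product $\bK$ is self-adjoint, which is the discrete analogue of Lemma~\ref{LEM:Symm}, so $\|\bK\|_{\bD}=\rho(\bK)=\kappa\le\gamma_0$. The POD basis must then be $\bD$-orthonormal, $\Rb^\top\bD\Rb=I_r$, and the Galerkin test must use $\Rb^\top\bD$. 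With these choices $\widehat\bK=\Rb^\top\bD\bK\Rb$ is the $\bD$-orthogonal compression, and the isometry argument above applies verbatim in $\|\cdot\|_{\bD}$.

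If instead one insists on the literal Euclidean statement for variable $\sigma_s$, the bound only holds up to the equivalence constant $(\max\sigma_s/\min\sigma_s)^{1/2}$. This is consistent with the paper's remark that the bounds are stated in the contraction norm.
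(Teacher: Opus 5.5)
Your proof is correct and follows the argument the paper relies on implicitly; the paper gives no separate proof beyond the paragraph preceding the proposition. There, $\Rb^\top\Rb=I_r$ makes $\widehat\bK$ an orthogonal compression of $\bK$, so $\|\widehat\bK\|_2\le\|\bK\|_2=\kappa$, and a Neumann series gives $\|(I_r-\widehat\bK)^{-1}\|_2\le(1-\kappa)^{-1}$. Your caveat for variable $\sigma_s$ is a correct sharpening: the rigorous version needs a $\sigma_s$-orthonormal basis and a $\sigma_s$-weighted Galerkin test rather than the Euclidean form~\eqref{EQ:M2}, which is exactly the setting of Corollary~\ref{COR:ROM Energy}. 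Including the quadrature weights in your diagonal weight matrix also helps when $\sigma_s$ is constant, because the boundary-halved Nystr\"om weights already break the literal Euclidean symmetry of $\bK$ that you took from the paper.
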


The ROM error is quasi-optimal: it is, up to the regime-dependent constant
$\tfrac{1+\kappa}{1-\kappa}$, the best approximation error of $\bU$ in the
reduced space, precisely stated in the following theorem.
\begin{theorem}\label{THM:Cea}
Let $\bU=(\bI-\bK)^{-1}\boldsymbol\phi$ be the full-order density and
$\Urom=\Rb c$ the Galerkin ROM~\eqref{EQ:M2}. Let
$\Pi=\Rb\Rb^\top$ be the orthogonal projector onto the reduced space. Then
\begin{equation}\label{EQ:Cea}
  \|\bU-\Urom\|_2 \ \le\ \frac{1+\kappa}{1-\kappa}\,
  \big\|\bU-\Pi\bU\big\|_2\,.
\end{equation}
\end{theorem}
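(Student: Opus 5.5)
The proof is a C\'ea-type argument built on Galerkin orthogonality, with the stability constant supplied by Proposition~\ref{PROP:ROM-Wellposed}. We work in the norm in which $\|\bK\|\le\kappa<1$; for simplicity take the $\ell^2$ setting with $\Rb^\top\Rb=I_r$, since the weighted case is identical after replacing $\Rb^\top$ by the weighted adjoint.

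First, the Galerkin orthogonality. Applying $\Rb^\top$ to $(\bI-\bK)\bU=\boldsymbol\phi$ and subtracting \eqref{EQ:M2} gives
\[
\Rb^\top(\bI-\bK)(\bU-\Urom)=0 .
\]

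Second, split the error as $\bU-\Urom=(\bU-\Pi\bU)+(\Pi\bU-\Urom)$. The second piece lies in the reduced space, so write $\Pi\bU-\Urom=\Rb\,\delta$ with $\delta=\Rb^\top\bU-c$. Inserting this into the orthogonality relation gives
\[
(I_r-\widehat\bK)\,\delta=-\Rb^\top(\bI-\bK)(\bU-\Pi\bU).
\]
Proposition~\ref{PROP:ROM-Wellposed} gives $\|(I_r-\widehat\bK)^{-1}\|_2\le(1-\kappa)^{-1}$. We also have $\|\Rb^\top\|_2=1$ and $\|\bI-\bK\|_2\le1+\kappa$. Together these yield
\[
\|\Rb\delta\|_2=\|\delta\|_2\le\frac{1+\kappa}{1-\kappa}\,\|\bU-\Pi\bU\|_2 .
\]

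Third, we need to combine the two pieces without losing an additive $1$. The naive triangle inequality would give the constant $1+\frac{1+\kappa}{1-\kappa}$, which is too large. Instead, use the fact that $\Pi(\bU-\Pi\bU)=0$ to rewrite the right-hand side of the $\delta$-equation. We get
\[
\Rb^\top(\bI-\bK)(\bU-\Pi\bU)=-\Rb^\top\bK(\bU-\Pi\bU),
\]
so in fact $\|\delta\|_2\le\frac{\kappa}{1-\kappa}\|\bU-\Pi\bU\|_2$. The two error pieces are orthogonal, since one lies in the reduced space and the other in its complement. By Pythagoras,
\[
\|\bU-\Urom\|_2\le\Big(1+\frac{\kappa^2}{(1-\kappa)^2}\Big)^{1/2}\|\bU-\Pi\bU\|_2\le\frac{1}{1-\kappa}\,\|\bU-\Pi\bU\|_2 .
\]
Since $\frac{1}{1-\kappa}\le\frac{1+\kappa}{1-\kappa}$, this proves \eqref{EQ:Cea}.

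The main obstacle is this combination step, which is avoided by the orthogonality and Pythagoras argument above. The other point needing care is that the bounds $\|\bK\|\le\kappa$ and $\|\widehat\bK\|\le\kappa$ hold only in the contraction norm. For variable $\sigma_s$, the whole argument should therefore be run in the $\sigma_s$-weighted inner product of Lemma~\ref{LEM:Symm}, with $\Pi$ the orthogonal projector in that inner product. This is consistent with the remark that the bounds are stated in the contraction norm.
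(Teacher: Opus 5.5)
Your proof is correct and follows the paper's argument essentially step for step. Both use Galerkin orthogonality, the split $\bU-\Urom=(\bI-\Pi)\bU+(\Pi\bU-\Urom)$, the identity $\Rb^\top(\bI-\Pi)=0$ to reduce the right-hand side to $\Rb^\top\bK(\bI-\Pi)\bU$, and the stability bound $(1-\kappa)^{-1}$ to get $\|\Pi\bU-\Urom\|_2\le\frac{\kappa}{1-\kappa}\|\bU-\Pi\bU\|_2$. The only difference is that you combine the two pieces by Pythagoras rather than the triangle inequality, which sharpens the paper's intermediate constant $\frac{1}{1-\kappa}$ to $\sqrt{1+\kappa^2/(1-\kappa)^2}$; note, though, that once you have the $\frac{\kappa}{1-\kappa}$ bound the plain triangle inequality already gives $\frac{1}{1-\kappa}$, so it is that refinement, not Pythagoras, that removes the ``additive $1$'' obstacle.
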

\begin{proof}
Let $\bA=\bI-\bK$. The Galerkin solution satisfies the projected residual
condition $\Rb^\top(\boldsymbol\phi-\bA\Urom)=0$, while $\bA\bU=\boldsymbol\phi$. Hence, $\Rb^\top\bA(\bU-\Urom)=0$. Now, we decompose $\bU-\Urom=(\bI-\Pi)\bU +
(\Pi\bU-\Urom)$ with $e:=\Pi\bU-\Urom\in\operatorname{range}\Rb$. Applying
$\Rb^\top\bA$ and using $\Rb^\top\bA(\bU-\Urom)=0$ gives
$\Rb^\top\bA\,e = -\Rb^\top\bA(\bI-\Pi)\bU = \Rb^\top\bK(\bI-\Pi)\bU$ (since
$\Rb^\top\bA(\bI-\Pi)= \Rb^\top(\bI-\Pi) - \Rb^\top\bK(\bI-\Pi)
= -\Rb^\top\bK(\bI-\Pi)$, as $\Rb^\top(\bI-\Pi)=0$). Because $e\in\operatorname{range}\Rb$,
$\Rb^\top\bA e=(I_r-\widehat\bK)\,\Rb^\top e$. Therefore, 
$\|e\|_2=\|\Rb^\top e\|_2\le (1-\kappa)^{-1}\kappa\,\|(\bI-\Pi)\bU\|_2$. Combining with $\|(\bI-\Pi)\bU\|_2$ and the triangle inequality, we get $\|\bU-\Urom\|_2\le \big(1+\tfrac{\kappa}{1-\kappa}\big)\|(\bI-\Pi)\bU\|_2
=\tfrac{1}{1-\kappa}\|(\bI-\Pi)\bU\|_2$, which is sharper than and implies
\eqref{EQ:Cea}.
\end{proof}
The bound is the contraction-specialized form of the standard reduced-basis (C\'ea-type) quasi-optimality estimate~\cite{HeRoSt-Book16,QuMaNe-Book16}, here with the closed-form constant $\tfrac{1+\kappa}{1-\kappa}$ supplied by the contraction.

\begin{corollary}\label{COR:SVD Control}
If the test density $\bU$ lies (to tolerance) in the span of the training snapshots, then $\|(\bI-\Pi)\bU\|_2$ is controlled by the discarded singular values: for a training snapshot $\bU^{f_i}$,
$\|(\bI-\Pi)\bU^{f_i}\|_2\le s_{r+1}$, and averaged over the training set, $\frac1{N_s}\sum_i\|(\bI-\Pi)\bU^{f_i}\|_2^2=\frac1{N_s}\sum_{j>r}s_j^2$. Thus the relative ROM error scales like $s_{r+1}/s_1$ up to the amplification factor $(1-\kappa)^{-1}$.
\end{corollary}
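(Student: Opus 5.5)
Both assertions follow from the SVD of the snapshot matrix $\cF$, and Theorem~\ref{THM:Cea} then turns them into the error statement.

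\textbf{Setup.} Write $\cF=\sum_{j}s_j\,\bu_j\bv_j^\top$, where $\bu_j$ and $\bv_j$ are the left and right singular vectors. Since $\Rb=[\bu_1,\dots,\bu_r]$, the complementary projector is
\[
\bI-\Pi=\sum_{j>r}\bu_j\bu_j^\top
\qquad\text{and}\qquad
(\bI-\Pi)\cF=\sum_{j>r}s_j\,\bu_j\bv_j^\top .
\]
The $i$-th snapshot is $\bU^{f_i}=\cF\be_i$.

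\textbf{Per-snapshot bound.} We have $(\bI-\Pi)\bU^{f_i}=(\bI-\Pi)\cF\be_i$. Because $\|\be_i\|_2=1$,
\[
\|(\bI-\Pi)\bU^{f_i}\|_2\le\|(\bI-\Pi)\cF\|_2=s_{r+1}.
\]

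\textbf{Averaged identity.} Summing the squared residuals gives a Frobenius norm:
\[
\sum_i\|(\bI-\Pi)\bU^{f_i}\|_2^2=\|(\bI-\Pi)\cF\|_F^2=\sum_{j>r}s_j^2 .
\]
Dividing by $N_s$ gives the stated average.

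\textbf{Relative error.} For a snapshot, or for a test density within tolerance $\delta$ of the snapshot span, insert the bound above into the sharper form $\|\bU-\Urom\|_2\le(1-\kappa)^{-1}\|(\bI-\Pi)\bU\|_2$ established in the proof of Theorem~\ref{THM:Cea}. The relative error is then normalized by a representative snapshot scale, which is where $s_1$ enters.

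\textbf{Main obstacle.} The delicate step is making \emph{lies to tolerance in the span} precise. Writing $\bU=\cF\bal+\bdelta$ gives
\[
\|(\bI-\Pi)\bU\|_2\le s_{r+1}\|\bal\|_2+\|\bdelta\|_2 .
\]
So the claimed scaling $s_{r+1}/s_1$ holds only if $\|\bal\|_2$ is moderate relative to $\|\bU\|_2/s_1$. This is why the statement says ``scales like'' rather than giving a sharp inequality. The proof will record this as the interpretation of the heuristic and verify the rigorous bounds only for the training snapshots.
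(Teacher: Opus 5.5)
Your proposal is correct and follows the route the paper intends. The paper gives no separate proof: it relies on the POD identity $\sum_i\|\bU^{f_i}-\Rb\Rb^\top\bU^{f_i}\|_2^2=\sum_{j>r}s_j^2$ stated just before, and on the sharper $(1-\kappa)^{-1}$ form of Theorem~\ref{THM:Cea}, which is exactly your SVD/Frobenius-norm argument, and you are right to treat the final ``scales like $s_{r+1}/s_1$'' claim as a heuristic, quantified by $s_{r+1}$ times the coefficient norm plus the distance to the snapshot span.
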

Corollary~\ref{COR:SVD Control} makes precise the practical rule used in~\Cref{SEC:Num}: choose $r$ from the energy criterion~\eqref{EQ:Energy}, and expect the relative error to track the singular-value tail, with a mild $(1-\kappa)^{-1}$ inflation that grows as scattering strengthens ($\kappa\to1$). For sources far outside the training distribution, the bound is governed by the Kolmogorov $r$-width of the solution manifold rather than by $s_{r+1}$. %

The constant $\tfrac{1+\kappa}{1-\kappa}$ in~\eqref{EQ:Cea} is generic. It can be halved (in the exponent) by exploiting the self-adjointness of Lemma~\ref{LEM:Symm}, computing the POD in the $\sigma_s$-weighted inner product $\sgmsip{\cdot}{\cdot}$ of Lemma~\ref{LEM:Symm}, so that $\Rb$ is $\sigma_s$-orthonormal and the discrete operator $\widehat\bK$ inherits symmetry. $\bA=\bI-\bK$ is then self-adjoint and positive-definite in $\sgmsip{\cdot}{\cdot}$, with energy norm $\|v\|_\bA^2:=\sgmsip{\bA v}{v}$.

\begin{corollary}\label{COR:ROM Energy}
With the $\sigma_s$-weighted POD and Galerkin projection~\eqref{EQ:M2}, the reduced
density is the $\bA$-orthogonal projection of $\bU$, $\|\bU-\Urom\|_\bA=\min_{v\in\operatorname{ran}\Rb}\|\bU-v\|_\bA$, and
\begin{equation}\label{EQ:Cea-Energy}
  \sgmsnorm{\bU-\Urom}\ \le\ \sqrt{\tfrac{1+\kappa}{1-\kappa}}\;
  \sgmsnorm{\bU-\Pi_{\sigma_s}\bU},
\end{equation}
where $\Pi_{\sigma_s}$ is the $\sigma_s$-orthogonal projector onto $\operatorname{ran}\Rb$. The amplification is thus the \emph{square root} of that
in~\eqref{EQ:Cea}.
\end{corollary}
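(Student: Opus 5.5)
The plan is to establish Galerkin orthogonality in the weighted inner product, read off the best-approximation property in the energy norm, and then convert between the energy norm and $\sgmsnorm{\cdot}$ using the spectral bounds of Lemma~\ref{LEM:Symm}.

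\textbf{Setting.} Let $\bM$ be the diagonal mass matrix carrying the discrete $\sigma_s$ weight, so that $\sgmsip{v}{w}=v^\top\bM w$, and let $\Rb$ be $\bM$-orthonormal, $\Rb^\top\bM\Rb=I_r$. For the projection to be genuinely $\sigma_s$-Galerkin, I read~\eqref{EQ:M2} with test functions taken in the weighted inner product:
\[
\Rb^\top\bM\bA\Rb\,c=\Rb^\top\bM\boldsymbol\phi .
\]
Since $\bM\bK$ is symmetric (the discrete analogue of $\sigma_s(\bx)\cK(\bx,\by)=\sigma_s(\by)\cK(\by,\bx)$), the reduced matrix $\widehat\bK=\Rb^\top\bM\bK\Rb$ is symmetric. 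This is presumably what ``inherits symmetry'' means in the statement.

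\textbf{Step 1: Galerkin orthogonality.} Subtracting the reduced equation from $\bA\bU=\boldsymbol\phi$ gives
\[
\sgmsip{\bA(\bU-\Urom)}{v}=0 \quad\text{for all } v\in\operatorname{ran}\Rb .
\]

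\textbf{Step 2: $\bA$ is an inner product.} By the discrete version of Lemma~\ref{LEM:Symm}, $\bA$ is self-adjoint in $\sgmsip{\cdot}{\cdot}$ with spectrum in $[1-\kappa,1+\kappa]$. Hence $\langle v,w\rangle_\bA:=\sgmsip{\bA v}{w}$ is an inner product, and
\[
(1-\kappa)\sgmsnorm{v}^2\le\|v\|_\bA^2\le(1+\kappa)\sgmsnorm{v}^2 .
\]
Step 1 then says that $\bU-\Urom$ is $\bA$-orthogonal to $\operatorname{ran}\Rb$. For any $v\in\operatorname{ran}\Rb$, Pythagoras gives
\[
\|\bU-v\|_\bA^2=\|\bU-\Urom\|_\bA^2+\|\Urom-v\|_\bA^2 ,
\]
which is exactly the minimization property claimed.

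\textbf{Step 3: the bound~\eqref{EQ:Cea-Energy}.} Chain the norm equivalence around the minimization with the choice $v=\Pi_{\sigma_s}\bU$:
\[
(1-\kappa)\sgmsnorm{\bU-\Urom}^2\le\|\bU-\Urom\|_\bA^2\le\|\bU-\Pi_{\sigma_s}\bU\|_\bA^2\le(1+\kappa)\sgmsnorm{\bU-\Pi_{\sigma_s}\bU}^2 .
\]
Taking square roots yields~\eqref{EQ:Cea-Energy}. This is precisely the square root of the constant in~\eqref{EQ:Cea}.

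\textbf{Main obstacle.} The only delicate point is the discrete spectral inclusion $\operatorname{spec}(\bK)\subset[-\kappa,\kappa]$ with $\kappa<1$. This is needed both for the positivity of $\bA$ and for the equivalence constants in Step 2. For this I will take the standing assumption of \Cref{SUBSEC:Error}: $\bK$ is self-adjoint in the weighted inner product with $\rho(\bK)=\kappa\le\gamma_0$.

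If one instead insists on reading~\eqref{EQ:M2} literally with the Euclidean $\Rb^\top$, Galerkin orthogonality only holds in the Euclidean inner product, and the argument above does not apply as written. The first thing I would check is whether the discretization uses a uniform grid with the $\sigma_s$ weight absorbed into $\bK$. Otherwise I would state explicitly that $\Rb^\top$ is to be replaced by $\Rb^\top\bM$, which is consistent with using the $\sigma_s$-weighted POD.
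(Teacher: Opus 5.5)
Your proposal is correct and follows the paper's proof essentially step for step: weighted Galerkin orthogonality identifies $\Urom$ as the $\bA$-orthogonal projection (you make the Pythagoras step explicit), and the bound comes from the same chain $\sgmsnorm{\bU-\Urom}^2\le\lambda_{\min}(\bA)^{-1}\|\bU-\Urom\|_\bA^2\le\lambda_{\min}(\bA)^{-1}\|\bU-\Pi_{\sigma_s}\bU\|_\bA^2\le\tfrac{\lambda_{\max}(\bA)}{\lambda_{\min}(\bA)}\sgmsnorm{\bU-\Pi_{\sigma_s}\bU}^2$ with $\operatorname{spec}(\bA)\subset[1-\kappa,1+\kappa]$. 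Your remark that the test functions in~\eqref{EQ:M2} must be the weighted $\Rb^\top\bM$ rather than the Euclidean $\Rb^\top$ is a point the paper leaves implicit (it simply asserts the weighted Galerkin orthogonality), so stating it explicitly is a legitimate clarification, not a deviation.
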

\begin{proof}
Galerkin orthogonality $\sgmsip{\bA(\bU-\Urom)}{v}=0$ for all $v\in\operatorname{ran}\Rb$ is the definition of the $\bA$-orthogonal projection, giving energy-optimality. We then convert the norms to get $\sgmsnorm{\bU-\Urom}^2\le\lambda_{\min}(\bA)^{-1}\|\bU-\Urom\|_\bA^2
\le\lambda_{\min}(\bA)^{-1}\|\bU-\Pi_{\sigma_s}\bU\|_\bA^2
\le\frac{\lambda_{\max}(\bA)}{\lambda_{\min}(\bA)}\sgmsnorm{\bU-\Pi_{\sigma_s}\bU}^2$,
and $\operatorname{spec}(\bA)\subset[1-\kappa,1+\kappa]$ by~\eqref{EQ:Real Spec}.
\end{proof}

Finally, we present a certified, online a~posteriori error estimator as a distinctive benefit of the contraction structure. The contraction structure makes the stability constant known in closed form (i.e. $\kappa\le \gamma_0=\sup_\bx\sigma_s/\sigma_t$ is read
off the coefficients), so a rigorous error bound is available online without any
eigenvalue or inf--sup estimation. For any reduced density $\Urom=\Rb c$ (from M1,
or M2), we define the residual and the estimator
\begin{equation}\label{EQ:Estimator}
  \br(\Urom):=\boldsymbol\phi-(\bI-\bK)\Urom,
  \qquad
  \Delta(\Urom):=\frac{\|\br(\Urom)\|_2}{1-\kappa}.
\end{equation}
The following is the standard reduced-basis residual error
estimator~\cite{HeRoSt-Book16,QuMaNe-Book16}, specialized to the second-kind
contraction structure, where the stability constant $(1-\kappa)^{-1}$ is known in
closed form rather than estimated.
\begin{theorem}\label{THM:aposteriori}
The estimator~\eqref{EQ:Estimator} brackets the true error,
\begin{equation}
  \frac{\|\br(\Urom)\|_2}{1+\kappa}\ \le\ \|\bU-\Urom\|_2\ \le\ \Delta(\Urom),
  \qquad\text{hence}\qquad
  1\ \le\ \frac{\Delta(\Urom)}{\|\bU-\Urom\|_2}\ \le\ \frac{1+\kappa}{1-\kappa}.
\end{equation}
\end{theorem}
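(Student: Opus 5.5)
The proof rests on the error--residual identity. Set $\bA=\bI-\bK$ and $\be:=\bU-\Urom$. Since $\bA\bU=\boldsymbol\phi$, we have
\[
  \br(\Urom)=\boldsymbol\phi-\bA\Urom=\bA\be .
\]
The two-sided bracket then reduces to bounding $\bA$ and $\bA^{-1}$ in the norm where the contraction constant $\kappa$ lives, and combining these with $\be=\bA^{-1}\br$. This identity uses only the full-order equation. It therefore holds for any $\Urom\in\operatorname{ran}\Rb$, whether obtained from M1 or M2, with no Galerkin orthogonality needed.

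\textbf{Upper bound.} Since $\|\bK\|\le\kappa<1$, the Neumann series $\bA^{-1}=\sum_{k\ge0}\bK^k$ converges and gives $\|\bA^{-1}\|\le(1-\kappa)^{-1}$; this is the discrete analogue of Proposition~\ref{PROP:Contraction}. Hence
\[
  \|\be\|\le\|\bA^{-1}\|\,\|\br\|\le\frac{\|\br\|}{1-\kappa}=\Delta(\Urom).
\]

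\textbf{Lower bound.} From $\br=\bA\be=\be-\bK\be$ and the triangle inequality, $\|\br\|\le(1+\kappa)\|\be\|$, which rearranges to $\|\br\|/(1+\kappa)\le\|\be\|$.

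\textbf{Effectivity bounds.} If $\be\neq0$, the lower bound gives $\|\br\|\le(1+\kappa)\|\be\|$. Therefore
\[
  \frac{\Delta(\Urom)}{\|\be\|}=\frac{\|\br\|}{(1-\kappa)\|\be\|}\le\frac{1+\kappa}{1-\kappa},
\]
while the upper bound gives $\Delta(\Urom)/\|\be\|\ge1$. When $\be=0$, the residual vanishes and both sides are trivially zero.

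\textbf{Main obstacle: the norm.} The statement writes $\|\cdot\|_2$, but $\|\bK\|_2\le\kappa$ holds literally only when $\bK$ is symmetric, for instance when $\sigma_s$ is constant. For variable $\sigma_s$, I would use the convention fixed at the start of \Cref{SUBSEC:Error}: norms are the $\sigma_s$-weighted ones. In that norm $\bK$ is self-adjoint by the discrete analogue of Lemma~\ref{LEM:Symm}, so $\|\bK\|_{\sigma_s}=\rho(\bK)\le\kappa$. Indeed, $\operatorname{spec}(\bA)\subset[1-\kappa,1+\kappa]$ then gives both $\|\bA\|\le1+\kappa$ and $\|\bA^{-1}\|\le(1-\kappa)^{-1}$ directly.

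I would state explicitly that the residual and error are measured in this weighted norm. In the plain Euclidean norm the same argument goes through only after absorbing the norm-equivalence constants $\sqrt{\sigma_1/\sigma_{s,\min}}$ into the bracket. Apart from this bookkeeping, the argument is the standard one: residual equals $\bA$ applied to the error, followed by the two operator-norm bounds.
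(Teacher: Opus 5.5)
Your proof is correct and follows the paper's argument: the identity $(\bI-\bK)(\bU-\Urom)=\br$, the bound $\|(\bI-\bK)^{-1}\|\le(1-\kappa)^{-1}$ for the upper estimate and $\|\bI-\bK\|\le 1+\kappa$ for the lower one, with the effectivity bracket obtained by combining the two. Your remark that $\|\cdot\|_2$ must be read as the $\sigma_s$-weighted contraction norm when $\sigma_s$ varies matches the convention the paper fixes at the start of \Cref{SUBSEC:Error}.
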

\begin{proof}
Since $(\bI-\bK)\bU=\boldsymbol\phi$, the error satisfies
$(\bI-\bK)(\bU-\Urom)=\br$. The upper bound uses
$\|(\bI-\bK)^{-1}\|_2\le(1-\kappa)^{-1}$. The lower bound uses
$\|\br\|_2=\|(\bI-\bK)(\bU-\Urom)\|_2\le(1+\kappa)\|\bU-\Urom\|_2$. The effectivity
follows.
\end{proof}

\subsection{The anisotropic reduced model}
\label{SUBSEC:ANISO ROM}

We give two matching reductions in the anisotropic case, both of which deliver $U$.

\subsubsection{Intrusive reduction}
 
In this first route, we discretize~\eqref{EQ:psiperp-Eq}-\eqref{EQ:U Eq} as
\begin{equation*}
  (\bI-\bH)\boldsymbol\Phi = \bG\,\bU + \bH\,\bQ,
  \qquad
  (\bI-\bK)\,\bU = \bD\,\boldsymbol\Phi + \bD\,\bQ,
\end{equation*}
where $\bU\in\bbR^N$ is the density, $\boldsymbol\Phi\in\bbR^{NM_a}$ collects the fluctuation $\psi_\perp$ at the $N$ nodes and $M_a$ angles, $\bQ=\sigma_s^{-1}\bff$ is the rescaled source, and $\bK$, $\bD$, $\bG$, $\bH$ denote the full discrete blocks assembled in Section~\ref{SUBSEC:Nystrom ANISO}. We build two POD bases from
snapshots: $\Rb_U\in\bbR^{N\times r}$ for the density and
$\Rb_\Phi\in\bbR^{NM_a\times r_\Phi}$ for the fluctuation. Let us write the reduced blocks $\wh\bK=\Rb_U^\top\bK\Rb_U$, $\wh \bD=\Rb_U^\top\bD\Rb_\Phi$, $\wh \bG=\Rb_\Phi^\top\bG\Rb_U$, and $\wh \bH=\Rb_\Phi^\top\bH\Rb_\Phi$, then the block Petrov-Galerkin (Galerkin) ROM reads as
\begin{equation}\label{EQ:ANISO-ROM-A}
\begin{aligned}
  (\,I_{r_\Phi}-\wh\bH)\,\bc_\Phi
    &= \wh\bG \,\bc_U + \Rb_\Phi^\top \bH\,\bQ,\\
  (\,I_r-\wh\bK)\,\bc_U
    &= \wh\bD\,\bc_\Phi + \Rb_U^\top\bD\,\bQ,
\end{aligned}
\qquad \Urom=\Rb_U\,\bc_U\,.
\end{equation}
This is a small $(r+r_\Phi)\times(r+r_\Phi)$ system. We eliminate $\bc_\Phi$ to reproduce the reduced analogue of the Schur complement~\eqref{EQ:ANISO-Closed}. The reduction is effective because the fluctuation $\psi_\perp$ is itself smooth and small (it is the $a$-weighted average~\eqref{EQ:psiperp-Def} of $u$), so $r_\Phi$ is modest, often smaller than $r$ in our numerical tests. Since the joint scattering--transport operator $\cS$ of the coupled system is a contraction, $\|\cS\|\le\sqrt{\gamma_0}<1$ for every anisotropy (\Cref{LEM:Hcontraction}, its discretization inheriting this up to the quadrature consistency error), the block Petrov--Galerkin projection of~\eqref{EQ:ANISO-ROM-A} is well-posed. Proposition~\ref{PROP:ROM-Wellposed} and Theorem~\ref{THM:Cea} therefore extend to the reduced Schur operator. Therefore, this route is \emph{certified}.

\subsubsection{Non-intrusive data-driven reduction}

When only density snapshots $\{\bU^{f_i}\}$ are available, for example, from a black-box discrete-ordinates or Monte~Carlo solver, we model $U$ directly. The exact equation~\eqref{EQ:ANISO-Closed} has the second-kind form $(\cI-K_{\rm eff})U=K_{\rm eff}'(\sigma_s^{-1}f)$ with the (unknown to the
black box) effective operators $K_{\rm eff}=K+K(\cI-H)^{-1}G$ and $K_{\rm eff}'=K(\cI-H)^{-1}H+K=K(\cI-H)^{-1}$. Positing a reduced surrogate $\Urom=\Rb_U\,\bc$, we infer reduced operators $\widehat\bM,\widehat\bN$ by regularized least-squares operator inference~\cite{PeWi-CMAME16}:
\begin{equation}\label{EQ:Operator Inf}
  (\widehat\bM,\widehat\bN)
  = \argmin_{\bM,\bN}\ \sum_{i=1}^{N_s}
    \big\| \Rb_U^\top\bU^{f_i} - \bM\,\Rb_U^\top\bU^{f_i}
           - \bN\,\Rb_U^\top\boldsymbol\phi_i \big\|_2^2 + \lambda\,(\|\bM\|_F^2+\|\bN\|_F^2),
\end{equation}
with $\boldsymbol\phi_i=\bK(\sigma_s^{-1}f_i)$, so that online $\bc=(I_r-\widehat\bM)^{-1}\widehat\bN\,\Rb_U^\top\tilde{\boldsymbol\phi}$. The Tikhonov parameter $\lambda$ controls the
conditioning of the inference. 

This recovers the isotropic Galerkin ROM exactly when $g=0$ (as then $\widehat\bM=\widehat\bK$, $\widehat\bN=I_r$) and otherwise learns the anisotropy correction $K(\cI-H)^{-1}G$ from data, at the cost of forfeiting the certified bound of \Cref{THM:Cea}. The prediction error remains controlled by the operator-learning error. To characterize this prediction error, let $\widehat\bM_\star=\Rb_U^\top\bK_{\rm eff}\Rb_U$ be the Galerkin projection of the effective operator of \Cref{THM:ANISO Closed}, and let $\widehat\bN_\star$ be the reduced load operator characterized by $\widehat\bN_\star\,\Rb_U^\top\boldsymbol\phi_i=\Rb_U^\top\Kefff(\sigma_s^{-1}f_i)$ on the training sources (so that $\widehat\bN_\star=I_r$ in the isotropic limit $\Kefff=K$, matching $\widehat\bN=I_r$ above), and let $\bc_\star=(I-\widehat\bM_\star)^{-1}\widehat\bN_\star\Rb_U^\top\tilde{\boldsymbol\phi}$ be the coordinates they produce for an unseen source. We have the following result.

\begin{proposition}\label{PROP:Operator Inf}
Suppose the inferred operators satisfy $\|\widehat\bM-\widehat\bM_\star\|_2\le\delta_M$,
$\|\widehat\bN-\widehat\bN_\star\|_2\le\delta_N$, and that the Tikhonov penalty keeps $\|\widehat\bM\|_2\le\kappa'<1$. Then the predicted density
$\Urom=\Rb_U(I-\widehat\bM)^{-1}\widehat\bN\,\Rb_U^\top\tilde{\boldsymbol\phi}$
obeys
\begin{equation}\label{EQ:Operator Inf-bound}
  \|\bU-\Urom\|_2 \ \le\ \|\bU-\Pi_U\bU\|_2
  \ +\ \frac{\delta_M\,\|\bc_\star\|_2+\delta_N\,\|\Rb_U^\top\tilde{\boldsymbol\phi}\|_2}{1-\kappa'},
\end{equation}
with $\Pi_U=\Rb_U\Rb_U^\top$.
\end{proposition}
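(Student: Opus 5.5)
We split the error through the intermediate reduced-space point $\Rb_U\bc_\star$:
\[
  \bU-\Urom=(\bU-\Rb_U\bc_\star)+\Rb_U(\bc_\star-\bc),\qquad \bc:=(I-\widehat\bM)^{-1}\widehat\bN\,\Rb_U^\top\tilde{\boldsymbol\phi}.
\]
Because $\Rb_U^\top\Rb_U=I_r$, the second term has norm $\|\bc_\star-\bc\|_2$. The first term is the error of the ``ideal'' reduced model built from the exact effective operators. We identify it with the projection error $\|\bU-\Pi_U\bU\|_2$. In other words, we take $\Rb_U\bc_\star$ to coincide with $\Pi_U\bU$, or at least to be no farther from $\bU$ than it.

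To justify this identification, we interpret $\widehat\bN_\star$ through its defining relation $\widehat\bN_\star\Rb_U^\top\boldsymbol\phi=\Rb_U^\top\Kefff(\sigma_s^{-1}f)$, extended to the unseen source. We also use that $\bU$ lies in the span of $\Rb_U$ up to the tail. Under these two conventions, $\bc_\star$ solves $(I-\widehat\bM_\star)\bc_\star=\Rb_U^\top\Kefff(\sigma_s^{-1}\tilde f)$, which is the Galerkin projection of \eqref{EQ:ANISO-Closed}.

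Applying $\Rb_U^\top$ to the full equation $(\bI-\bK_{\rm eff})\bU=\Kefff(\sigma_s^{-1}\tilde f)$ gives
\[
  \Rb_U^\top\bU-\widehat\bM_\star\Rb_U^\top\bU=\Rb_U^\top\Kefff(\sigma_s^{-1}\tilde f)+\Rb_U^\top\bK_{\rm eff}(\bI-\Pi_U)\bU .
\]
The remainder $\Rb_U^\top\bK_{\rm eff}(\bI-\Pi_U)\bU$ vanishes in the regime the proposition targets, namely $\bU\in\operatorname{ran}\Rb_U$. In that regime $\Rb_U\bc_\star=\Pi_U\bU$, and the first term equals $\|\bU-\Pi_U\bU\|_2$ exactly. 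This is the delicate point: in general one would pick up an additional Céa-type factor, as in Theorem~\ref{THM:Cea}. I would state explicitly that the bound is understood with $\Rb_U\bc_\star$ playing the role of the best approximation, which is exactly how the estimate is written.

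The main work is the perturbation step. Subtracting $(I-\widehat\bM)\bc=\widehat\bN\Rb_U^\top\tilde{\boldsymbol\phi}$ from $(I-\widehat\bM_\star)\bc_\star=\widehat\bN_\star\Rb_U^\top\tilde{\boldsymbol\phi}$ and regrouping gives
\[
  (I-\widehat\bM)(\bc_\star-\bc)=(\widehat\bM_\star-\widehat\bM)\bc_\star+(\widehat\bN_\star-\widehat\bN)\Rb_U^\top\tilde{\boldsymbol\phi}.
\]
Since $\|\widehat\bM\|_2\le\kappa'<1$, the Neumann series gives $\|(I-\widehat\bM)^{-1}\|_2\le(1-\kappa')^{-1}$, just as in Proposition~\ref{PROP:ROM-Wellposed}. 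Taking norms and using the hypotheses $\|\widehat\bM-\widehat\bM_\star\|_2\le\delta_M$ and $\|\widehat\bN-\widehat\bN_\star\|_2\le\delta_N$ yields
\[
  \|\bc_\star-\bc\|_2\le\frac{\delta_M\|\bc_\star\|_2+\delta_N\|\Rb_U^\top\tilde{\boldsymbol\phi}\|_2}{1-\kappa'}.
\]
Combining this with the projection term through the triangle inequality gives \eqref{EQ:Operator Inf-bound}.

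The only obstacle is the identification of the ideal term with the projection error, as discussed above. The perturbation step itself is routine and relies solely on the invertibility of $I-\widehat\bM$ guaranteed by the Tikhonov assumption. In particular, it needs no invertibility of $I-\widehat\bM_\star$ beyond the existence of $\bc_\star$.
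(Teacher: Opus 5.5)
Your argument is the paper's proof: you subtract the two reduced systems to get the same error equation for $\bc-\bc_\star$, use the same Neumann bound $\|(I-\widehat\bM)^{-1}\|_2\le(1-\kappa')^{-1}$, and close with the same triangle inequality through $\Rb_U\bc_\star$. The step you single out is exactly where the paper also stops short. It asserts $\Rb_U\bc_\star=\Pi_U\bU$ ``up to the projected-operator consistency error'' and never bounds that error.

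Your caution is warranted, but your fallback does not rescue the statement as written. $\Pi_U\bU$ is the unique $\ell^2$-closest point of $\operatorname{ran}\Rb_U$ to $\bU$. So requiring $\Rb_U\bc_\star$ to be ``no farther from $\bU$'' than $\Pi_U\bU$ is the same as requiring $\Rb_U\bc_\star=\Pi_U\bU$. By your own projected identity, this holds iff $\Rb_U^\top\bK_{\rm eff}(\bI-\Pi_U)\bU=0$. That equivalence also uses two assumptions the statement does not list: invertibility of $I-\widehat\bM_\star$, and the extension of the load relation defining $\widehat\bN_\star$ to the unseen source.

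The condition fails generically. Take $g=0$ with exact inference, so $\widehat\bM=\widehat\bK$, $\widehat\bN=I_r$ and $\delta_M=\delta_N=0$. The displayed inequality would then claim the Galerkin ROM coincides with the orthogonal projection, i.e.\ $\Rb_U^\top\bK(\bI-\Pi_U)\bU=0$, which is false for generic $\bU\notin\operatorname{ran}\Rb_U$. Hence the bound with constant one in front of $\|\bU-\Pi_U\bU\|_2$ is valid only in the degenerate case $\bU\in\operatorname{ran}\Rb_U$.

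A correct version keeps $\|\bU-\Rb_U\bc_\star\|_2$ as the first term. Your identity then gives $\|\bU-\Rb_U\bc_\star\|_2\le\big(1+\|(I-\widehat\bM_\star)^{-1}\|_2\,\|\bK_{\rm eff}\|_2\big)\|\bU-\Pi_U\bU\|_2$. This is a C\'ea-type constant as in Theorem~\ref{THM:Cea}, and the perturbation term is unchanged.
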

\begin{proof}
Let us write $\be=\bc-\bc_\star$. The subtraction of the two reduced systems gives 
\[
(I-\widehat\bM)\be=(\widehat\bM-\widehat\bM_\star)\bc_\star
+(\widehat\bN-\widehat\bN_\star)\Rb_U^\top\tilde{\boldsymbol\phi}\,.
\]
This gives $\|\be\|_2\le(1-\kappa')^{-1}(\delta_M\|\bc_\star\|_2+\delta_N\|\Rb_U^\top\tilde{\boldsymbol\phi}\|_2)$.
Since $\Rb_U\bc_\star=\Pi_U\bU$ up to the projected-operator consistency error, the triangle inequality with $\|\bU-\Rb_U\bc_\star\|_2=\|\bU-\Pi_U\bU\|_2$ yields~\eqref{EQ:Operator Inf-bound}.
\end{proof}
The term $\|\bU-\Pi_U\bU\|_2$ is the projection (best-approx) error. Hence, the data-driven model is as accurate as the projection allows, plus a term linear in the operator-learning errors $\delta_M,\delta_N$ (which vanish in the isotropic limit, where the inference is exact).

\subsection{Offline-online algorithms}
\label{SUB:Alg}

We now summarize the standard offline and online stages of the RTE ROMs we proposed. The isotropic and the anisotropic reductions differ only in which reduced variable is compressed, how many bases are built, and which reduced operators are formed. We summarize the common skeleton in~\Cref{ALG:Offline,ALG:Oline} and use~\Cref{Tab:ROM Cases} to instantiate it for each model.

\begin{algorithm}[!htbp]
\caption{Offline stage: model reduction}
\label{ALG:Offline}
\begin{algorithmic}[1]
\renewcommand{\algorithmicrequire}{\textbf{Input:}}
\renewcommand{\algorithmicensure}{\textbf{Output:}}
\Require Training sources $\{f_i\}_{i=1}^{N_s}$, energy threshold $\eta$.
\State Select model and reduced variables (\Cref{Tab:ROM Cases}).
\For{$i=1,\dots,N_s$}
  \State Compute the full-order snapshots of the reduced variables for $f_i$.
  \State Append them to the snapshot matrix.
\EndFor
\State Take SVD of snapshot matrix and set POD basis $\Rb_\bullet$ by energy criterion~\eqref{EQ:Energy}.
\State Form the corresponding reduced operators (\Cref{Tab:ROM Cases}).  
\Ensure Bases and reduced operators as listed in \Cref{Tab:ROM Cases}.
\end{algorithmic}
\end{algorithm}

\begin{algorithm}[!htbp]
\caption{Online stage: predict $\Urom^{\tilde f}$ for an unseen source $\tilde f$}
\label{ALG:Oline}
\begin{algorithmic}[1]
\renewcommand{\algorithmicrequire}{\textbf{Input:}}
\renewcommand{\algorithmicensure}{\textbf{Output:}}
\Require Reduced data from \Cref{ALG:Offline} and source $\tilde f$.
\State Assemble the reduced source by projecting $\sigma_s^{-1}\tilde f$. 
\State Solve the small reduced system for the coordinates $\bc$: a projection M1--M2
       \eqref{EQ:M1}--\eqref{EQ:M2} (isotropic), the block system~\eqref{EQ:ANISO-ROM-A}
       (A) or the inferred system~\eqref{EQ:Operator Inf} (B).
\State \textbf{return} $\Urom^{\tilde f}=\Rb_U\,\bc_U$ (the density / zeroth-moment block).
\Ensure Reduced density $\Urom^{\tilde f}\approx\bU^{\tilde f}$.
\end{algorithmic}
\end{algorithm}

\begin{table}[!htbp]
\centering
\begin{tabular}{lcccc}
\toprule
Model & variable(s) & basis/bases & reduced operator(s) & online size \\
\midrule
Isotropic (M1) & $U$ & $\Rb$ & $[(\bI-\bK)\Rb]^\dagger$ & $r$ \\
Isotropic (M2) & $U$ & $\Rb$ & $\widehat\bK=\Rb^\top\bK\Rb$ & $r$ \\
Aniso A (coupled) & $(U,\psi_\perp)$ & $\Rb_U,\Rb_\Phi$ & $\widehat\bH,\widehat\bG,\widehat\bD,\widehat\bK$ & $r+r_\Phi$ \\
Aniso B (data-driven) & $U$ & $\Rb_U$ & $\widehat\bM,\widehat\bN$ (inferred) & $r$ \\
\bottomrule
\end{tabular}
\caption{Instantiation of the generic offline/online algorithms for each reduced model. The ``online size" column reports the dimension of the reduced solve.}
\label{Tab:ROM Cases}
\end{table}

\section{Full-order discretization}
\label{SEC:FOM}

We now provide some implementation details on the discretization of the full-order models. We follow closely~\cite{ReZhZh-JCP19,FaAnYi-JCP19}.

\subsection{Nystr\"om discretization of the isotropic integral operator}
\label{SUB:nystrom}

On $\Omega=[0,L_x]\times[0,L_y]$ we use a uniform Cartesian grid
$\bx_{ij}=(i\Delta x, j\Delta y)$, $i=0,\dots,N_x-1$, $j=0,\dots,N_y-1$, with
$\Delta x=L_x/(N_x-1)$, $\Delta y=L_y/(N_y-1)$, and $N=N_xN_y$. To each node, we attach the node-centered control volume
\begin{equation*} 
  C_{ij} = [x_i-\ell_i,\,x_i+r_i]\times[y_j-b_j,\,y_j+t_j]\subset\Omega,
\end{equation*}
where $\ell_i,r_i\in\{0,\Delta x/2\}$ and $b_j,t_j\in\{0,\Delta y/2\}$ are halved
at the boundary, so the cell weight $\omega_{ij}=|C_{ij}|$ equals
$\Delta x\Delta y$ at interior nodes, $\tfrac12\Delta x\Delta y$ at edges, and
$\tfrac14\Delta x\Delta y$ at corners (a built-in midpoint/trapezoidal weighting). Flattening $(i,j)\mapsto n=\iota(i,j)=iN_y+j+1$, we write $\bx_n,C_n,\omega_n$ and $U_n\approx U(\bx_n)$, $f_n\approx f(\bx_n)$. The Nystr\"om approximation treats $U$ and $f$ as constant on each cell.

\paragraph{Homogeneous, constant-coefficient case.} Here
\eqref{EQ:RTE Integral Form} reads, with $r=|\bx-\by|$,
\begin{equation*}
  \Big(\cI-\tfrac{\sigma_s}{2\pi}K_0\Big)U=\tfrac{1}{2\pi}K_0 f
  \ \ \text{with}\ \
  (K_0 g)(\bx)=\int_\Omega G(\bx,\by)\,g(\by)\,d\by,\quad
  G(\bx,\by)=\frac{e^{-\sigma_t r}}{r}.
\end{equation*}
We assemble the matrix $\bK_0\in\bbR^{N\times N}$,
$(\bK_0)_{mn}\approx\int_{C_n}G(\bx_m,\by)\,d\by$, by a three-way split that
respects the weak ($1/r$) singularity and the rapid near-field variation of $G$:
\begin{enumerate}
\item \emph{Far field} ($n$ outside a $(2\rho{+}1)\times(2\rho{+}1)$ patch around
  $m$): one-point weighted node evaluation,
  $(\bK_0)_{mn}=\omega_n\,G(\bx_m,\bx_n)=\omega_n\,e^{-\sigma_t|\bx_m-\bx_n|}/|\bx_m-\bx_n|$.
\item \emph{Diagonal} ($m=n$): polar integration about $\bx_n$, $\by=\bx_n+\rho(\cos\theta,\sin\theta)$,
  with $R_n(\theta)=\sup\{\rho\ge0:\bx_n+\rho(\cos\theta,\sin\theta)\in C_n\}$,
  \begin{equation}\label{EQ:Diag}
    (\bK_0)_{nn}
    =\int_0^{2\pi}\!\!\int_0^{R_n(\theta)}\! e^{-\sigma_t\rho}\,d\rho\,d\theta
    =\int_0^{2\pi}\frac{1-e^{-\sigma_t R_n(\theta)}}{\sigma_t}\,d\theta,
  \end{equation}
  which removes the singularity exactly (the Jacobian $\rho$ cancels
  $1/r$) and is evaluated by the periodic trapezoidal rule in $\theta$.
\item \emph{Near field} ($m\neq n$ inside the patch): tensor-product
  Gauss--Legendre quadrature on the clipped cell $C_n=[a_n,b_n]\times[c_n,d_n]$,
  \begin{equation}\label{EQ:Near Field}
    (\bK_0)_{mn}\approx J_n\sum_{\alpha=1}^{n_g}\sum_{\beta=1}^{n_g}
    w_\alpha w_\beta\,\frac{e^{-\sigma_t|\bx_m-\by^{(n)}_{\alpha\beta}|}}{|\bx_m-\by^{(n)}_{\alpha\beta}|},
    \quad J_n=\tfrac14(b_n-a_n)(d_n-c_n),
  \end{equation}
  with $(\xi_\alpha,w_\alpha)$ the Gauss--Legendre nodes/weights on $[-1,1]$ and
  $\by^{(n)}_{\alpha\beta}$ the mapped quadrature points. This resolves the steep
  but integrable near-field profile that the one-point far-field rule misses.
\end{enumerate}
The resulting linear system is
$\big(\cI-\tfrac{\sigma_s}{2\pi}\bK_0\big)\bU=\tfrac{1}{2\pi}\bK_0\bff$.

\paragraph{Inhomogeneous case.} For variable coefficients only $G$ changes: the
constant rate $\sigma_t r$ is replaced by the ray integral
$\tau(\bx,\by)=\int_0^{|\bx-\by|}\sigma_t(\bx-s\,\omxy)\,ds$, which we evaluate by
$n_a$-point Gauss--Legendre quadrature along the segment,
\begin{equation}\label{EQ:tau-h}
  \tau_h(\bx,\by)=|\bx-\by|\sum_{k=1}^{n_a}\varpi_k\,\sigma_{t,h}\!\big(\bx+\eta_k(\by-\bx)\big),
  \qquad G_h(\bx,\by)=\frac{e^{-\tau_h(\bx,\by)}}{r},
\end{equation}
$\sigma_{t,h}$ being the bilinear interpolant of the nodal $\sigma_t$ and
$(\eta_k,\varpi_k)$ the Gauss--Legendre rule on $[0,1]$. The same three-way split
applies with $\sigma_s(\by)$ carried inside the cell integral, giving the
assembled $\bK$ used to form $\widehat\bK=\Rb^\top\bK\Rb$.

The three-way split is designed to preserve, at the discrete level, the two analytic properties on which the ROM rests: the contraction $\|K\|<1$ and the compactness of Lemma~\ref{LEM:Smoothing}. Both hold because the diagonal and near-field rules use only nonnegative weights and integrate the kernel consistently across the $1/r^{d-1}$ singularity.

$\bK$ is dense ($\cO(N^2)$ storage), so direct assembly is feasible only for moderate $N$. The full-order integral solve is not our target. The ROM sidesteps this online: after the one-time offline assembly and projection, only $\widehat\bK\in\bbR^{r\times r}$ is used. For large $N$ one would assemble and apply $\bK$ matrix-free via fast multipole method (FMM)~\cite{ReZhZh-JCP19,FaAnYi-JCP19}. The projection $\Rb^\top\bK\Rb$ then costs $r$ fast applies.

\subsection{Discretization of the anisotropic integral system}
\label{SUBSEC:Nystrom ANISO}

We now discretize the coupled fluctuation system~\eqref{EQ:psiperp-Eq}-\eqref{EQ:U Eq}, reusing the spatial control volumes $\{\bx_n,C_n,\omega_n\}_{n=1}^N$ and the three-way quadrature split above. The new ingredient is the angular dependence carried by $\psi_\perp$ and by the anisotropic kernel $a(\bv,\omxy)$ (specialized below to the rotationally-invariant Henyey-Greenstein model used in the experiments).

\paragraph{Angular grid and the off-grid difficulty.} Use the uniform midpoint grid on $\bbS^1$,
$\theta_\ell=(\ell-\tfrac12)\Delta\theta$, $\Delta\theta=2\pi/M_a$, $\bv_\ell=(\cos\theta_\ell,\sin\theta_\ell)$, $\ell=1,\dots,M_a$, with weights $\mu_\ell=1/M_a$. Let $\Phi_{n\ell}=\psi_\perp(\bx_n,\bv_\ell)$ and $f_{n\ell}=f(\bx_n,\bv_\ell)$. The operators $H,G$ in~\eqref{EQ:HG-ops} evaluate the kernel at the geometric direction $\varphi_m(\by)=\arg(\bx_m-\by)$, which is generally not an angular grid node. We therefore use periodic piecewise-linear interpolation: for $\varphi$ with $\theta_{\ell_0}\le\varphi<\theta_{\ell_0+1}$ (periodic indexing), $\alpha(\varphi)=(\theta_{\ell_0+1}-\varphi)/\Delta\theta$, $\beta(\varphi)=1-\alpha(\varphi)$, define the interpolation row $L(\varphi)\in\bbR^{1\times M_a}$ with the only nonzero entries $L_{\ell_0}=\alpha(\varphi)$, $L_{\ell_0+1}=\beta(\varphi)$, so that $L(\varphi)\,\bz=\alpha\,z_{\ell_0}+\beta\,z_{\ell_0+1}$ interpolates angular data $\bz\in\bbR^{M_a}$ at $\varphi$.

\paragraph{Discrete anisotropic kernel.} At angle $\varphi$ define the normalized discrete Henyey-Greenstein weights and their zero-mean part,
\begin{equation}\label{EQ:Disc HG}
  p^{(M_a)}_\ell(\varphi)=\frac{\frac{1-g^2}{1+g^2-2g\cos(\theta_\ell-\varphi)}}
       {\frac{1}{M_a}\sum_{k=1}^{M_a}\frac{1-g^2}{1+g^2-2g\cos(\theta_k-\varphi)}},
  \quad
  a^{(M_a)}_\ell(\varphi)=p^{(M_a)}_\ell(\varphi)-1,
  \quad \tfrac{1}{M_a}\sum_{\ell=1}^{M_a} a^{(M_a)}_\ell(\varphi)=0,
\end{equation}
where the denominator enforces the discrete normalization $\frac{1}{M_a}\sum_\ell
p^{(M_a)}_\ell=1$, so the zero-angular-mean property~\eqref{EQ:HG-zeromean} holds exactly at the discrete level. With the variable-coefficient attenuation $W_h(\bx,\by)=e^{-\tau_h(\bx,\by)}/(2\pi|\bx-\by|)$ from~\eqref{EQ:tau-h}, the exact per-cell integral blocks are
\begin{equation*}
\begin{aligned}
  (K^\ast)_{mn} &= \int_{C_n}\sigma_s(\by)\,W_h(\bx_m,\by)\,d\by, \quad
  (D^\ast)_{mn} = \int_{C_n}\sigma_s(\by)\,W_h(\bx_m,\by)\,L(\varphi_m(\by))\,d\by,\\
  (G^\ast)_{mn} &= \int_{C_n}\sigma_s(\by)\,W_h(\bx_m,\by)\,a^{(M_a)}(\varphi_m(\by))\,d\by, \\
  (H^\ast)_{mn} &= \int_{C_n}\sigma_s(\by)\,W_h(\bx_m,\by)\,a^{(M_a)}(\varphi_m(\by))\otimes L(\varphi_m(\by))\,d\by,
\end{aligned}
\end{equation*}
where $a^{(M_a)}(\varphi)\in\bbR^{M_a}$ is the column of weights~\eqref{EQ:Disc HG}, $L(\varphi)\in\bbR^{1\times M_a}$ the interpolation row, and $\otimes$ the outer product. The assembled matrices $\bK,\bD,\bG,\bH$ are the quadrature realizations of the exact per-cell blocks $K^\ast,D^\ast,G^\ast,H^\ast$ above. Thus $\bK\in\bbR^{N\times N}$ acts on the density $\bU$.
$\bD\in\bbR^{N\times NM_a}$ maps the angular fluctuation to its
$\omxy$-sampled contribution to $U$ (the discrete realization of the isotropic operator $K$ acting on the angularly resolved field), and
$\bG\in\bbR^{NM_a\times N}$, $\bH\in\bbR^{NM_a\times NM_a}$ realize $G$ and $H$. Each block is assembled by the same far-field/diagonal-polar/near-field-Gauss--Legendre split as in the isotropic case.

\section{Numerical experiments}
\label{SEC:Num}

We now present some numerical simulations to demonstrate the performance of the ROMs we developed. We focus on problems in the two-dimensional unit square $\Omega=(0,1)^2$. We cover the domain with a uniform $N_x=N_y=65$ grid (that is, $\Delta x=\Delta y=1/64$), i.e.\ $N=N_xN_y=4225$ spatial degrees of freedom in the sense of \Cref{SEC:FOM}. We consider training data from two families of source functions. The first is \emph{localized} isotropic Gaussian sources of the form
\begin{equation}\label{EQ:Source Gaussian}
  f_i(\bx;\bx_i,\varsigma^2) = \exp\!\Big(-\frac{|\bx-\bx_i|^2}{2\varsigma^2}\Big),
\end{equation}
with centers $\bx_i$ drawn uniformly over $\Omega$ (approaching point sources as $\varsigma\to0$). The second family is \emph{global} source functions built from random Fourier features,
\begin{equation}\label{EQ:Source Fourier}
f_i(\bx)=\sum_{\bk\in[-64,64]^2\cap\bbZ^2} \beta_{i,\bk} e^{i\pi \bk \cdot\bx}, \quad \beta_{i,-\bk}=\overline{\beta_{i,\bk}}
\end{equation}
with random coefficients $\{\beta_{i,\bk}\}$ drawn from Gaussian with $\bbE|\beta_{i,\bk}|\sim |\bk|^{-2}$ for $\bk\neq 0$ (the $\bk=0$ mean is set to a fixed positive constant) for each $i$, enriching the data with smooth, domain-wide patterns. We scale the random Fourier sources~\eqref{EQ:Source Fourier} so that $f_i\ge 0$ to make them physically relevant (even though this is mathematically unnecessary). Note that the Gaussian sources~\eqref{EQ:Source Gaussian} are bounded and holomorphic in their low-dimensional parameter (the center $\bx_i$), so Proposition~\ref{PROP:n-Width} directly guarantees a fast $n$-width decay. The random Fourier sources~\eqref{EQ:Source Fourier} are likewise holomorphic in their coefficients, but their parameter dimension is large. For that family, the compressibility of the density manifold is governed less by Proposition~\ref{PROP:n-Width} than by the $|\bk|^{-2}$ spectral decay of the coefficients together with the smoothing of Lemma~\ref{LEM:Smoothing} (as the experiments below confirm).

Throughout this section, we parametrize the scattering regime by the contraction constant $\gamma_0=\sup_\bx\sigma_s/\sigma_t$ of~\eqref{EQ:Assumption}, which is the quantity that governs both the well-posedness and the error amplification $(1-\gamma_0)^{-1}$
of \Cref{THM:Cea,COR:SVD Control}. Unless stated otherwise, we fix $\sigma_a=0.1$ and choose $\sigma_s$ to realize $\gamma_0\in\{0.5,0.8,0.95,0.99\}$ (transport-, intermediate-, scattering-dominated, and near-critical regimes, with amplifications
$2,5,20,100$).
 
The reduced rank $r$ in all the simulations is chosen by the energy criterion~\eqref{EQ:Energy} with $\eta$ as stated. We report the relative error of the ROMs by averaging over two disjoint source sets: the $N_s$ \emph{training} sources $\{f_i\}_{i=1}^{N_s}$
used to build the reduced basis, and a held-out set of $N_{\rm test}$ \emph{unseen} test sources $\{\tilde f_i\}_{i=1}^{N_{\rm test}}$ drawn from the same family but not used in the offline stage. To be precise, we write $U_F^{f}$ and $U_R^{f}$ for the full-order and reduced densities driven by source $f$. This yields the \emph{in-sample} and \emph{out-of-sample} errors
\begin{equation}\label{EQ:eps-in-out}
  \eps_{\rm in} := \frac{1}{N_s}\sum_{i=1}^{N_s}
    \frac{\|U_F^{f_i}-U_R^{f_i}\|_{L^2}}{\|U_F^{f_i}\|_{L^2}},
  \qquad
  \eps_{\rm out} := \frac{1}{N_{\rm test}}\sum_{i=1}^{N_{\rm test}}
    \frac{\|U_F^{\tilde f_i}-U_R^{\tilde f_i}\|_{L^2}}{\|U_F^{\tilde f_i}\|_{L^2}}.
\end{equation}
The in-sample error $\eps_{\rm in}$ measures how well the reduced basis represents the training manifold, while the out-of-sample error $\eps_{\rm out}$ measures generalization to unseen sources.  

\subsection{ROM for isotropic medium}
We start with numerical experiments on ROM for isotropic media.
\subsubsection{Homogeneous isotropic medium}\label{sec:num-hom0-iso}

In the first set of simulations, we consider the ROM in a homogeneous isotropic medium with a fixed constant absorption coefficient $\sigma_a=0.1$ and a constant scattering coefficient $\sigma_s$, and control the scattering strength through  $\gamma_0$. We consider four different scattering strengths corresponding to $\gamma_0\in\{0.5,0.8,0.95,0.99\}$.

\begin{figure}[!htbp]
  \centering
  \includegraphics[width=0.8\linewidth]{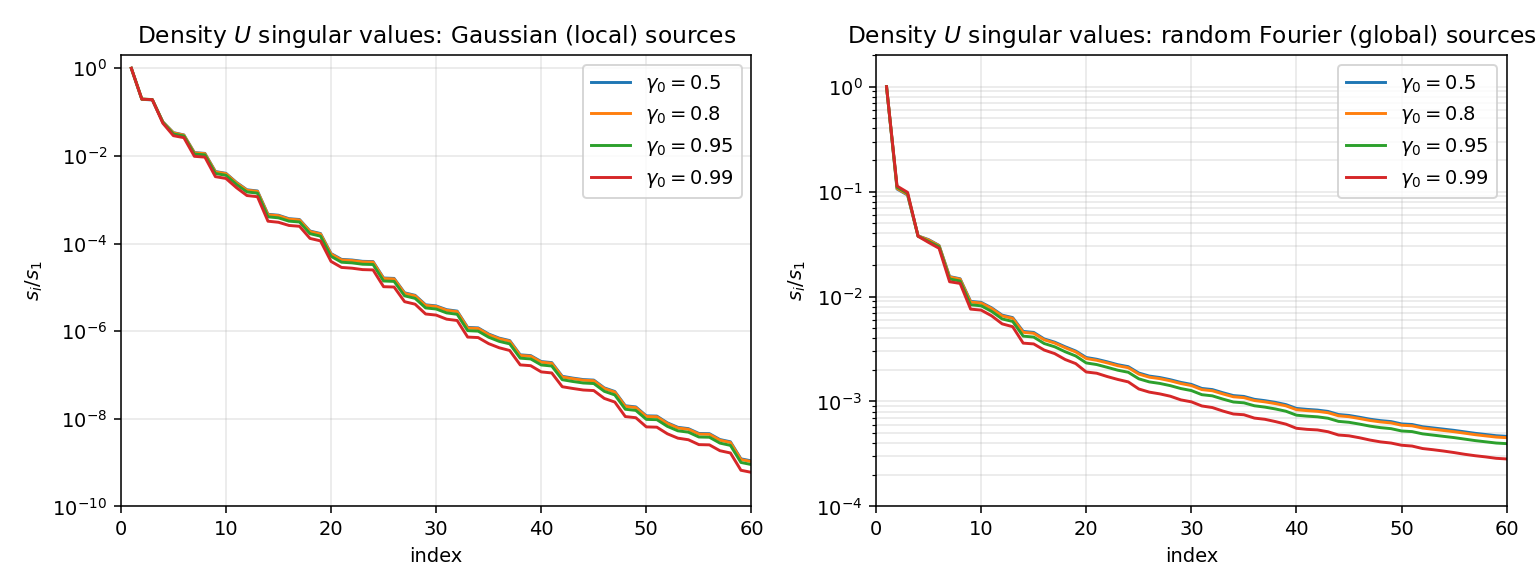}
  \caption{Normalized singular values of density snapshot matrix generated from Gaussian (left) and random Fourier (right) sources across the regime ladder $\gamma_0\in\{0.5,0.8,0.95,0.99\}$.}
  \label{FIG:Reducibility}
\end{figure}

To verify the reducibility of the density manifold, we show in~\Cref{FIG:Reducibility} the rapid, regime-robust normalized singular-value decay of the \emph{density} snapshots across the $\gamma_0$ ladder.
\begin{figure}[!htbp]
  \centering
  	\begin{tabular}{@{}c@{\hspace{0.8em}} @{\hspace{0.8em}}c@{}}
  	\textbf{\small Local Gaussian sources} & \textbf{\small Global random-Fourier sources} \\[0.2ex]
  \includegraphics[width=0.47\linewidth]{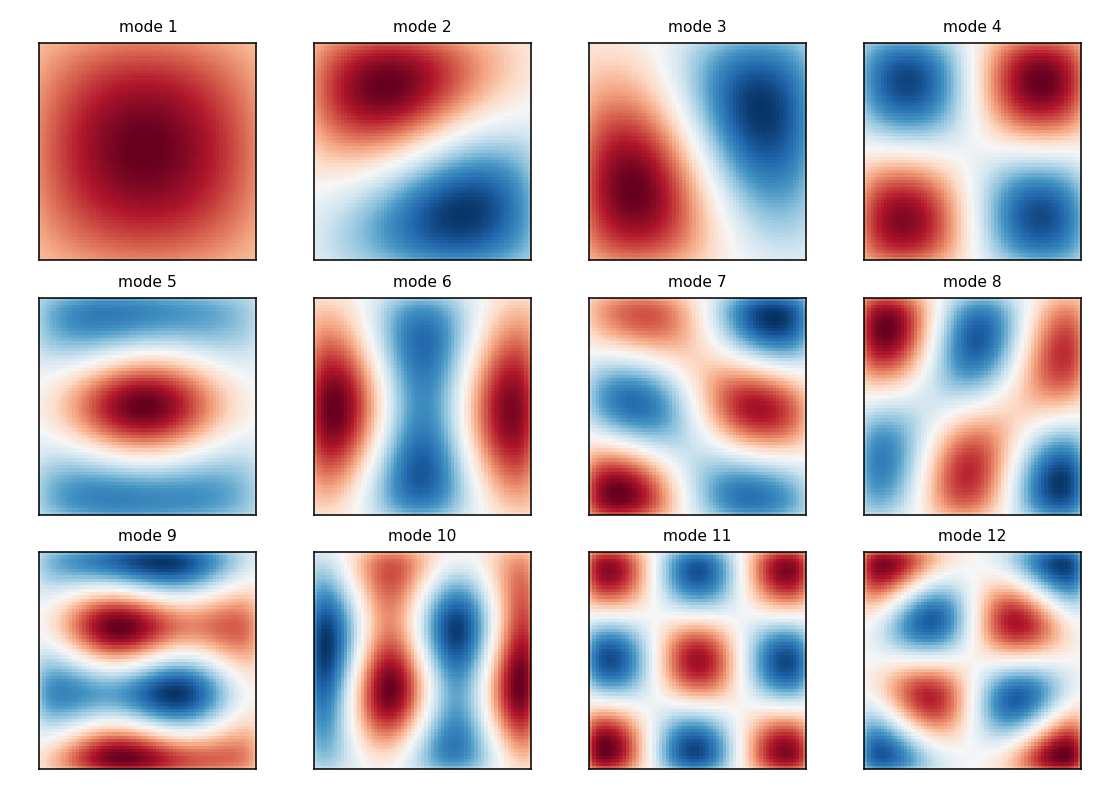}&
\includegraphics[width=0.47\linewidth]{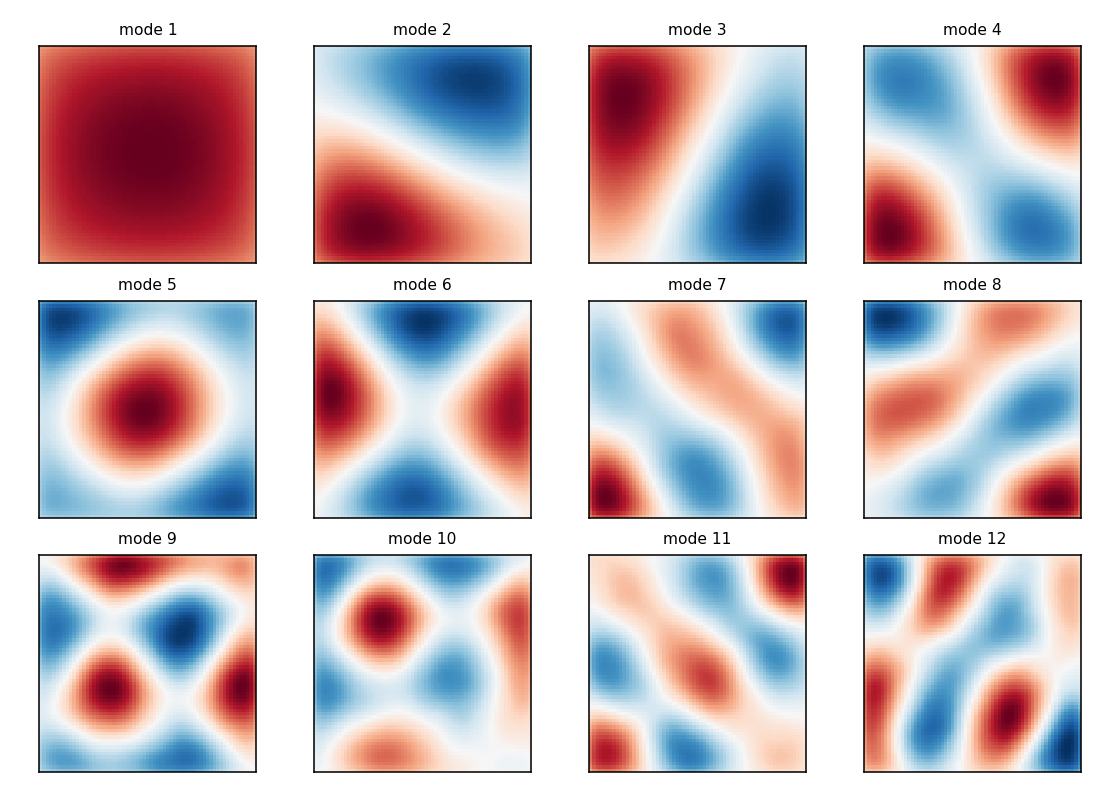}
  \end{tabular}
  \caption{Leading POD modes (left singular vectors of the data matrix in intermediate regime $\gamma_0=0.8$) for local Gaussian (left) and global random-Fourier (right) training sources.}
  \label{FIG:POD-Basis}
\end{figure}
Leading POD modes (left singular vectors of the data matrix) are shown in Figure~\ref{FIG:POD-Basis} for an intermediate regime $\gamma_0=0.8$ for local Gaussian (left) and global random-Fourier (right) training sources. The first mode is a smooth, sign-definite bump, and higher modes add increasingly oscillatory but still smooth structure.

\begin{table}[!htbp]
  \centering
  \begin{minipage}[b]{0.45\textwidth}
    \centering
\begin{tabular}{cccc}
\toprule
source $\varsigma^2$ & $r$ & $\eps_{\rm in}$ & $\eps_{\rm out}$ \\
\midrule
0.025 & 15 & 2.95e-02 & 2.68e-02 \\
0.05  &  8 & 3.13e-02 & 2.99e-02 \\
0.10  &  6 & 1.81e-02 & 1.69e-02 \\
0.20  &  3 & 2.83e-02 & 2.67e-02 \\
\bottomrule
\end{tabular}
    \caption{Dependence of errors on variance of source in intermediate regime of $\sigma_a=0.1$ and $\gamma_0=0.8$.}
\label{TAB:Variance}
  \end{minipage}
  \hfill
  \begin{minipage}[b]{0.45\textwidth}
    \centering
\begin{tabular}{ccccc}
\toprule
$\gamma_0$ & $(1-\gamma_0)^{-1}$ & $r$ & $\eps_{\rm in}$ & $\eps_{\rm out}$ \\
\midrule
0.50 & 2& 8 & 3.19e-02 & 3.05e-02\\
0.80 & 5 & 8 & 3.13e-02 & 2.99e-02\\
0.95 & 20 & 8 & 2.89e-02 & 2.75e-02\\
0.99& 100 & 8 & 2.52e-02 & 2.38e-02\\
\bottomrule
\end{tabular}
\caption{Dependence of errors on the strength of scattering. $\sigma_a=0.1$ and $\sigma_s$ is chosen to realize $\gamma_0$. 
}
\label{TAB:Sigmas}
  \end{minipage}
\end{table}

\Cref{TAB:Variance} fixes $N_s=800$ and $\eta=0.99$ and varies the source variance $\varsigma^2$: narrower sources ($\varsigma^2=0.025$) excite finer spatial features and need a larger rank ($r=15$) for the same energy, while broader sources are captured by very few modes ($r=3$ at $\varsigma^2=0.2$). This is a direct illustration of the Kolmogorov-width dependence on the source dictionary. \Cref{TAB:Sigmas} sweeps the regime ladder $\gamma_0\in\{0.5,0.8,0.95,0.99\}$ at fixed local sources (with $\varsigma^2=0.05$). We again fix $N_s=800$ and $\eta=0.99$. The rank stays at $r=8$, and the energy-truncated error stays at the few-percent level across the whole ladder, even as the worst-case amplification $(1-\gamma_0)^{-1}$ of Corollary~\ref{COR:SVD Control} grows from $2$ to $100$, demonstrating that the reduction is effective well outside the diffusive regime. The error at fixed $\eta$ is the energy-truncated value. Pushing the rank to $r=20$-$40$ drives it to $10^{-3}$--$10^{-5}$, as the effectivity study of~\Cref{fig:effectivity} in next subsection shows.

\begin{figure}[!htb]
  \centering
  \includegraphics[width=0.8\linewidth]{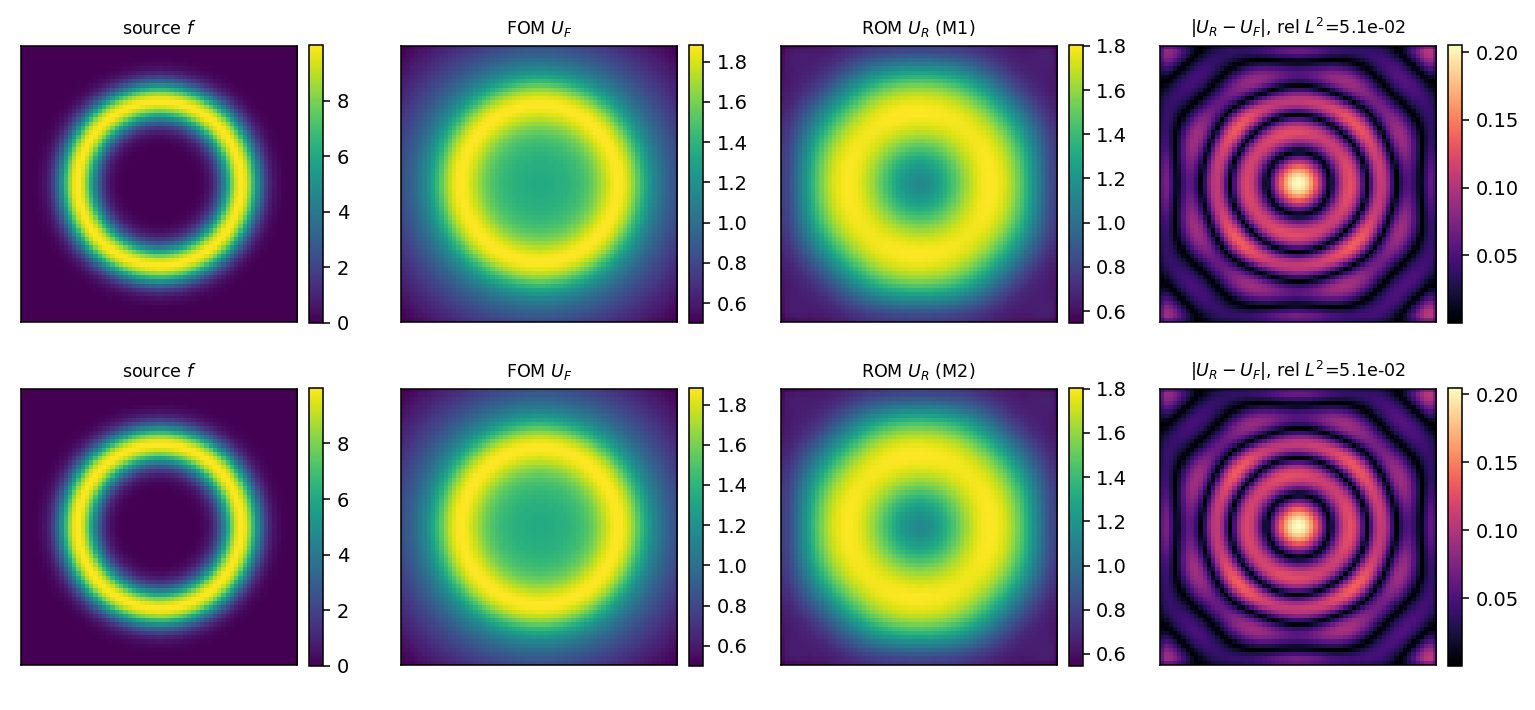}
  \caption{Testing of (M1) (top) and (M2) (bottom) on unseen ring source from local training. The mean is homogeneous with $\sigma_a=0.1$ and $\gamma_0=0.8$. The rank is selected as $r=40$. Shown from left to right are source $f$, FOM density $U_F$, ROM density $U_R$, and pointwise error $|U_R-U_F|$.
  }
  \label{FIG:Homo Reduction}
\end{figure}
In~\Cref{FIG:Homo Reduction}, we show a representative FOM/ROM comparison. Shown are online results on a source that was not included in the training data set. At $r=40$, FOM density $U_F$ and reduced density $U_R$ differ by approximately $5.1\times 10^{-2}$ in relative $L^2$ on this unseen ring source, which lies outside the local Gaussian training family, with M1 (top) and M2 (bottom) giving essentially identical errors, consistent with the energy-optimality of M2 given in Corollary~\ref{COR:ROM Energy}.

\begin{figure}[!htb]
  \centering
  \includegraphics[width=0.8\linewidth]{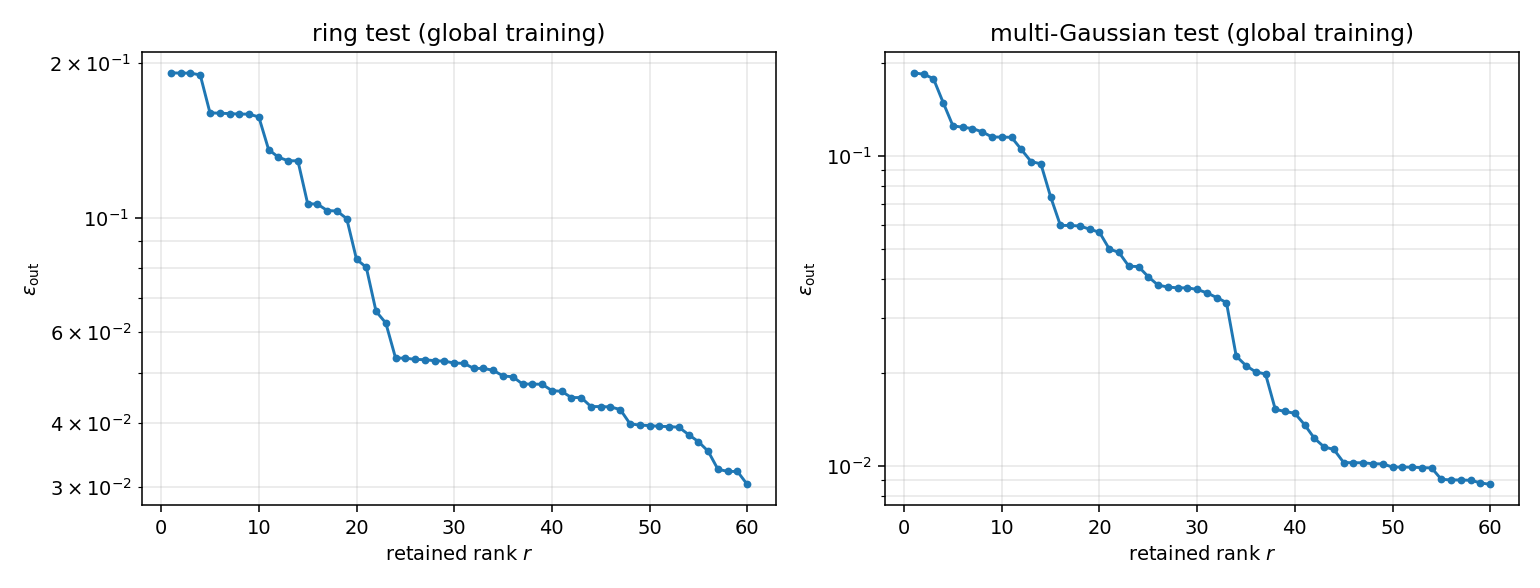}
  \caption{Out-of-sample error vs.\ retained rank for \emph{global} (random-Fourier)
  training sources on a ring test (left) and a multi-Gaussian test (right),
  $\gamma_0=0.8$.}
  \label{FIG:Error-vs-Rank}
\end{figure}
The dependence of the error on the retained rank, together with the failure of the energy criterion for global sources noted above, is shown in~\Cref{FIG:Error-vs-Rank}. In contrast to the spectral decay obtained with local training, the error falls only slowly and in steps, indicating that global-trained bases generalize poorly to localized test sources. The energy criterion (which selects $r\approx3$ at $\eta=0.99$ here), therefore, stops far too early.

M1 (solution-space least squares) and M2 (Galerkin) give essentially identical errors across all regimes, as predicted by the energy-optimality of the (weighted-)symmetric stationary operator (see Lemma~\ref{LEM:Symm} and Corollary~\ref{COR:ROM Energy}). We therefore use M2 as the default. 

\subsubsection{Inhomogeneous isotropic medium}
\label{SUBSEC:Num Inhomog}

For spatially varying coefficients, we take the smooth medium
\begin{equation}\label{EQ:Var Coeff}
  \sigma_s(\bx)=3+0.2\sin(2\pi x)\cos(2\pi y),\qquad
  \sigma_a(\bx)=0.2+0.1(x+y),\qquad \sigma_t=\sigma_s+\sigma_a,
\end{equation}
(so $\gamma_0=\sup\sigma_s/\sigma_t\approx0.94$), and a discontinuous variant in which a rectangular subregion is turned into a zero-scattering void ($\sigma_s\equiv0$, i.e.\ $\gamma_0=0$ locally). The attenuation $E(\bx,\by)$ is precomputed by ray-marching $\sigma_t$ via~\eqref{EQ:tau-h}. Unseen multi-bump sources are predicted at the few-percent level. The coefficient, FOM solution, and ROM solution are shown in~\Cref{FIG:Var Coeff} and the
error-vs-rank curves are shown in~\Cref{FIG:Var Coeff Error}. The results are entirely consistent with the constant-coefficient study: the Galerkin error decays spectrally to approximately $5\times
10^{-7}$ at $r=40$ in \emph{both} the smooth and the void media, so the framework is robust even where the kernel locally degenerates.
\begin{figure}[!htb]
	\centering
	\begin{tabular}{@{}c@{\hspace{0.8em}} @{\hspace{0.8em}}c@{}}
		\textbf{\small (a) Smooth medium} & \textbf{\small (b) Void medium} \\[0.2ex]
		\includegraphics[width=0.49\linewidth]{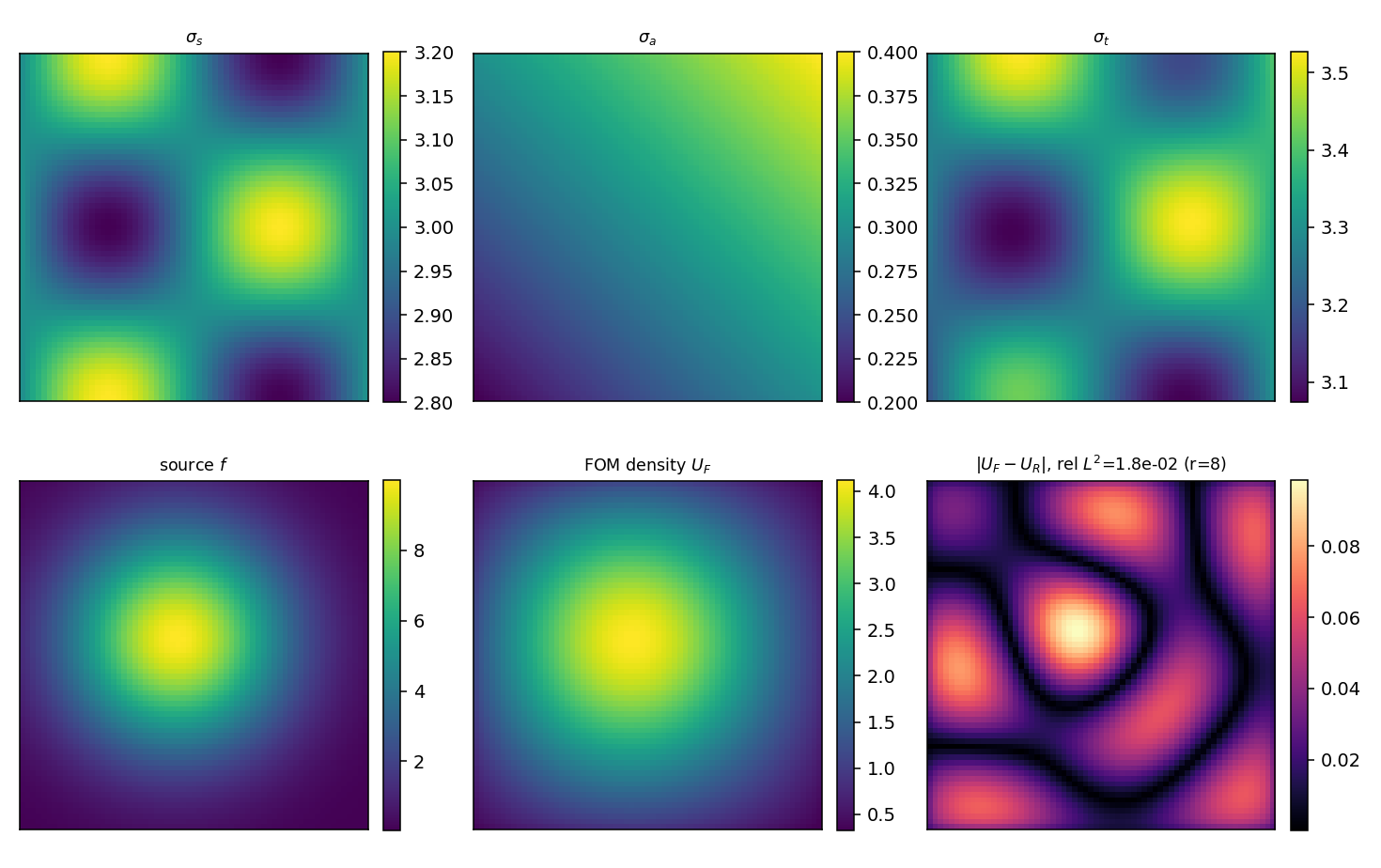} &
		\includegraphics[width=0.49\linewidth]{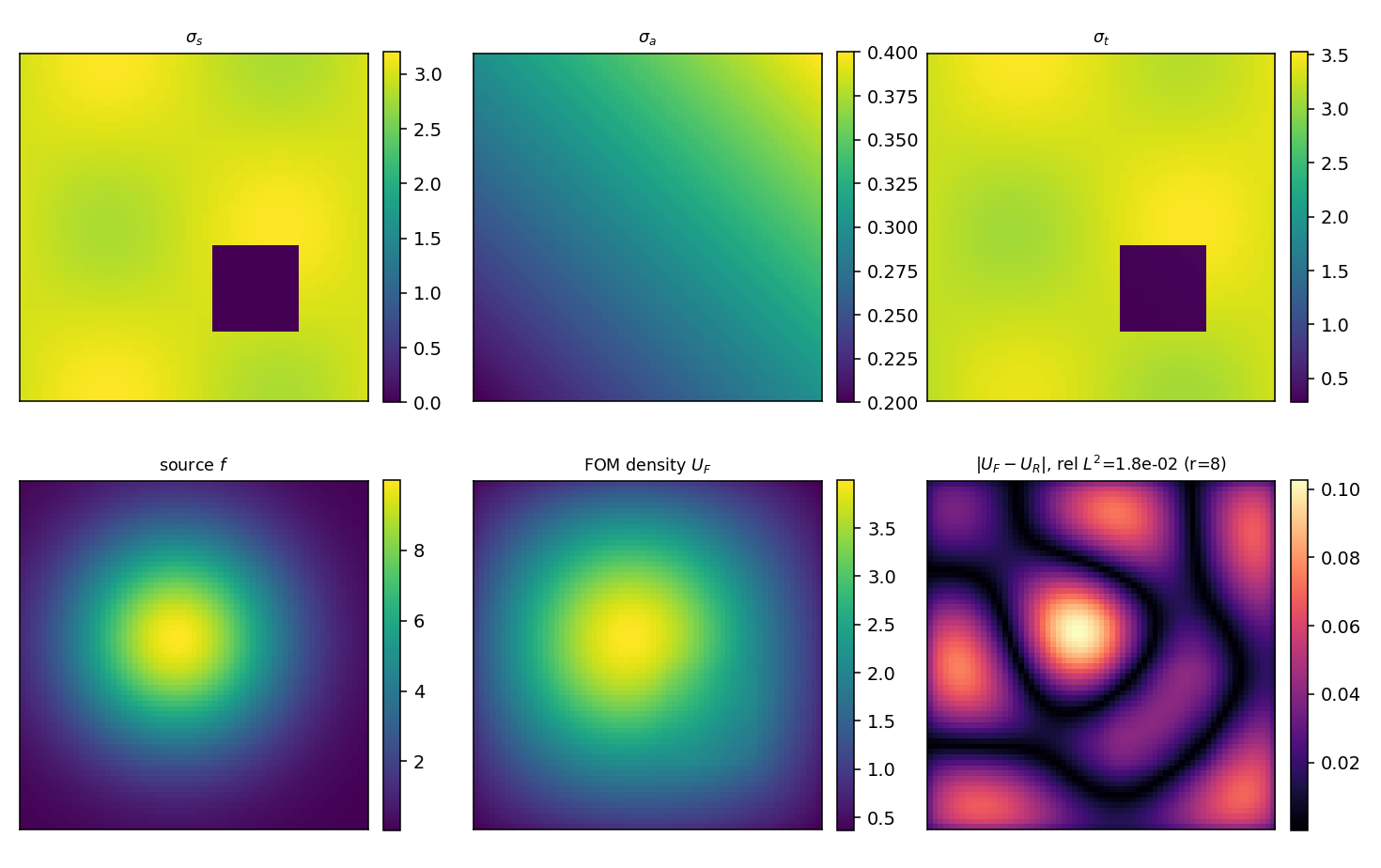}
	\end{tabular}
	\caption{Comparison of FOM and ROM (Galerkin, $r=8$) for the smooth heterogeneous medium~\eqref{EQ:Var Coeff} or for the medium with a zero-scattering void ($\sigma_s\equiv0$ in a box, $\gamma_0=0$ locally). For each subfigure, shown are $(\sigma_s, \sigma_a, \sigma_t)$ (top) and $(f, U_F, |U_F-U_R|)$ (bottom).}
	\label{FIG:Var Coeff}
\end{figure} 

\begin{figure}[!htb]
  \centering
  \includegraphics[width=0.8\linewidth]{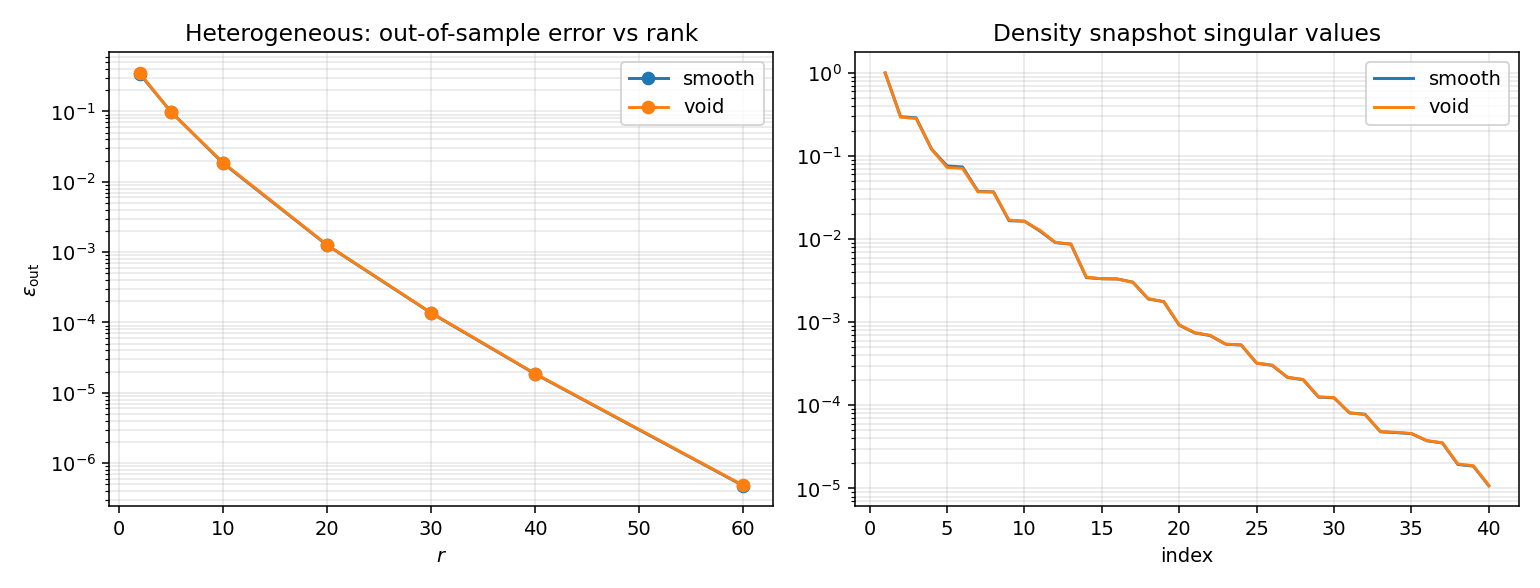}
  \caption{Out-of-sample Galerkin error vs.\ rank (left) and the corresponding density snapshot singular
  values (right) for the smooth and zero-scattering-void media of~\Cref{FIG:Var Coeff}.}
  \label{FIG:Var Coeff Error}
\end{figure}

\subsubsection{Stability and error analysis} 
\begin{figure}[!htb]
	\centering
	\includegraphics[width=0.8\linewidth]{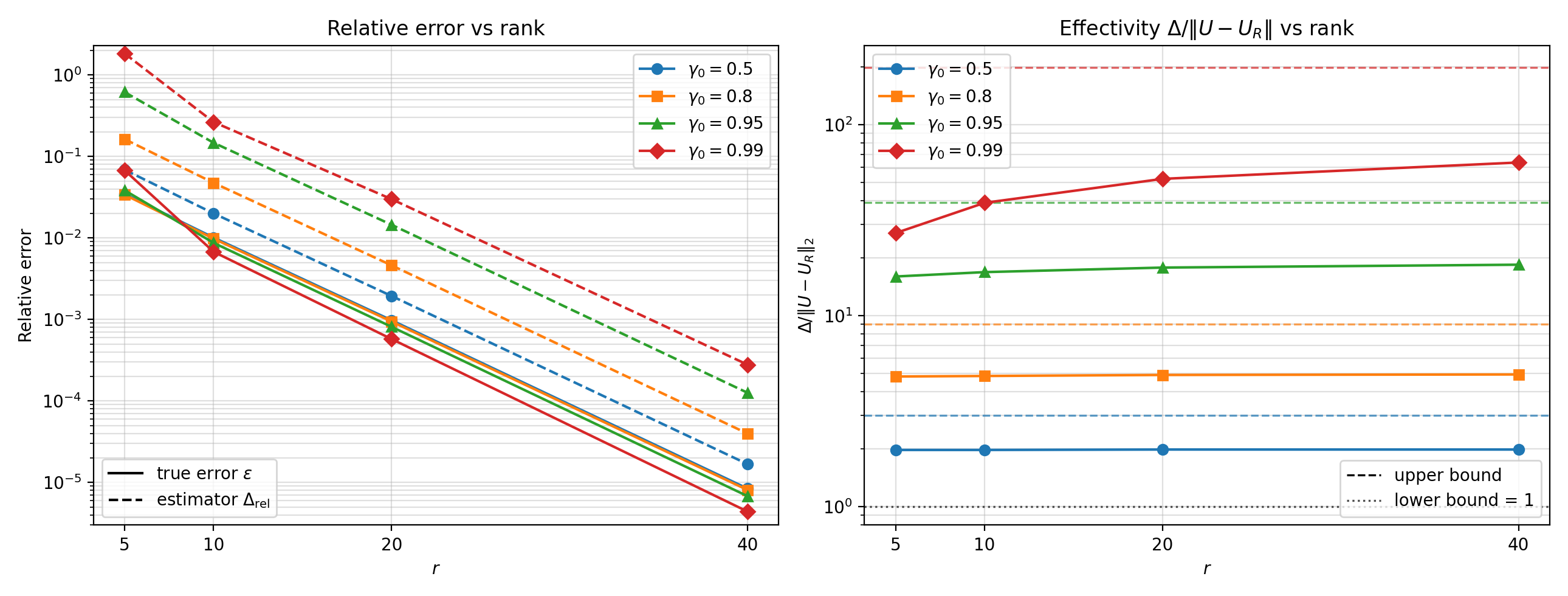}
	\caption{Left: true relative error $\eps$ (solid) and certified estimator $\Delta_{\rm rel}$ (dashed) vs.\ rank $r$ for each regime ladder $\gamma_0\in\{0.5,0.8,0.95,0.99\}$ (spectral
		decay). Right: the certified effectivity index $\Delta/\|U-\Urom\|_2$ vs.\ $r$,
		remaining within $[1,(1+\gamma_0)/(1-\gamma_0)]$ and increasing with $\gamma_0$
		(\Cref{THM:aposteriori}). Horizontal dashed lines are the theoretical lower bound $1$ and upper bound $(1+\gamma_0)/(1-\gamma_0)$ (same color as the solid lines with the same $\gamma_0$).
	}
	\label{fig:effectivity}
\end{figure}

In~\Cref{fig:effectivity}, we verify the certified residual estimator $\Delta(\Urom)=\|\br\|_2/(1-\kappa)$ of Theorem~\ref{THM:aposteriori}. Because $\kappa\le \gamma_0$ is known in closed form, $\Delta$
requires no eigenvalue estimation and is computed online in $\cO(r^2)$. 
We use the same setup as in the homogeneous isotropic medium in~\Cref{sec:num-hom0-iso}.
In the left subfigure of \Cref{fig:effectivity}, we plot, for each regime of the $\gamma_0$-ladder and several
ranks, the true relative error $\eps=\|U-\Urom\|_2/\|U\|_2$ and the relative estimator
$\Delta_{\rm rel}=\Delta/\|U\|_2$. The true relative errors $\eps$ are below the corresponding relative estimator
$\Delta_{\rm rel}$, demonstrating the first estimation in Theorem~\ref{THM:aposteriori}. In addition, the true error $\eps$ falls spectrally with $r$ (e.g.\ $3.5\!\times\!10^{-2}$ decreases to $8\!\times\!10^{-6}$ for
$r$ increases from $5$ to $40$ in the transport regime $\gamma_0=0.5$).
In the right subfigure of \Cref{fig:effectivity}, we plot the effectivity index $\Delta/\|U-\Urom\|_2$ (solid lines) and its theoretical lower bound ($=1$, dotted line) and upper bounds ($(1+\gamma_0)/(1-\gamma_0)$, dashed lines) in the second part of Theorem~\ref{THM:aposteriori}, for different regimes and ranks.
In every one of the $16$ cases the measured effectivity $\Delta/\|U-\Urom\|_2$ lies in the guaranteed interval $[1,(1+\gamma_0)/(1-\gamma_0)]$. It grows toward the ceiling as scattering strengthens, exactly tracking the $\tfrac{1+\gamma_0}{1-\gamma_0}$ conditioning. 

\begin{figure}[!htb]
	\centering
	\includegraphics[width=0.45\linewidth]{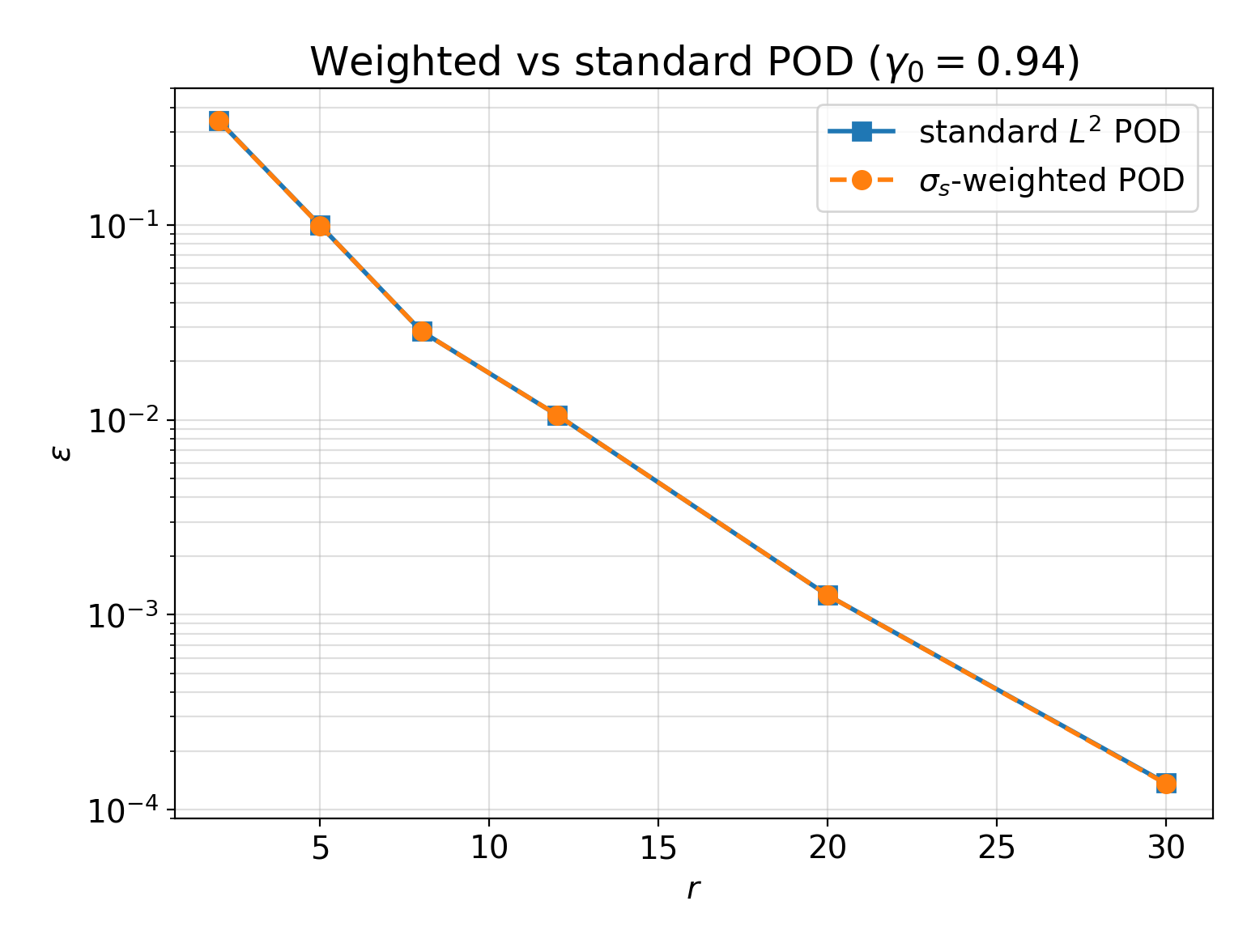}
	\caption{Standard $L^2$ vs.\ $\sigma_s$-weighted POD Galerkin error on the smooth heterogeneous medium. The two curves coincide because $\sigma_s$ is nearly constant here (see Corollary~\ref{COR:ROM Energy}).
	}
	\label{Fig:Weighted}
\end{figure}
Corollary~\ref{COR:ROM Energy} predicts that computing the POD in the $\sigma_s$-weighted inner product of Lemma~\ref{LEM:Symm} symmetrizes the reduced operator and renders the Galerkin ROM the energy-orthogonal projection, improving the worst-case error constant from
$\tfrac{1+\kappa}{1-\kappa}$ to its square root. For a homogeneous medium, the weighted
and standard inner products coincide. We therefore run this comparison on the smooth heterogeneous medium of \Cref{SUBSEC:Num Inhomog}, where $\sigma_s$ varies. \Cref{Fig:Weighted} reports the two POD inner products at matched rank. The improvement is a worst-case-constant effect, and here it is marginal: on this medium $\sigma_s$ varies only by $7\%$ about its mean, so the $\sigma_s$-weighted and standard $L^2$ inner products are nearly proportional, and the leading POD subspaces (hence the realized $L^2$ errors) are essentially identical (gain $\eps_{L^2}/\eps_{\sigma_s} \approx 1.00$ at every rank). The energy-norm optimality of Corollary~\ref{COR:ROM Energy} still holds exactly. It simply does not translate into a visible $L^2$ gain unless $\sigma_s$ is strongly heterogeneous and the regime is near-critical. We report the null
result rather than engineer a contrived high-contrast medium to inflate it. 

\subsection{ROM for anisotropic medium}
\label{SUBSEC:Num ANISO}

For anisotropic scattering, we use the Henyey-Greenstein phase function with anisotropy $g\in\{0,0.3,0.5,0.7,0.9\}$, spanning isotropic to strongly forward-peaked regimes. We verify that the anisotropic integral discretization of~\Cref{SUBSEC:Nystrom ANISO} reduces to the isotropic solve at $g=0$ (relative difference $4.5\times10^{-11}$). Two predictions of the analysis are borne out in~\Cref{FIG:ANISO g-Sweep}. First, anisotropy is an $\cO(\|p-1\|_\infty)=\cO(g)$ perturbation of the density: the measured $\|U_g-U_0\|/\|U_0\|$ grows essentially linearly in $g$ ($2.2\%,3.5\%,4.7\%,5.8\%$ at $g=0.3,0.5,0.7,0.9$, i.e. a slope of $\approx 0.06-0.07$ per unit $g$), confirming the prediction of Proposition~\ref{PROP:ANISO Pert}. Second, the density manifold stays low-dimensional: the energy rank is $r=8$ for $g\le0.7$ and only $r=9$ at $g=0.9$, so the reduced anisotropic basis is essentially the size of the isotropic one, confirming the prediction of Corollary~\ref{COR:ANISO n-Width}.
\begin{figure}[!htbp]
  \centering
  \includegraphics[width=0.8\linewidth]{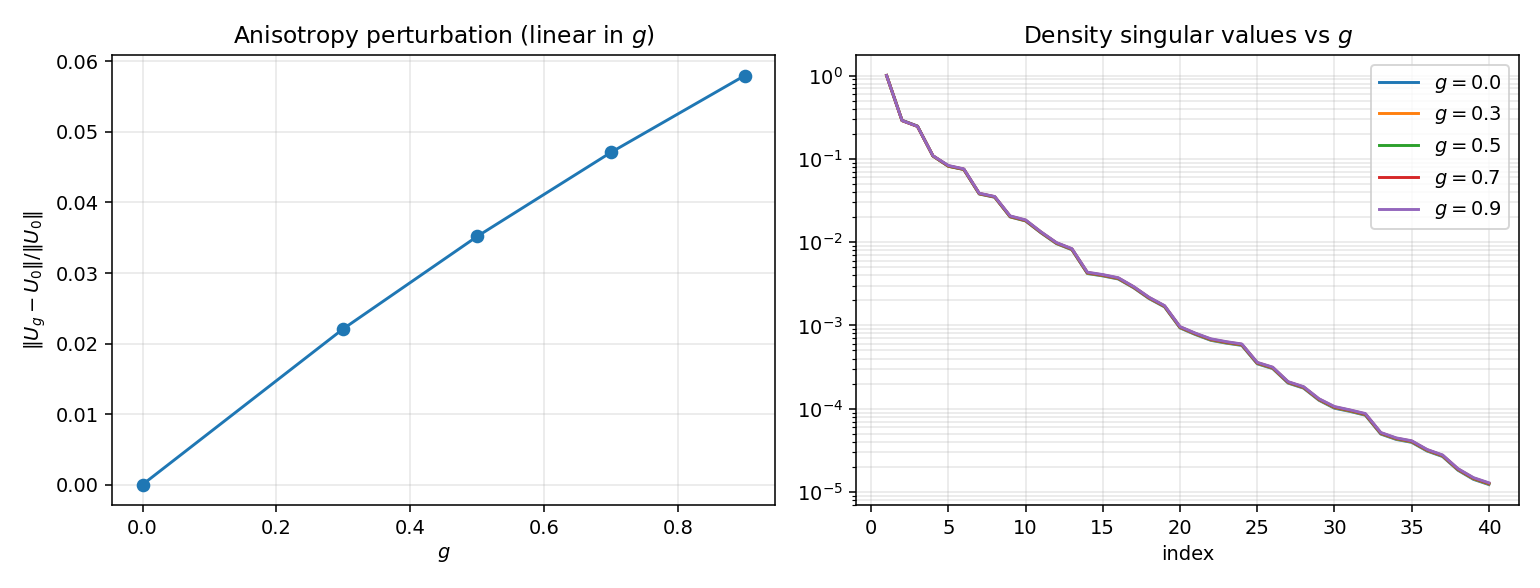}
  \caption{$\|U_g-U_0\|/\|U_0\|$ vs.\ $g$ (left) and the corresponding snapshot matrix singular values (right).}
  \label{FIG:ANISO g-Sweep}
\end{figure}

\begin{figure}[!htbp]
  \centering
  \includegraphics[width=0.9\linewidth]{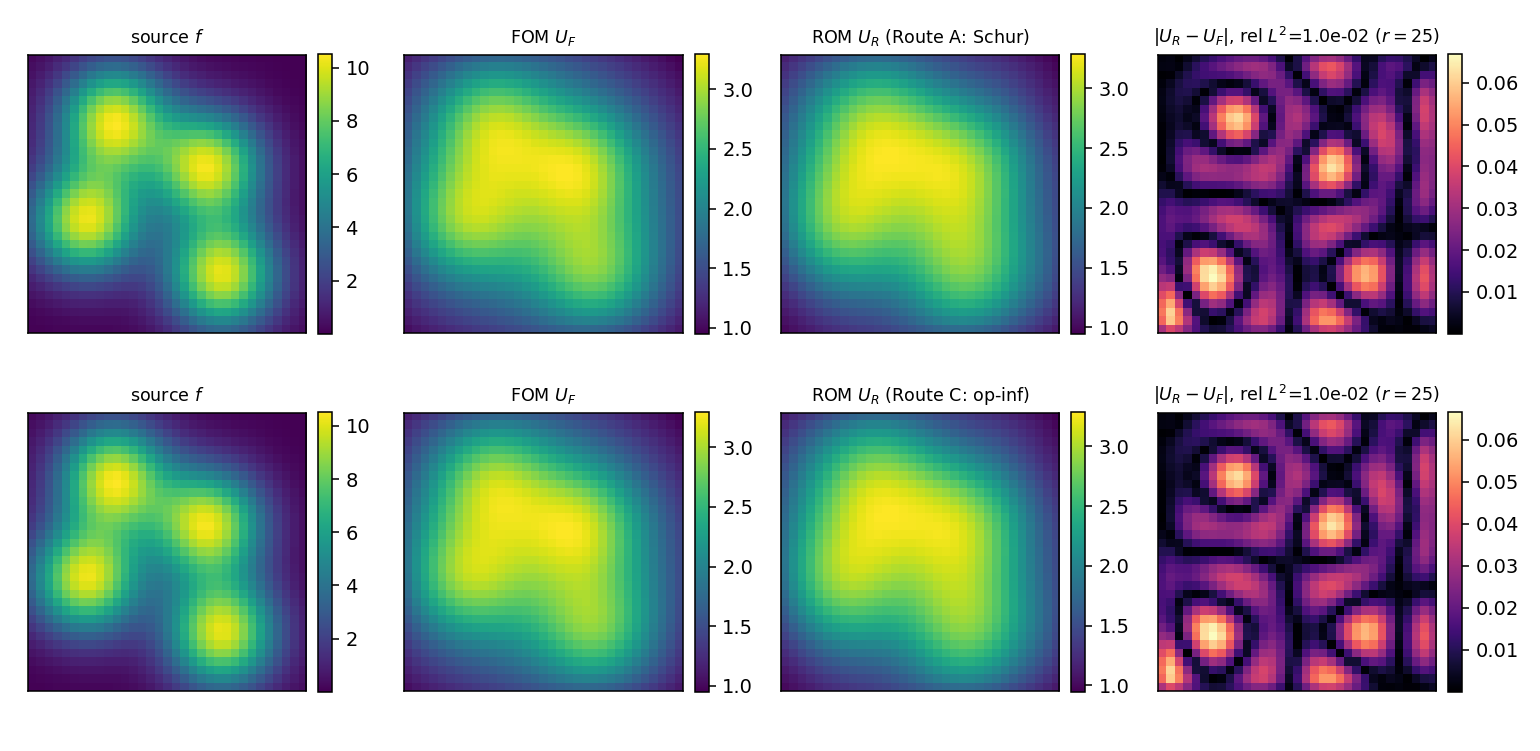}
  \caption{Comparison of FOM and ROM for an anisotropic medium with $g=0.7$ and $\gamma_0=0.8$. Shown from left to right are: source $f$, FOM density $U_F$, ROM density $U_R$, and pointwise error $|U_R-U_F|$. Top row: Route~A ROM; Bottom row: Route~B ROM.}
  \label{FIG:ANISO Reduction}
\end{figure}
\Cref{FIG:ANISO Reduction} is the anisotropic counterpart of the homogeneous~\Cref{FIG:Homo Reduction}. At $g=0.7$ and $\gamma_0=0.8$, we plot the reconstruction of the density for an unseen multi-Gaussian test source. Shown are the source function $f$, FOM density $U_F$, ROM density $U_R$, and the pointwise error $|U_R-U_F|$. Both the intrusive coupled-Schur ROM (Route~A, top row) and the data-driven operator inference ROM (Route~B, bottom row) achieve relatively good accuracy (with $L^2$ error $\sim 10^{-2}$ at $r=25$).

\begin{figure}[!htbp]
  \centering
  \includegraphics[width=0.8\linewidth]{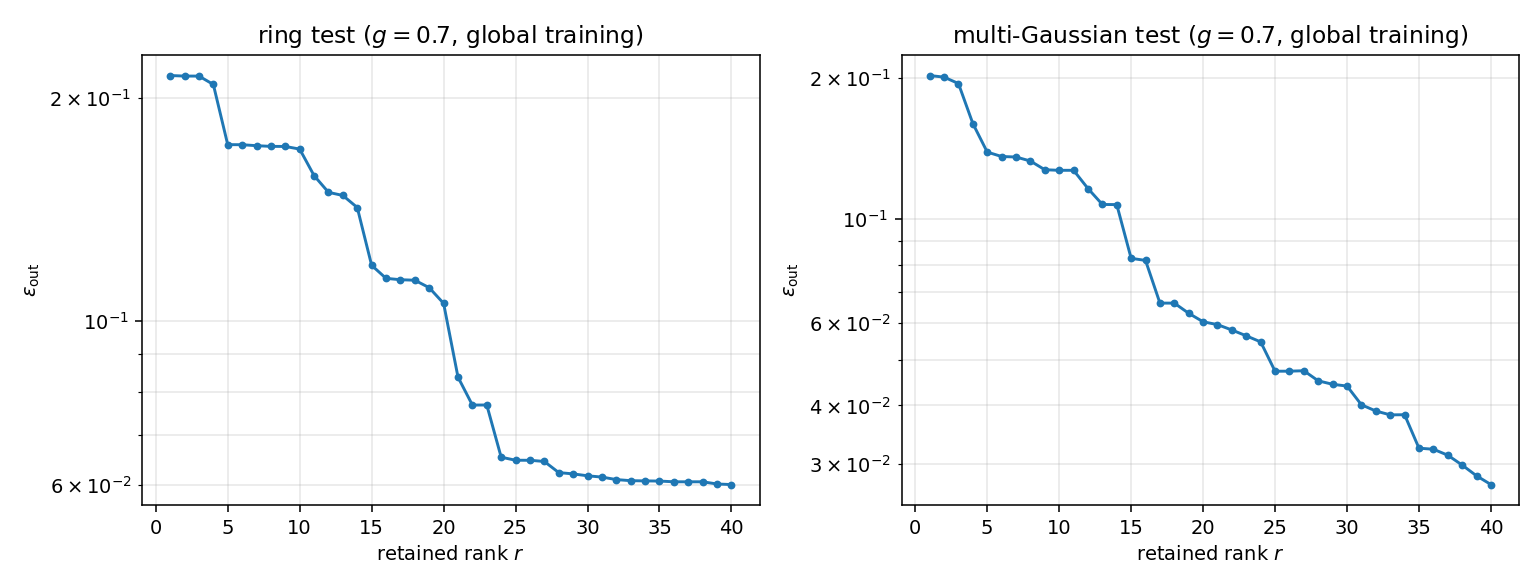}
  \caption{Out-of-sample error of the data-driven ROM (Route~B) vs.\ retained rank for global random-Fourier training, on a ring test (left) and a multi-Gaussian test (right).}
  \label{FIG:ANISO Error-Rank}
\end{figure}
\Cref{FIG:ANISO Error-Rank} is the anisotropic counterpart of the homogeneous
\Cref{FIG:Error-vs-Rank}. For fixed anisotropy $g=0.7$ and $\gamma_0=0.8$, we plot the out-of-sample testing error of the data-driven density ROM (Route~B, operator inference) as a function of retained rank. The training is performed using global random Fourier sources. As in the isotropic case, the error decreases slowly, again indicating that globally trained bases generalize poorly to localized test sources.

\begin{figure}[!htbp]
  \centering
  \includegraphics[width=0.55\linewidth]{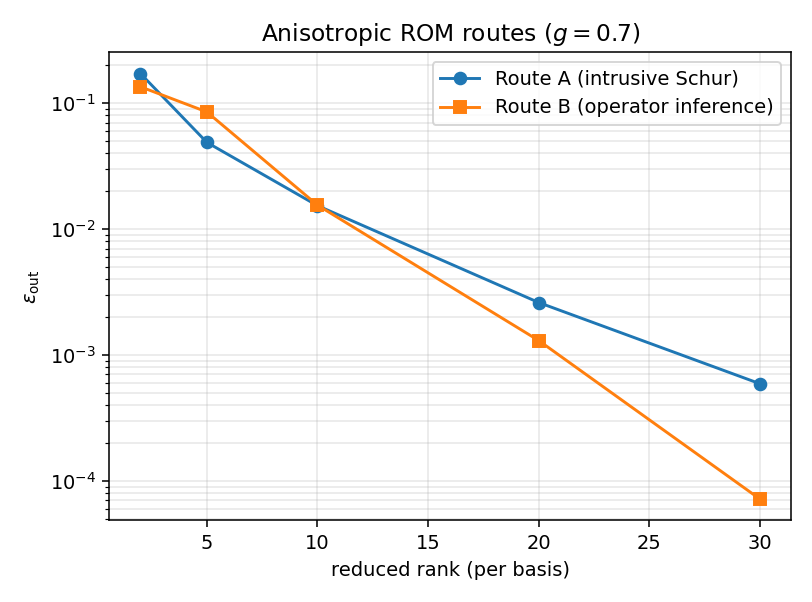}
  \caption{Out-of-sample error vs. reduced rank for the two anisotropic ROM routes at $g=0.7$.}
  \label{FIG:ANISO Routes}
\end{figure}
To compare the two anisotropic reductions of~\Cref{SUBSEC:ANISO ROM}, we show their error vs rank results in~\Cref{FIG:ANISO Routes}  at $g=0.7$. Route~A (intrusive joint reduction of the coupled Schur system with $M_a=16$) needs $r=10$ density modes and $r_\Phi=13$ fluctuation modes at $\eta=0.99$ and its out-of-sample error decays from $1.7\times10^{-1}$ at $r=2$ to $5.9\times10^{-4}$ at $r=30$. Route~B (non-intrusive operator inference from density snapshots with $M_a=24$) decays from $1.4\times10^{-1}$ to $7.2\times10^{-5}$. The adaptive Tikhonov regularization keeps the inferred reduced operator a contraction ($\|\widehat\bM\|_2<1$, the hypothesis of Proposition~\ref{PROP:Operator Inf}) at every rank. The two are comparable at low rank. At high rank, the data-driven Route~B is the most accurate here because it learns directly from the reference solver (no frozen-angle consistency error, no moment truncation), while Route A carries a certified error bound. This trade-off is exactly the one anticipated in~\Cref{SUBSEC:ANISO ROM}.

\subsection{Computational cost and speed-up } 

We quantify the speed-up of the ROMs. We focus on M2, the isotropic scattering ROM with Galerkin projection. We run simulations for a homogeneous medium in the intermediate regime of $\gamma_0=0.8$ ($\sigma_a=0.1$ with a constant $\sigma_s$). The basis is built offline from $N_s=400$ local Gaussian training sources~\eqref{EQ:Source Gaussian} of variance $\varsigma^2=0.05$, with rank $r=8$ selected by the energy criterion~\eqref{EQ:Energy} at $\eta=0.99$ (the same regime as~\Cref{TAB:Sigmas}). The full-order baseline $t_{\rm FOM}$ is a dense LU factor-and-solve of $(\bI-\bK)U=\phi$ (an independent factorization and solve per query with cost  $\cO(N_{\rm dof}^3)$), while the reduced query $t_{\rm ROM}$ forms the reduced load $\Rb^\top\boldsymbol\phi$ and solves the $r\times r$ system $(\bI_r-\widehat\bK)\bc=\Rb^\top\boldsymbol\phi$ with cost $\cO(r^3)$. We therefore expect that the gain grows like $N_{\rm dof}^3/r^3$ with problem size.
\begin{table}[!htbp]
\centering
\begin{tabular}{ccccccc}
\toprule
$N_x{=}N_y$ & $N_{\rm dof}$ & $r$ & $t_{\rm FOM}$ (ms) & $t_{\rm ROM}$ (ms)
    & speed-up & $\eps_{\rm out}$ \\
\midrule
33 & 1089 & 8 & 20.8    & 0.012 & $1.8\times10^{3}$ & 2.38e-02 \\
49 & 2401 & 8 & 201.0   & 0.060 & $3.4\times10^{3}$ & 2.38e-02 \\
65 & 4225 & 8 & 2794.3  & 0.121 & $2.3\times10^{4}$ & 2.38e-02 \\
97 & 9409 & 8 & 33610.7 & 0.264 & $1.3\times10^{5}$ & 2.38e-02 \\
\bottomrule
\end{tabular}
\caption{Scaling of per-query cost measured in wall time.}
\label{TAB:Time}
\end{table}

\Cref{TAB:Time} reports the per-query wall times across a grid ladder. All wall-clock times are the best of three repeats on a single machine. The out-of-sample accuracy $\eps_{\rm out}$ quoted is the relative $L^2$ error
on a single held-out Gaussian test source. The reduced solve is $1.8\times10^{3}$ times faster already at $N_{\rm dof}=1089$ and over $10^{5}$ times faster at $N_{\rm dof}=9409$. The one-time offline cost for $N_s=400$ shared-factorization snapshot solves, one SVD, and the projection $\widehat\bK=\Rb^\top\bK\Rb$ ranges from $0.3$\,s at $N_x=N_y=33$ to $25$\,s at $N_x=N_y=97$. The details are documented in~\Cref{TAB:Offline}.
\begin{table}[!htbp]
\centering
\begin{tabular}{l c c}
\toprule
$N_x{=}N_y$ ($N_{\rm dof}$) & total offline (s) & dominant stage \\
\midrule
33 (1089)  & 0.3  & $N_s$ snapshot solves \\
49 (2401)  & 0.9  & $N_s$ snapshot solves \\
65 (4225)  & 2.7  & assembly + SVD \\
97 (9409)  & 25.2 & dense assembly + factorization \\
\bottomrule
\end{tabular}
\caption{Summary of one-time offline cost measured in wall time.}
\label{TAB:Offline}
\end{table}

\section{Concluding remarks}
\label{SEC:Concl}

We developed a reduced-order modeling framework for the radiative transfer equation built on the integral formulation of the angularly averaged density. The averaging step provides smoothness and a genuinely low-dimensional solution manifold for the density, even outside the diffusive regime, while the integral formulation yields an exact, low-rank-friendly governing equation in physical space. On this foundation, we built projection-based ROMs with a contraction-based stability guarantee and a C\'ea-type error bound tied to the snapshot singular values, and for the anisotropic case, an intrusive joint reduction of the coupled system and a non-intrusive data-driven variant for black-box transport solves. Two-dimensional experiments confirm low-rank reducibility and substantial speed-ups across scattering regimes.

Many interesting directions for future investigation exist. One example is to generalize the current framework to the time-dependent RTE and multigroup RTEs. Time-dependent problems can probably be handled with the same type of method using either Fourier transforms or the integral formulation of~\cite{ZhZh-CSIAM20}. Fourier transform in time reduces the time-dependent RTE to our stationary form~\eqref{EQ:RTE} with a complex total attenuation coefficient. This does not seem to create extra obstacles for the ROM framework we developed to work. From an application point of view, generalizing the current framework to inverse transport problems with certified, residual-driven basis enrichment is of great interest.

\section*{Acknowledgments}

This work is partially supported by the National Science Foundation through grants DMS-2309802 (KR) and DMS-2529292 (ST). The work of KR is also partially supported by the Office of
Naval Research (ONR) award N000142612023.

{\small
\bibliography{BIB-ROM-ERT}
\bibliographystyle{siam}
}

\end{document}